\theoremstyle{definition}
\newtheorem{definition}{Definition}[section]
\theoremstyle{plain}
\newtheorem{lemma}[definition]{Lemma}
\newtheorem{theorem}[definition]{Theorem}
\newtheorem{proposition}[definition]{Proposition}
\newtheorem{corollary}[definition]{Corollary}
\theoremstyle{remark}
\newtheorem{example}[definition]{Example}
\newcommand{\mycl}{\operatorname{\mbox{\rm cl}}}
\newcommand{\mylcp}{\operatorname{\mbox{\bf p.an}}}
\newcommand{\myval}{\operatorname{\mbox{\bf val}}}
\begin{document}
\title[Quasi-quadratic modules in pseudo-valuation domain]{Quasi-quadratic modules in pseudo-valuation domain}
\author[M. Fujita]{Masato Fujita}
\address{Department of Liberal Arts,
Japan Coast Guard Academy,
5-1 Wakaba-cho, Kure, Hiroshima 737-8512, Japan}
\email{fujita.masato.p34@kyoto-u.jp}

\author[M. Kageyama]{Masaru Kageyama}
\address{Department of Architectural Engineering,
Faculty of Engineering,
Hiroshima Institute of Technology,
2-1-1 Miyake, Saeki-ku, Hiroshima 731-5193, Japan}
\email{m.kageyama.d4@cc.it-hiroshima.ac.jp}

\begin{abstract}
We study quasi-quadratic modules in a pseudo-valuation domain $A$ whose strict units admit a square root.
	Let $\mathfrak X_R^N$ denote the set of quasi-quadratic modules in an $R$-module $N$, where $R$ is a commutative ring.
	It is known that there exists a unique overring $B$ of $A$ such that $B$ is a valuation ring with the valuation group $(G,\leq)$ and the maximal ideal of $B$ coincides with that of $A$.
	Let $F$ be the residue field of $B$.
	In the above setting, we found a one-to-one correspondence 
	between $\mathfrak X_A^A$ and a subset of $\prod_{g \in G,g \geq e} \mathfrak X_{F_0}^F$.
\end{abstract}

\subjclass[2020]{Primary 13J30; Secondary 12J10}

\keywords{quasi-quadratic module; pseudo-valuation domain}

\maketitle

\section{Introduction}\label{sec:intro}
Quadratic modules in the ring of univariate formal power series $E[\![X]\!]$ in the indeterminate $X$ 
were completely classified in \cite{AK} when $E$ is a euclidean field.
This result is due to simple form of elements in the ring $E[\![X]\!]$ and due to the fact that a quadratic module is finitely generated.
In \cite{FK}, the authors considerably generalized it to the case in which the ring is an overring of a valuation ring whose strict units admit a square root.
They first introduce the notion of quasi-quadratic modules which is a slight generalization of quadratic modules, and they gave a complete classification of quasi-quadratic modules of the ring under the assumption that the quasi-quadratic modules in the residue class field is already given.
A pseudo-angular component map was the key tool used in the classification.

This paper tackles the same problem, that is, classification of quasi-quadratic modules, but we consider pseudo-valuation domains defined in \cite{He} rather than overrings of a valuation ring. Let us recall the definition of a pseudo-valuation domain.

\begin{definition}\label{def:pseudo-val-domain}
	Let $A$ be a domain with the quotient field $Q(A)=K$.
	A prime ideal $\mathfrak{p}$ of $A$ is called {\it strongly prime} if $x,y\in K$ and $xy\in\mathfrak{p}$ imply that
	$x\in \mathfrak{p}$ or $y\in \mathfrak{p}$. A domain $A$ is called a {\it pseudo-valuation domain} if every prime ideal 
	of $A$ is strongly prime.
\end{definition}

It immediately follows from the definition that 
every valuation domain is a pseudo-valuation domain and
a pseudo-valuation domain is a local ring. See \cite[Proposition 1.1 and Corollary 1.3]{He}.
We fix a pseudo-valuation domain $A$ throughout the paper.
We use the following notations in this paper.
\medskip

\begin{center}
	\begin{tabular}{|c|l|}
		\hline
		Notation & Description\\
		\hline
		\hline
		$\mathfrak m$ & the maximal ideal of $A$.\\
		\hline
		$F_0$ & the residue field of $A$, i.e. $A/\mathfrak m$.\\
		\hline
		$\pi_0$ & the residue map $A \to F_0$.\\
		\hline
		$K$ & the quotient field of the domain $A$.\\
		\hline
	\end{tabular}
\end{center}
\medskip

We give a fundamental example of a pseudo-valuation domain which is not a valuation domain.

\begin{example}\label{fund-ex}
	Let us consider the local domain 
	$$A=\left\{\sum_{i=0}^\infty a_ix^i \in \mathbb C[\![X]\!] \;\middle\vert\;  a_0 \in \mathbb R\right\}.$$
	with the maximal ideal $\mathfrak{m}=X\mathbb C[\![X]\!]$. Set $B=\mathbb C[\![X]\!]$.
	The local domain $A$ is not a valuation domain of the quotient field $K=\mathbb{C}(\!(X)\!)$ but 
	a pseudo-valuation domain. In fact, $A$ is not a valuation domain of $K$ because
	$\sqrt{-1}\not\in A$ and $(\sqrt{-1})^{-1}\not\in A$.
	The remaining task is to show that every prime ideal of $A$ is strongly prime.
	By using \cite[Theorem 1.4]{He}, we have only to show that $\mathfrak{m}$ is strongly prime.
	Take elements $x, y\in K$ with $xy\in \mathfrak{m}$. 
	Note that $B$ is a valuation domain of $K$ with the maximal ideal $\mathfrak{m}$.
	Since a valuation domain becomes a pseudo-valuation domain, $B$ is a pseudo-valuation domain of $K$.
	We have $x\in\mathfrak{m}$ or $y\in\mathfrak{m}$ because $\mathfrak{m}$ is strongly prime as an ideal of $B$.
	Thus it follows that $\mathfrak{m}$ is strongly prime as an ideal of $A$.
\end{example}

We employ a technical assumption which is similar to the assumption employed in \cite{FK}. 
\begin{definition}\label{def:strict_unitA}
	An element $x$ in $A$ is a \textit{strict unit} if $\pi_0(x)=1$.
	The element $x$ \textit{admits a square root} if there exists $u \in A$ such that $x=u^2$.
\end{definition}
We assume that any strict units in $A$ admit a square root.

It is known that there exists a unique overring $B$ of $A$ which is a valuation ring in $K$ such that
the maximal ideal of $B$ coincides with $\mathfrak m$ \cite[Theorem 2.7]{He}.
We also use the following notations:
\medskip

\begin{center}
	\begin{tabular}{|c|l|}
				\hline
				Notation & Description\\
				\hline
				\hline
				$(K,\myval)$ & the valued field whose \\
				&valuation ring is $B$.\\
				\hline
				$(G,<)$ & the valuation group of $(K,\myval)$\\
				& with the identity element $e$.\\
				\hline
				$F$ & the residue class field of $(K,\myval)$.\\
				\hline
				$\pi$ & the residue class map $B \to F$\\
				& of $(K,\myval)$.\\
				\hline
				$\overline{g}$ & the equivalence class of $g \in G$\\
				& in $G/G^2$, where $G^2$ denotes\\
				& the set of squares in $G$.\\
				\hline
			\end{tabular}
\end{center}
\medskip

%For simplicity, we write $h = \overline{g}$ when the equivalence class of $h \in G$ in $G/G^2$ coincides with an element $\overline{g} \in G/G^2$.
%These notations are also used in \cite{FK}.
It is easy to demonstrate that $F_0$ is a subfield of $F$ and $\pi_0$ is the restriction of $\pi$ to $A$.
Hence we have the following commutative diagram
$$
\xymatrix@M=8pt{
	A \ar[r]^{\pi_0}  \ar@{^{(}->}[d] & F_0=A/\mathfrak{m} \ar@{^{(}->}[d] \\
	B \ar[r]_{\pi}  & F=B/\mathfrak{m} 
	\ar@{}[lu]|{\circlearrowright}
}
$$
with the natural ring homomorphisms.

We also have to slightly generalize the notion of quasi-quadratic modules.
In \cite{FK}, a quasi-quadratic module is a subset of the given ring.
We redefine a quasi-quadratic module so that it is a subset of an $A$-module.  
\begin{definition}
	Let $R$ be a commutative ring and $N$ be an $R$-module.
	A subset $M$ of $N$ is a \textit{quasi-quadratic $R$-module} in $N$ if $M+M \subseteq M$ and $a^2 M \subseteq M$ for all $a \in R$.
	Note that we always have $0 \in M$.
	A quasi-quadratic $R$-module is simply called a quasi-quadratic module when the ring $R$ is clear from the context.
	Let $\mathfrak X_R^N$ denote the set of quasi-quadratic $R$-modules in $N$.
	We simply write it by $\mathfrak X_R$ when $N=R$.
\end{definition}

We are now ready to describe the purpose of our paper.
Our goal is to construct a one-to-one correspondence between $\mathfrak X_A$ and a subset of $\prod_{g \in G, g \geq e} \mathfrak X_{F_0}^F$.
The pseudo-angular component map $\mylcp:K^\times \rightarrow F^\times$ defined in \cite{FK} is also a very useful tool in this study.
In Section \ref{sec:previous}, we recall the results in the previous study \cite{FK} including the pseudo-angular component map.
Several basic lemmas which follow immediately from them are also discussed in this section.
Section \ref{sec:qqmodule} is the main part of this paper.
We prove the above one-to-one correspondence and several properties of $\mathfrak X_A$ in this section.
As a special case, we study $\mathfrak X_A$ for the pseudo-valuation domains $A$
introduced in Example \ref{fund-ex} in Section \ref{sec:example}.
In the above sections, we assume that $F_0$ is not of characteristic two.
We consider the case in which $F_0$ is of characteristic two in Appendix \ref{sec:char2}.

Finally, we summarize notations used in this paper.
For a commutative ring $R$, let $R^\times$ denotes the set of units.
For any subset $U$ of $R$ and any element $x \in R$, we mean that  $x \in U$ and $-x \in U$ by the notation $\pm x \in U$.  
When $(S,<)$ be a linearly ordered set, for an element $c$ in $S$, the notation $S_{\geq c}$ denotes the set of elements in $S$ not smaller than $c$.

\section{Review of the previous results}\label{sec:previous}
%Recall that an element $x \in K$ is a unit in $B$ if and only if $\myval(x)=e$.
%A unit $x$ in $B$ is called a \textit{strict unit} if $\pi(x)=1$.
%We say that an element $x$ in $K$ \textit{admits a square root} if there exists $y \in K$ with $x=y^2$. 

We defined a pseudo-angular component map $\mylcp:K^{\times} \rightarrow F^{\times}$ and demonstrated the existence of a pseudo-angular component map when strict units in $B$ always admit a square root in \cite[Proposition 2.14]{FK}.
Here is the definition of a pseudo-angular component map.

\begin{definition}\label{def:pseudo}
	A \textit{pseudo-angular component map} is a map $\mylcp:K^{\times} \rightarrow F^{\times}$ satisfying the following conditions:
	\begin{enumerate}
		\item[(1)] We have $\mylcp(u)=\pi(u)$ for any $u \in B^{\times}$;
		\item[(2)] The equality $\mylcp(ux)=\pi(u) \cdot \mylcp(x)$ holds true for all $u \in B^{\times}$ and $x \in K^{\times}$;
		\item[(3)] For any $g \in G$ and nonzero $c \in F$, there exists an element $w \in K$ with $\myval(w)=g$ and $\mylcp(w)=c$;
		\item[(4)] For any nonzero elements $x_1,x_2 \in K$ with $x_1+x_2 \not=0$, we have
		\begin{itemize}
			\item $\mylcp(x_1+x_2)=\mylcp(x_1)$ if $\myval(x_1) < \myval(x_2)$;
			\item $\myval(x_1+x_2)=\myval(x_1)$ and $\mylcp(x_1+x_2)=\mylcp(x_1)+\mylcp(x_2)$ if $\myval(x_1) = \myval(x_2)$ and $\mylcp(x_1)+\mylcp(x_2) \not=0$;
		\end{itemize}
		\item[(5)] For any $x,y \in K^{\times}$ with $\overline{\myval(x)}=\overline{\myval(y)}$ and $\mylcp(x)=\mylcp(y)$, there exists $u \in K^{\times}$ such that $y=u^2x$;
		\item[(6)] For any $a,u \in K^{\times}$, there exists $k \in F^{\times}$ such that $\mylcp(au^2)=\mylcp(a)k^2$.
	\end{enumerate}
\end{definition}

We give several lemmas necessary in this paper.

\begin{lemma}\label{lem:lcp_basic1}
	Let $\mylcp:K^{\times} \rightarrow F^{\times}$ be a pseudo-angular component map.
	For any $a \in K^{\times}$, $g \in G$ and $k \in F^{\times}$, there exists $u \in K^{\times}$ such that $\myval(u)=g$ and $\mylcp(au^2)=\mylcp(a)k^2$.
\end{lemma}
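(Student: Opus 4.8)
\emph{Proof proposal.} The plan is to produce $u$ as a unit multiple of an element furnished by condition (3) of Definition \ref{def:pseudo}, and then to repair the resulting square factor by means of condition (2).

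First I would invoke condition (3) to obtain $w \in K$ with $\myval(w)=g$ and $\mylcp(w)=k$; in particular $w \in K^{\times}$. Applying condition (6) to the pair $a$ and $w$, there is some $k' \in F^{\times}$ with $\mylcp(aw^2)=\mylcp(a)(k')^2$. This is close to what we want, but the square factor produced is $(k')^2$ rather than $k^2$, and condition (6) gives no control over $k'$.

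To fix this, recall that the residue map $\pi\colon B^{\times}\to F^{\times}$ is surjective, since $B$ is a local valuation ring with residue field $F$. Choose $v \in B^{\times}$ with $\pi(v)=k(k')^{-1}$ and set $u=vw$. Because $v \in B^{\times}$ we have $\myval(v)=e$, hence $\myval(u)=\myval(v)+\myval(w)=g$. Writing $au^2=v^2\cdot(aw^2)$ with $v^2 \in B^{\times}$, condition (2) gives
$$\mylcp(au^2)=\pi(v^2)\,\mylcp(aw^2)=\pi(v)^2\mylcp(a)(k')^2=(k(k')^{-1})^2\mylcp(a)(k')^2=\mylcp(a)k^2,$$
which is the desired identity.

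The only point needing a moment's thought — and what I would flag as the main obstacle — is that condition (6) by itself is too weak: it yields merely \emph{some} square correction, so $u=w$ need not work. Introducing the unit $v$ and exploiting surjectivity of $\pi$ onto $F^{\times}$ is precisely the device that absorbs the discrepancy between $k'$ and $k$; after that, no genuine difficulty remains.
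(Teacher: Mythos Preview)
Your proof is correct and follows essentially the same approach as the paper: pick an element of valuation $g$, apply condition (6) to get an uncontrolled square factor, then correct it by multiplying by a suitable unit of $B$ via condition (2). The only cosmetic difference is that you obtain the initial element $w$ through condition (3) (specifying $\mylcp(w)=k$, which you never actually use), whereas the paper simply invokes surjectivity of $\myval$ to get an element $u_0$ with $\myval(u_0)=g$.
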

\begin{proof}
	Take an element $u_0 \in K^{\times}$ such that $\myval(u_0)=g$.
	It always exists because the valuation is surjective.
	There exists $k_0 \in F^\times$ such that $\mylcp(au_0^2)=\mylcp(a)k_0^2$ by Definition \ref{def:pseudo}(6).
	Take an element $v \in B$ with $\pi(v)=k/k_0$ and set $u=u_0v$.
	It is obvious that $\myval(v)=e$ and $\myval(u)=g$.
	By Definition \ref{def:pseudo}(2), we have $\mylcp(au^2)=\mylcp(au_0^2v^2)=\mylcp(au_0^2)\pi(v)^2=\mylcp(a) \cdot k_0^2 \cdot (k/k_0)^2=\mylcp(a)k^2$.
\end{proof}

\begin{lemma}\label{lem:lcp_basic2}
	Let $\mylcp:K^{\times} \rightarrow F^{\times}$ be a pseudo-angular component map.
	Let $x,y \in K^{\times}$.
	If $\myval(x)=\myval(y)$ and $\mylcp(x)+\mylcp(y)=0$, then $\myval(x+y)>\myval(x)$. 
\end{lemma}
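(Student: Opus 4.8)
The plan is to argue by contradiction. If $x+y=0$ the claim holds trivially (with the usual convention $\myval(0)=\infty$), so I would assume $z:=x+y\neq 0$ and set $g:=\myval(x)=\myval(y)$. The ultrametric inequality gives $\myval(z)\geq g$, and the task is to rule out $\myval(z)=g$. Suppose then that $\myval(z)=g$; in particular $\mylcp(z)\in F^{\times}$ is well defined.

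The key auxiliary fact is that the pseudo-angular component map is compatible with sign change: since $-1\in B^{\times}$ and $\pi(-1)=-1$ (note that $F$ is not of characteristic two, because $F_0$ is a subfield of $F$), Definition \ref{def:pseudo}(2) yields $\mylcp(-a)=-\mylcp(a)$ for every $a\in K^{\times}$. Next I would feed $x$ and $y$ back into Definition \ref{def:pseudo}(4) through the two decompositions $x=z+(-y)$ and $y=z+(-x)$, in each of which all three terms have valuation $g$. Applying Definition \ref{def:pseudo}(4) to $x=z+(-y)$: if $\mylcp(z)+\mylcp(-y)\neq 0$, then $\mylcp(x)=\mylcp(z)+\mylcp(-y)=\mylcp(z)-\mylcp(y)$, hence $\mylcp(z)=\mylcp(x)+\mylcp(y)=0$ by hypothesis, contradicting $\mylcp(z)\in F^{\times}$; therefore $\mylcp(z)+\mylcp(-y)=0$, i.e. $\mylcp(z)=\mylcp(y)$. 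The same reasoning applied to $y=z+(-x)$ forces $\mylcp(z)=\mylcp(x)$. Thus $\mylcp(x)=\mylcp(z)=\mylcp(y)$, and combined with the hypothesis $\mylcp(x)+\mylcp(y)=0$ this gives $2\mylcp(x)=0$; since $F$ is not of characteristic two and $\mylcp(x)\neq 0$, this is a contradiction. Hence $\myval(x+y)>g=\myval(x)$.

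I do not anticipate a genuine obstacle. The one subtlety to watch is that Definition \ref{def:pseudo}(4) only describes $\mylcp(x_1+x_2)$ when $\mylcp(x_1)+\mylcp(x_2)\neq 0$, so one cannot compute $\mylcp(x+y)$ directly from $x$ and $y$. The device of rewriting $x$ and $y$ themselves as sums involving $z=x+y$ is exactly what converts the ``degenerate'' vanishing case into the usable equalities $\mylcp(z)=\mylcp(x)=\mylcp(y)$, and the assumption that the residue field is not of characteristic two is precisely what closes the argument.
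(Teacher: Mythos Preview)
Your argument is correct. The paper does not give a self-contained proof at all; it simply records that this is \cite[Corollary~2.16]{FK}, so your write-up supplies what the paper omits.

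One remark on scope: you explicitly invoke that $F$ has characteristic $\neq 2$ in the final step $2\,\mylcp(x)=0\Rightarrow\mylcp(x)=0$. That is consistent with the standing hypothesis announced at the end of Section~\ref{sec:intro} for Sections~\ref{sec:previous}--\ref{sec:example}. Be aware, however, that the paper also cites this lemma in Appendix~\ref{sec:char2} (e.g.\ in Lemma~\ref{lem:char_two_qq} and Proposition~\ref{prop:char_two2}), where the residue characteristic is two; in that regime your contradiction does not close, and one must fall back on the cited result from \cite{FK} (whose proof uses condition~(5) of Definition~\ref{def:pseudo} rather than a pure characteristic argument). So your proof is a valid replacement for the citation within the body of the paper, but not a drop-in replacement for its use in the appendix.
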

\begin{proof}
	It is \cite[Corollary 2.16]{FK}.
\end{proof}

\section{Quasi-quadratic modules in a pseudo-valuation domain}\label{sec:qqmodule}

We come back to the study of the pseudo-valuation domain $A$.
We use the notations given in Section \ref{sec:intro} without notice.

\begin{lemma}\label{lem:basic}
	Let $A$ be a pseudo-valuation domain.
	Let $x \in B$.
	If $\pi(x) \in F_0$, then $x \in A$.
	If, in addition, $\pi(x) \neq 0$, $x \in A^\times$.
\end{lemma}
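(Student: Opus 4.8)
The plan is to use the displayed commutative diagram from Section \ref{sec:intro} together with the fact, recalled there from \cite[Theorem 2.7]{He}, that the single symbol $\mathfrak m$ denotes simultaneously the maximal ideal of $A$ and the maximal ideal of $B$; in particular $\mathfrak m \subseteq A$ and $\mathfrak m = \ker \pi$.

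First I would prove $x \in A$. Since $\pi_0 : A \to F_0$ is surjective and, by the observation in Section \ref{sec:intro}, equals the restriction of $\pi$ to $A$, we may choose $a \in A$ with $\pi(a) = \pi_0(a) = \pi(x)$. Then $\pi(x - a) = 0$, so $x - a \in \ker \pi = \mathfrak m \subseteq A$. Hence $x = a + (x - a) \in A$, which settles the first assertion.

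For the second assertion, suppose in addition $\pi(x) \neq 0$. Then $x \notin \ker \pi = \mathfrak m$. Combining this with $x \in A$ (just proved) and the fact that $A$ is a local ring with maximal ideal $\mathfrak m$, we get $x \in A \setminus \mathfrak m = A^\times$, as desired. I do not expect any genuine obstacle here: the only points requiring care are the identification of the two maximal ideals and the compatibility $\pi|_A = \pi_0$, both of which are available from the setup in Section \ref{sec:intro}; neither the square-root hypothesis nor the pseudo-angular component map is needed for this lemma.
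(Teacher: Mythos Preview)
Your proof is correct and uses essentially the same approach as the paper: lift $\pi(x)$ to an element of $A$ via the surjectivity of $\pi_0$, and use that $\mathfrak m \subseteq A$ to conclude. The only cosmetic difference is that you work additively (showing $x-a\in\mathfrak m$), whereas the paper works multiplicatively (showing $x/y-1\in\mathfrak m$) after first disposing of the case $\pi(x)=0$; your version avoids that case split and is slightly cleaner.
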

\begin{proof}
	When $\pi(x)=0$, then $x \in \mathfrak m \subset A$.
	When $\pi(x) \neq 0$, there exists $y \in A$ with $\pi_0(y)=\pi(x)$ by the definition of $F_0$.
	Since $\pi_0(y) \neq 0$, we have $y \in A^{\times}$.
	In particular, we get $1/y \in A^\times$.
	Set $z=x/y-1$.
	We have $\pi(z)=\pi(x/y)-1=\pi(x)/\pi(y)-1=\pi(x)/\pi_0(y)-1=0$.
	It implies $z \in \mathfrak m \subset A$.
	Therefore, we get $x = (x/y) \cdot y =(1+z)y \in A$. 
\end{proof}

\begin{corollary}\label{cor:prepseudo}
	Let $A$ be a pseudo-valuation domain.
	Any strict units in $B$ admit a square root if and only if any strict units in $A$ admit a square root.
\end{corollary}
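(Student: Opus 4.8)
The plan is to reduce the statement to Lemma~\ref{lem:basic}, after first noting that the notion of ``strict unit'' is really the same whether one works in $A$ or in $B$. Concretely, I would begin by proving that the strict units of $A$ and the strict units of $B$ form exactly the same set. One inclusion is immediate: since $\pi_0$ is the restriction of $\pi$ to $A$, any $x\in A$ with $\pi_0(x)=1$ satisfies $\pi(x)=1$, so it is a strict unit of $B$. For the reverse inclusion, if $x\in B$ satisfies $\pi(x)=1$, then $\pi(x)=1\in F_0$, so Lemma~\ref{lem:basic} gives $x\in A$ (in fact $x\in A^\times$), and then $\pi_0(x)=\pi(x)=1$, so $x$ is a strict unit of $A$.

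With this identification in hand, the ``only if'' and ``if'' directions are both short. If every strict unit of $A$ admits a square root in $A$ and $x$ is a strict unit of $B$, then by the previous step $x$ is a strict unit of $A$, so $x=u^2$ for some $u\in A\subseteq B$, giving a square root of $x$ in $B$. Conversely, if every strict unit of $B$ admits a square root in $B$ and $x$ is a strict unit of $A$, then $x$ is also a strict unit of $B$, so $x=u^2$ for some $u\in B$; now $\pi(u)^2=\pi(u^2)=\pi(x)=1$ in the field $F$ forces $\pi(u)=\pm 1\in F_0$, and Lemma~\ref{lem:basic} applied once more yields $u\in A$, so $x$ admits a square root in $A$.

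I do not expect a genuine obstacle here: the entire argument rests on Lemma~\ref{lem:basic}, and the only mildly delicate point is handling the sign in $\pi(u)=\pm 1$, which causes no trouble because $-1\in F_0$. In particular, note that this corollary needs no hypothesis on the characteristic of $F_0$.
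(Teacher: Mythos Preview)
Your proposal is correct and follows essentially the same approach as the paper: both arguments rest entirely on Lemma~\ref{lem:basic}, using that a strict unit of $B$ has $\pi$-image $1\in F_0$ and that any square root of it has $\pi$-image $\pm 1\in F_0$. The paper's proof is simply a one-sentence compression of what you spell out in full.
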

\begin{proof}
	Strict units in $B$ and their square roots belong to $A$ by Lemma \ref{lem:basic} because their images under $\pi$ is $\pm 1$ and they belong to $F_0$. 
\end{proof}

There exists a pseudo-angular component map $\mylcp:K^\times \rightarrow F^\times$ when any strict units in the valuation ring $B$ admit a square root by \cite[Proposition 2.14]{FK}.
Therefore, by the above corollary, if any strict units in the pseudo-valuation domain $A$ admit a square root, a pseudo-angular component map $\mylcp:K^\times \rightarrow F^\times$ exists.
We assume that any strict units in $A$ admit a square root,  and we fix a pseudo-angular component map $\mylcp$ throughout this section.

Let $S$ be a subset of a quasi-quadratic $A$-module $M$.
The smallest quasi-quadratic $A$-submodule of $M$ containing the set $S$ is called the \textit{quasi-quadratic closure} of $S$ in $M$ and denoted by $\mycl_A(S;M)$.
We have 
\begin{align*}
	&\mycl_A(S;\!M)=\Bigl\{\sum_{i=1}^n a_i^2s_i \Bigm\vert n \in \mathbb Z_{> 0}, a_i \in A,  s_i \in S\ (1 \leq \forall i \leq n)\Bigr\}.
\end{align*}
It is denoted by $\mycl_A(S)$ when $M=A$. 
The subscript $A$ is omitted when it is clear from the context.

We construct a one-to-one correspondence between $\mathfrak X_A$ and a subset of 
$\prod_{g \in G, g \geq e} \mathfrak X_{F_0}^F$ step by step.
We first define important notions.
\begin{definition}\label{def:psi}
	For any $g \in G_{\geq e}$ and a subset $\mathcal M$ of $A$,
	we set
\begin{align*}
			M_g(\mathcal M)=
\{\mylcp(x) \in F^\times\mid x \in \mathcal M \setminus \{0\}, \myval(x)=g\} \cup \{0\}.
\end{align*} 
	For $g \in G_{\geq e}$ and a subset $M$ of $F$, we put
	\begin{align*}
			\Psi_1(M,g)&=\{x \in A \setminus \{0\}\mid \overline{\myval(x)} = \overline{g} \text{ and }
			((\myval(x)=g \text{ and }\mylcp(x) \in M )\\
			&\quad \text{ or } (\myval(x)>g \text{ and } \mylcp(x) \in \mycl_F(M)))\},\\
			\Psi_2(M,g)&=\{x \in A \setminus \{0\}\mid \exists u \in \Psi_1(M,g) \text{ such that }\\
			& \qquad - u \in \Psi_1(M,g) \text{ and }\myval(u)<\myval(x)\} \text{ and }\\
			\Psi(M,g)&=\Psi_1(M,g) \cup \Psi_2(M,g)  \cup \{0\}.
		\end{align*}
\end{definition}

We want to show that both $M_g(\mathcal M)$ and $\Psi(M,g)$ are quasi-quadratic modules.
To this end, we prepare several lemmas.

\begin{lemma}\label{lem:basic2}
	Let $A$ be a pseudo-valuation domain such that any strict units admit a square root.
	Let $\mathcal M$ be a quasi-quadratic $A$-module in $A$.
	For any $g,h \in G$ with $g \geq e$ and $h>e$, and $0 \neq c \in M_g(\mathcal M)$, the inclusions  $F_0^2c \subseteq M_g(\mathcal M)$ and $F^2c \subseteq M_{gh^2}(\mathcal M)$ hold true.
\end{lemma}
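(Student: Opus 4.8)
The plan is to establish each inclusion by exhibiting, for a prescribed scalar $k$, an explicit element of $\mathcal M$ whose valuation and pseudo-angular component are the required ones. To begin, I would fix $x \in \mathcal M \setminus \{0\}$ with $\myval(x) = g$ and $\mylcp(x) = c$, which exists by the definition of $M_g(\mathcal M)$ and the hypothesis $c \neq 0$. The case $k = 0$ is immediate in both assertions, since $0 \in M_g(\mathcal M)$ and $0 \in M_{gh^2}(\mathcal M)$ by definition, so from now on I may assume $k \neq 0$.

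For $F_0^2 c \subseteq M_g(\mathcal M)$, take $k \in F_0^\times$ and lift it to $a \in A$ with $\pi_0(a) = k$; since $\pi_0(a) \neq 0$, the element $a$ is a unit of $A$, hence of $B$, and in particular $\myval(a) = e$. Then $a^2 x \in \mathcal M$ because $\mathcal M$ is a quasi-quadratic $A$-module, it is nonzero as $A$ is a domain, $\myval(a^2 x) = \myval(a)^2 \myval(x) = g$, and $\mylcp(a^2 x) = \pi(a)^2 \mylcp(x) = k^2 c$ by Definition~\ref{def:pseudo}(2) together with $\pi|_A = \pi_0$. Thus $k^2 c \in M_g(\mathcal M)$.

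For $F^2 c \subseteq M_{gh^2}(\mathcal M)$, take $k \in F^\times$ and apply Lemma~\ref{lem:lcp_basic1} to the triple $(x, h, k)$ to obtain $u \in K^\times$ with $\myval(u) = h$ and $\mylcp(x u^2) = \mylcp(x) k^2 = c k^2$. Since $h > e$, we have $u \in \mathfrak m$, and $\mathfrak m \subseteq A$, so $u^2 x \in \mathcal M$; this element is nonzero, satisfies $\myval(u^2 x) = h^2 g = g h^2$, and $\mylcp(u^2 x) = c k^2$. Hence $c k^2 \in M_{gh^2}(\mathcal M)$.

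The computation is routine once the pseudo-angular component map is available; the only point that genuinely uses a hypothesis beyond the module axioms is the last one, namely that the auxiliary element $u$ produced by Lemma~\ref{lem:lcp_basic1} must lie in $A$ and not merely in $B$. This is precisely why $h$ is assumed to be strictly greater than $e$: it forces $\myval(u) = h > e$, hence $u \in \mathfrak m \subseteq A$. With $h = e$ this would fail in general, as the element $\sqrt{-1}$ in Example~\ref{fund-ex} shows.
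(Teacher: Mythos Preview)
Your proof is correct and follows essentially the same approach as the paper's: fix a witness $x$ for $c$, then multiply by the square of a suitable element of $A$ to adjust the pseudo-angular component, using Definition~\ref{def:pseudo}(2) for the first inclusion and Lemma~\ref{lem:lcp_basic1} for the second. The only cosmetic difference is that for the first inclusion you lift $k \in F_0^\times$ directly to $A$ via $\pi_0$, whereas the paper lifts to $B^\times$ via $\pi$ and then invokes Lemma~\ref{lem:basic} to land in $A^\times$; both are equally valid.
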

\begin{proof}
	Fix $x \in \mathcal M$ with $\myval(x)=g$ and $\mylcp(x)=c$.
	We first show the inclusion $F_0^2c \subseteq M_g(\mathcal M)$.
	It is obvious that $0 \in M_g(\mathcal M)$.
	Fix an arbitrary element $k \in F_0^{\times}$.
	Take $u \in B^{\times}$ such that $\pi(u)=k$.
	We have $u \in A^{\times}$ by Lemma \ref{lem:basic}.
	We get $u^2x \in \mathcal M$ and $\mylcp(u^2x)=k^2c$ by Definition \ref{def:pseudo}(2).
	It implies that $k^2c \in M_g(\mathcal M)$.
	The first inclusion has been demonstrated.
	
	We next demonstrate the inclusion $F^2c \subseteq M_{gh^2}(\mathcal M)$.
	It is obvious that $0 \in M_{gh^2}(\mathcal M)$.
	Fix an arbitrary element $k \in F^{\times}$.
	By Lemma \ref{lem:lcp_basic1}, there exists $u \in K^{\times}$ with $\myval(u)=h$ and $\mylcp(xu^2)=k^2c$.
	Since $h>e$, we have $u \in \mathfrak m \subset A$.
	Therefore, we have $xu^2 \in \mathcal M$.
	We obtain $k^2c \in M_{gh^2}(\mathcal M)$ because $\myval(xu^2)=gh^2$ and $\mylcp(xu^2)=k^2c$.
\end{proof}

\begin{lemma}\label{lem:basic3}
	Let $A$ be a pseudo-valuation domain such that $2$ is a unit and any strict units admit a square root.
	Let $\mathcal M$ be a quasi-quadratic $A$-module in $A$.
	Assume that there exists $0 \neq u \in A$ with $\pm u \in \mathcal M$.
	Then, any element $x \in A$ with $\myval(x)>\myval(u)$ belongs to the quasi-quadratic module $\mathcal M$. 
\end{lemma}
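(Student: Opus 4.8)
The plan is to use the classical polarization identity, available because $2$ is a unit, to write $x$ as a combination of $u$ and $-u$ with square coefficients lying in $A$. Concretely, I expect the identity
$$ t u = \left(\frac{t+1}{2}\right)^2 u + \left(\frac{t-1}{2}\right)^2 (-u) $$
to do all the work, once $t$ is chosen correctly.

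First I would dispose of the trivial case $x=0$, for which $0\in\mathcal M$ always holds, and assume $x\neq 0$. Set $t=x/u\in K^{\times}$. Since $\myval(t)=\myval(x)-\myval(u)>e$, the element $t$ lies in the maximal ideal of the valuation ring $B$; because that maximal ideal coincides with $\mathfrak m$ by the choice of $B$ (\cite[Theorem 2.7]{He}), we get $t\in\mathfrak m\subseteq A$. This is the one step that requires a moment's care, as it is where the hypothesis that $A$ is a pseudo-valuation domain with $B$ sharing its maximal ideal genuinely enters; everything else is formal.

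Next, since $2\in A^{\times}$, the elements $a:=(t+1)/2$ and $b:=(t-1)/2$ belong to $A$, and a direct computation gives $a^2-b^2=t$. Hence $x=tu=(a^2-b^2)u=a^2u+b^2(-u)$. Finally, since $\pm u\in\mathcal M$ and $\mathcal M$ is a quasi-quadratic $A$-module, we have $a^2u\in\mathcal M$ and $b^2(-u)\in\mathcal M$, so their sum $x$ lies in $\mathcal M$, as desired. I do not anticipate any real obstacle in this argument beyond the verification that $t=x/u$ actually lands in $A$.
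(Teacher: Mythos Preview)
Your argument is correct and is essentially identical to the paper's proof: both set $t=x/u\in\mathfrak m\subseteq A$ and use the polarization identity $x=\bigl(\tfrac{t+1}{2}\bigr)^2 u+\bigl(\tfrac{t-1}{2}\bigr)^2(-u)$. The only cosmetic difference is that you write the value-group operation additively ($\myval(x)-\myval(u)$) whereas the paper uses multiplicative notation with identity $e$; adjust this to match the paper's conventions.
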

\begin{proof}
	Set $y=x/u$.
	We have $y \in \mathfrak m$ because $\myval(y)>e$.
	Since $y \in \mathfrak m$, the elements $\dfrac{y+1}{2}$ and $\dfrac{y-1}{2}$ are elements in $A$.
	We have $$x = uy = \left(\dfrac{y+1}{2}\right)^2u+\left(\dfrac{y-1}{2}\right)^2\cdot(-u) \in \mathcal M.$$
\end{proof}

\begin{lemma}\label{lem:basic4}
	Let $A$ be a pseudo-valuation domain such that any strict units admit a square root.
	Let $x_1$ and $x_2$ be nonzero elements in $A$.
	If $\myval(x_1)=\myval(x_2)$ and $\mylcp(x_1)=\mylcp(x_2)$, we have $x_2=x_1u^2$ for some $u \in A^\times$ with $\pi(u^2)=1$.
\end{lemma}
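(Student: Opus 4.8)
The plan is to avoid Definition~\ref{def:pseudo}(5) altogether and instead work directly with the ratio $v:=x_2/x_1\in K^{\times}$, showing that it is a strict unit in $B$; the square-root hypothesis then finishes the job. First I would observe that $\myval(v)=\myval(x_2)\myval(x_1)^{-1}=e$, since $\myval(x_1)=\myval(x_2)$, so $v\in B^{\times}$. Applying Definition~\ref{def:pseudo}(2) to the factorisation $x_2=v\cdot x_1$ gives $\mylcp(x_2)=\pi(v)\cdot\mylcp(x_1)$; as $\mylcp(x_1)=\mylcp(x_2)$ is an element of $F^{\times}$, we may cancel it to conclude $\pi(v)=1$, i.e. $v$ is a strict unit in $B$.

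Next I would invoke the standing hypothesis via Corollary~\ref{cor:prepseudo}: since every strict unit in $A$ admits a square root, so does every strict unit in $B$, and (as recorded in the proof of that corollary) the square root already lies in $A$. Hence $v=u^2$ for some $u\in A$. From $\pi(u)^2=\pi(u^2)=\pi(v)=1$ we get $\pi(u)=\pm 1$, a nonzero element of $F_0$, so $u\in A^{\times}$ by Lemma~\ref{lem:basic}. Finally $x_2=vx_1=u^2x_1$ with $u\in A^{\times}$ and $\pi(u^2)=\pi(v)=1$, which is precisely the assertion.

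The only point that needs care is the deduction $\pi(v)=1$. One is tempted to write $\pi(v)=\pi(x_2)/\pi(x_1)$, but this is meaningless when $\myval(x_i)>e$, for then $\pi(x_1)=\pi(x_2)=0$; passing instead through property~(2) of the pseudo-angular component map is exactly what repairs this, and it is the one step where the hypothesis $\mylcp(x_1)=\mylcp(x_2)$ (rather than merely $\overline{\myval(x_1)}=\overline{\myval(x_2)}$) is used. I also note that, because $\myval(v)=e$ is forced immediately by $\myval(x_1)=\myval(x_2)$, the argument never has to control the valuation of a square root separately, so — unlike the analogous statement in \cite{FK} — no condition on $2$-torsion in $G$ enters, and $2$ need not be a unit in $A$.
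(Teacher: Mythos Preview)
Your proof is correct. The paper's own proof is a one-line citation of \cite[Lemma~2.15]{FK} to obtain $u\in B^{\times}$ with $x_2=u^2x_1$ and $\pi(u^2)=1$, followed by Lemma~\ref{lem:basic} to place $u$ in $A^{\times}$; you have in effect unpacked that citation into a self-contained argument. The key observation---that $v=x_2/x_1$ lies in $B^{\times}$ and that Definition~\ref{def:pseudo}(2) forces $\pi(v)=1$, whence the square-root hypothesis applies---is exactly what \cite[Lemma~2.15]{FK} encodes, so the two arguments are the same at heart; yours simply avoids the external reference. One small simplification: once you know $\pi(v)=1\in F_0$, Lemma~\ref{lem:basic} already gives $v\in A^{\times}$, so $v$ is a strict unit \emph{in $A$} and you may invoke the square-root hypothesis directly, bypassing Corollary~\ref{cor:prepseudo} entirely.
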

\begin{proof}
	There exists $u \in B^\times$ satisfying the equalities in the lemma by \cite[Lemma 2.15]{FK}.
	We have $\pi(u)=\pm 1 \in F_0$.
	Lemma \ref{lem:basic} implies that $u \in A^\times$.
\end{proof}

\begin{lemma}\label{lem:basic5}
	Let $A$ be a pseudo-valuation domain such that any strict units admit a square root.
	Let $\mathcal M$ be a quasi-quadratic $A$-module in $A$.
	Let $x$ be a nonzero element in $\mathcal M$ and $y \in A$ with $y \neq 0$.
	We assume that $\myval(x)=\myval(y)$ and $\mylcp(x)=\mylcp(y)$.
	Then, we have $y \in \mathcal M$.
\end{lemma}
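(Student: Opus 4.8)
The plan is to reduce the statement immediately to Lemma \ref{lem:basic4}. Since $x$ and $y$ are nonzero elements of $A$ satisfying $\myval(x) = \myval(y)$ and $\mylcp(x) = \mylcp(y)$, that lemma produces an element $u \in A^{\times}$, with $\pi(u^2) = 1$ (a refinement we will not need here), such that $y = x u^2$. In particular $u$ lies in $A$, so we are in a position to apply the defining closure property of $\mathcal M$.

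Next I would invoke directly the definition of a quasi-quadratic $A$-module: for every $a \in A$ one has $a^2 \mathcal M \subseteq \mathcal M$. Taking $a = u$ and using the hypothesis $x \in \mathcal M$, we obtain $y = u^2 x \in u^2 \mathcal M \subseteq \mathcal M$, which is exactly the assertion. No case analysis on $\myval(x)$ or on the characteristic of $F_0$ is required, and the identity $\pi(u^2)=1$ coming from Lemma \ref{lem:basic4} plays no role.

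Consequently there is no genuine obstacle in this lemma: all the substantive work has already been done in Lemma \ref{lem:basic4}, whose proof in turn rests on \cite[Lemma 2.15]{FK} (which produces the unit $u$ in $B^{\times}$ from the pseudo-angular component data) together with Lemma \ref{lem:basic} (which pulls that unit back from $B^{\times}$ into $A^{\times}$, using that $\pi(u) = \pm 1 \in F_0$). Granting those inputs, the present statement is an immediate formal consequence.
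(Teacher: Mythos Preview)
Your proof is correct and matches the paper's own argument essentially line for line: invoke Lemma~\ref{lem:basic4} to write $y = u^2 x$ with $u \in A^\times$, then conclude $y \in \mathcal M$ from the defining closure property $u^2 \mathcal M \subseteq \mathcal M$.
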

\begin{proof}
	There exists $u\in A^\times$ such that $y=u^2x$ by Lemma \ref{lem:basic4}.
	It implies that $y \in \mathcal M$.
\end{proof}

We are ready to demonstrate that both $M_g(\mathcal M)$ and $\Psi(M,g)$ are quasi-quadratic modules.

\begin{proposition}\label{prop:M}
	Let $A$ be a pseudo-valuation domain such that any strict units admit a square root.
	The set $M_g(\mathcal M)$ is a quasi-quadratic $F_0$-module in $F$ whenever $g \in G_{\geq e}$ and $\mathcal M$ is a quasi-quadratic $A$-module in $A$. 
\end{proposition}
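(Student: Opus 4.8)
The plan is to check directly the two defining properties of a quasi-quadratic $F_0$-module in $F$ for the set $M_g(\mathcal M)$, using the lemmas already at hand. Throughout one keeps in mind that $0 \in M_g(\mathcal M)$ by construction, and that we are in the case $\operatorname{char} F_0 \neq 2$, so that $-1 \neq 1$ in $F$ (note $F_0 \subseteq F$) and $\pi(-1) = -1$.

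First I would verify closure under multiplication by squares of $F_0$: given $a \in F_0$ and $c \in M_g(\mathcal M)$, I must show $a^2 c \in M_g(\mathcal M)$. If $a = 0$ or $c = 0$ this is immediate because $0 \in M_g(\mathcal M)$. If $a \neq 0$ and $c \neq 0$, then $a^2 c \in F_0^2 c \subseteq M_g(\mathcal M)$ by the first inclusion of Lemma \ref{lem:basic2}. So this property is essentially a restatement of that lemma.

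Next I would verify closure under addition. Let $c_1, c_2 \in M_g(\mathcal M)$; the goal is $c_1 + c_2 \in M_g(\mathcal M)$. If $c_1 = 0$, $c_2 = 0$, or $c_1 + c_2 = 0$, the claim is clear, so I may assume $c_1$, $c_2$ and $c_1 + c_2$ are all nonzero. By definition of $M_g(\mathcal M)$ there are $x_1, x_2 \in \mathcal M \setminus \{0\}$ with $\myval(x_i) = g$ and $\mylcp(x_i) = c_i$, and $x_1 + x_2 \in \mathcal M$ since $\mathcal M$ is a quasi-quadratic module. I would first rule out $x_1 + x_2 = 0$: if it were zero then $x_2 = (-1)x_1$ with $-1 \in B^\times$, so Definition \ref{def:pseudo}(2) gives $c_2 = \mylcp((-1)x_1) = \pi(-1)\mylcp(x_1) = -c_1$, contradicting $c_1 + c_2 \neq 0$. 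Now $\myval(x_1) = \myval(x_2) = g$ and $\mylcp(x_1) + \mylcp(x_2) = c_1 + c_2 \neq 0$, so the second bullet of Definition \ref{def:pseudo}(4) yields $\myval(x_1 + x_2) = g$ and $\mylcp(x_1 + x_2) = c_1 + c_2$. Hence $c_1 + c_2 = \mylcp(x_1 + x_2)$ with $x_1 + x_2 \in \mathcal M \setminus \{0\}$ of valuation $g$, i.e. $c_1 + c_2 \in M_g(\mathcal M)$, as desired.

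I do not anticipate a genuine obstacle: the multiplicative axiom is Lemma \ref{lem:basic2}, and the additive axiom reduces, after discarding degenerate cases, to a single application of Definition \ref{def:pseudo}(4). The only point that needs care is the edge-case bookkeeping in the additive step — in particular excluding $x_1 + x_2 = 0$ so that $\mylcp(x_1 + x_2)$ is defined and clause (4) is applicable — and this is exactly where the standing hypothesis $\operatorname{char} F_0 \neq 2$ enters.
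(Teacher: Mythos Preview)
Your proof is correct and follows essentially the same route as the paper: closure under $F_0$-squares is Lemma~\ref{lem:basic2}, and closure under addition is a single application of Definition~\ref{def:pseudo}(4) after discarding the trivial cases. Your extra care in ruling out $x_1+x_2=0$ is fine, though note that this step does not actually require $\operatorname{char} F_0\neq 2$: the identity $\mylcp(-x_1)=\pi(-1)\mylcp(x_1)=-c_1$ holds in any characteristic, and the paper in fact invokes this very proposition in the characteristic-two appendix.
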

\begin{proof}
	Set $M=M_g(\mathcal M)$ for simplicity.
	The proposition is obvious when $g \not\in \myval(\mathcal M)$.
	We consider the case in which $g \in \myval(\mathcal M)$.
	We have already demonstrated that $M$ is closed under multiplication by the squares of elements in $F_0$ in Lemma \ref{lem:basic2}.
	We have only to show that $M$ is closed under addition.
	
	Let $a,b$ be arbitrary elements in $M$.
	It is obvious that $a+b \in M$ when $a=0$, $b=0$ or $a+b=0$.
	Let us consider the other case.
	Take $x,y \in \mathcal M$ with $\myval(x)=\myval(y)=g$, $a=\mylcp(x)$ and $b=\mylcp(y)$.
	We have $\myval(x+y)=g$ and $a+b=\mylcp(x)+\mylcp(y)=\mylcp(x+y)$ by Definition \ref{def:pseudo}(4).
	It means that $a+b \in M$.
\end{proof}

\begin{proposition}\label{prop:Psi}
	Let $A$ be a pseudo-valuation domain such that $2$ is a unit and any strict units admit a square root.
	The set $\Psi(M,g)$ is a quasi-quadratic $A$-module in $A$ whenever $g \in G_{\geq e}$ and $M$ is a quasi-quadratic $F_0$-module in $F$. 
\end{proposition}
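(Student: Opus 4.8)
The plan is to verify the two defining closure conditions of a quasi-quadratic $A$-module for $\Psi:=\Psi(M,g)$ directly: stability under multiplication by $a^2$ for every $a\in A$, and stability under addition. Write $\Psi_i:=\Psi_i(M,g)$, so $\Psi=\Psi_1\cup\Psi_2\cup\{0\}$ by Definition~\ref{def:psi}; since $0\in\Psi$ and $A$ is a domain, in each verification one reduces at once to the case where the elements in play are nonzero, hence lie in $\Psi_1\cup\Psi_2$. The tools I would use are the axioms of the pseudo-angular component map (Definition~\ref{def:pseudo}), especially (2), (4) and (6); Lemma~\ref{lem:lcp_basic1} and Lemma~\ref{lem:lcp_basic2}; the ultrametric inequality $\myval(x+y)\geq\min\{\myval(x),\myval(y)\}$, with equality when $\myval(x)\neq\myval(y)$; the fact that $0\neq a\in A\subseteq B$ forces $\myval(a)\geq e$, hence $\myval(a^2)=\myval(a)^2\geq e$; and the fact that $\pi(a)=\pi_0(a)\in F_0$ for $a\in A$, so that the closures of $M$ and of $\mycl_F(M)$ under multiplication by squares apply in particular to $\pi(a)^2$. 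I would first record the reduction: if $z\in A\setminus\{0\}$ admits some nonzero $w\in\Psi$ with $\myval(w)=\myval(z)$ and $\mylcp(w)=\mylcp(z)$, then $z\in\Psi$, since membership in $\Psi_1$ depends only on the valuation and the pseudo-angular component, and membership in $\Psi_2$ only on the valuation exceeding that of a suitable witness.

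For $a^2\Psi\subseteq\Psi$, fix $0\neq a\in A$ and $0\neq x\in\Psi$, so $a^2x\neq 0$. If $x\in\Psi_2$ with witness $u$ (so $\pm u\in\Psi_1$ and $\myval(u)<\myval(x)$), then $\myval(a^2x)=\myval(a)^2\myval(x)\geq\myval(x)>\myval(u)$, and the same $u$ witnesses $a^2x\in\Psi_2$. If $x\in\Psi_1$, then $\overline{\myval(a^2x)}=\overline{\myval(x)}=\overline{g}$; when $\myval(a)=e$ one has $\myval(a^2x)=\myval(x)$ and, by Definition~\ref{def:pseudo}(2), $\mylcp(a^2x)=\pi(a)^2\mylcp(x)$, which lies in $M$ if $\myval(x)=g$ and in $\mycl_F(M)$ if $\myval(x)>g$; when $\myval(a)>e$ one has $\myval(a^2x)>\myval(x)\geq g$ and, by Definition~\ref{def:pseudo}(6), $\mylcp(a^2x)=\mylcp(x)k^2$ for some $k\in F^\times$, which lies in $\mycl_F(M)$ since $\mylcp(x)\in\mycl_F(M)$ in either subcase (recall $M\subseteq\mycl_F(M)$). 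Hence $a^2x\in\Psi_1$ in all cases.

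For $\Psi+\Psi\subseteq\Psi$, take nonzero $x,y\in\Psi$ with $x+y\neq 0$ (otherwise the sum is $0\in\Psi$). Since $x,y\in K^\times$, exactly one of three cases occurs. If $\myval(x)\neq\myval(y)$, say $\myval(x)<\myval(y)$, then $\myval(x+y)=\myval(x)$ and $\mylcp(x+y)=\mylcp(x)$ by Definition~\ref{def:pseudo}(4), so $x+y\in\Psi$ by the reduction above with $w=x$. If $\myval(x)=\myval(y)$ and $\mylcp(x)+\mylcp(y)\neq 0$, then $\myval(x+y)=\myval(x)$ by Definition~\ref{def:pseudo}(4); if $x\in\Psi_2$ or $y\in\Psi_2$ the corresponding witness has valuation $<\myval(x)=\myval(x+y)$, whence $x+y\in\Psi_2$, while if $x,y\in\Psi_1$ then $\mylcp(x+y)=\mylcp(x)+\mylcp(y)$ lies in $M$ or in $\mycl_F(M)$ according as $\myval(x)=g$ or $\myval(x)>g$ (by closure under addition) and $\overline{\myval(x+y)}=\overline{\myval(x)}=\overline{g}$, so $x+y\in\Psi_1$. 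Finally, if $\myval(x)=\myval(y)$ and $\mylcp(x)+\mylcp(y)=0$, then $\myval(x+y)>\myval(x)$ by Lemma~\ref{lem:lcp_basic2}; if $x\in\Psi_2$ or $y\in\Psi_2$ a witness again has valuation $<\myval(x)<\myval(x+y)$, so $x+y\in\Psi_2$, and if $x,y\in\Psi_1$ then $\myval(-x)=\myval(x)=\myval(y)$ and $\mylcp(-x)=-\mylcp(x)=\mylcp(y)$, so $-x$ satisfies the same $\Psi_1$-membership condition as $y$; thus $\pm x\in\Psi_1$, and since $\myval(x)<\myval(x+y)$ we conclude $x+y\in\Psi_2$.

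The step I expect to be the main obstacle is this last case, $x,y\in\Psi_1$ with $\myval(x)=\myval(y)$ and $\mylcp(x)+\mylcp(y)=0$: the valuation of $x+y$ may jump to an element outside the coset $\overline{g}$, so $x+y$ cannot be forced into $\Psi_1$, and one must instead place it in $\Psi_2$. The point that makes this work---and, I suspect, the reason $\Psi_2$ is built into the definition of $\Psi$ in the first place---is that in this situation $\pm x\in\Psi_1(M,g)$, so $x$ itself serves as the witness required by the definition of $\Psi_2$. The remaining cases reduce to routine applications of the pseudo-angular component axioms together with the ultrametric inequality.
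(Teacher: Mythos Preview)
Your proof is correct and follows essentially the same approach as the paper's: verifying closure under $a^2\cdot$ and under $+$ by a case analysis on $\Psi_1$ versus $\Psi_2$, using Definition~\ref{def:pseudo}(2),(4),(6) and Lemma~\ref{lem:lcp_basic2}, with the crucial observation that when $x,y\in\Psi_1$ have equal valuation and opposite pseudo-angular components, $x$ itself becomes a $\Psi_2$-witness for $x+y$. The only organizational difference is that the paper splits the equal-valuation addition case first by whether $x\in\Psi_1\setminus\Psi_2$ or $x\in\Psi_2$ (noting that in the former case $y$ is forced into $\Psi_1$ as well), whereas you split first on whether $\mylcp(x)+\mylcp(y)$ vanishes; both decompositions cover the same ground.
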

\begin{proof}
	Set $\Psi_i=\Psi_i(M,g)$ for $i=1,2$ and $\Psi=\Psi(M,g)$ for simplicity.
	Note that $\myval(\Psi) \subseteq G_{\geq g} \cup \{\infty\}$.
	
	We first demonstrate that $\Psi$ is closed under addition.
	Take $x,y \in \Psi$.
	It is obvious that $x+y \in \Psi$  when $x=0$, $y=0$ or $x+y=0$.
	We consider the other case.
	Consider the case in which $\myval(x) \neq \myval(y)$.
	We may assume that $\myval(x)<\myval(y)$.
	We have $\myval(x+y)=\myval(x)$ and $\mylcp(x+y)=\mylcp(x)$ by Definition \ref{def:pseudo}(4).
	It is easy to check that $x+y \in \Psi_i$ when $x \in \Psi_i$ for $i=1,2$ in this case.
	We omit the details.
	
	The next target is the case in which $\myval(x)=\myval(y)$.
	We first consider the case in which $x \in \Psi_1 \setminus \Psi_2$.
	By the definition of $\Psi_2$, we get $y \not\in  \Psi_2$.
	It implies that $y$ also belongs to $\Psi_1$.
	When $\mylcp(x)+\mylcp(y)=0$, it follows that $\mylcp(-x)=-\mylcp(x)=\mylcp(y)$.
	It is obvious that $\myval(-x)=\myval(x)$. 
	Therefore, the element $-x$ also belongs to $\Psi_1$ both when $\myval(y)=g$ and $\myval(y)>g$.
	On the other hand, we have $\myval(x+y)>\myval(x)$ by Lemma \ref{lem:lcp_basic2}.
	They imply that $x+y \in \Psi_2 \subseteq \Psi$.
	
	When $\mylcp(x)+\mylcp(y) \neq 0$, Definition \ref{def:pseudo}(4) implies that $\myval(x+y)=\myval(x)$ and $\mylcp(x+y)=\mylcp(x)+\mylcp(y)$.
	They imply that $x+y \in \Psi_1$ both when $\myval(x)=g$ and $\myval(x)>g$.
	
	We next consider the case in which $x \in \Psi_2$.
	%We obviously have $y \in \Psi_2$ because $\myval(x)=\myval(y)$.
	Since $\myval(x+y) \geq \myval(x)$, the sum $x+y$ also belongs to $\Psi_2$.
	We have proven that $\Psi$ is closed under addition.
	
	We show that $\Psi$ is closed under multiplication by the square of an element in $A$.
	Take $a \in A$ and $x \in \Psi$.
	It is obvious that $a^2x \in \Psi$ when $a=0$ or $x=0$.
	Assume that $a$ and $x$ are not zeros.
	We first treat the case in which $x \in \Psi_1$.
	When $\myval(x)=g$ and $a \in A^\times$, we get $\myval(a^2x)=\myval(x)=g$ and $\mylcp(a^2x)=\pi_0(a)^2\mylcp(x) \in M$ by Definition \ref{def:pseudo}(2) because $M$ is a quasi-quadratic $F_0$-module in $F$.
	%$A^\times \subseteq B^\times$.
	In the other case, we obtain $\myval(a)>e$ or $\myval(x)>g$.
	We have $\myval(a^2x)>g$.
	There exists $k \in F$ such that $\mylcp(a^2x)=k^2\mylcp(x)$.
	Since $\mylcp(x) \in \mycl_F(M)$, we get $\mylcp(a^2x) \in \mycl_F(M)$.
	It together with the equality $\overline{\myval(a^2x)}=\overline{g}$ implies that $a^2x \in \Psi_1$.
	
	The remaining case is the case in which $x \in \Psi_2$.
	However, it immediately follows from the definition of $\Psi_2$ that $a^2x \in \Psi_2$ because $\myval(a^2x) \geq \myval(x)$. 
\end{proof}

The following structure theorem says that a quasi-quadratic $A$-module in $A$ is the union of quasi-quadratic modules of the form $\Psi(M,g)$.
\begin{theorem}\label{thm:decomp}
	Let $A$ be a pseudo-valuation domain such that $2$ is a unit and any strict units admit a square root.
	Consider a quasi-quadratic $A$-module $\mathcal M$ in $A$.
	The equality $$\mathcal M = \bigcup_{g \in G_{\geq e}}\Psi(M_g(\mathcal M),g)$$ holds true.
\end{theorem}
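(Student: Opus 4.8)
The plan is to prove the two inclusions of the displayed equality separately; the inclusion $\mathcal{M} \subseteq \bigcup_{g \in G_{\geq e}} \Psi(M_g(\mathcal{M}),g)$ is the easier one. Given a nonzero $x \in \mathcal{M}$, set $g := \myval(x)$, which belongs to $G_{\geq e}$ because $x \in A$. By the definition of $M_g(\mathcal{M})$ we have $\mylcp(x) \in M_g(\mathcal{M})$, and since $\overline{\myval(x)} = \overline{g}$ and $\myval(x) = g$, the element $x$ lies in $\Psi_1(M_g(\mathcal{M}),g) \subseteq \Psi(M_g(\mathcal{M}),g)$. As $0$ lies in every $\Psi(M_g(\mathcal{M}),g)$ and $e \in G_{\geq e}$, this inclusion follows.

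For the reverse inclusion it is enough to show $\Psi(M_g(\mathcal{M}),g) \subseteq \mathcal{M}$ for each fixed $g \in G_{\geq e}$, and I would check this on $\Psi_1$, $\Psi_2$ and $\{0\}$ in turn. Let $x$ be a nonzero element of $\Psi_1(M_g(\mathcal{M}),g)$. If $\myval(x) = g$ then $\mylcp(x) \in M_g(\mathcal{M})$, so there is $x' \in \mathcal{M} \setminus \{0\}$ with $\myval(x') = g = \myval(x)$ and $\mylcp(x') = \mylcp(x)$, whence $x \in \mathcal{M}$ by Lemma \ref{lem:basic5}. If instead $\myval(x) > g$, then from $\overline{\myval(x)} = \overline{g}$ I would write $\myval(x) = g h^2$ with $h > e$; since $\mylcp(x) \in \mycl_F(M_g(\mathcal{M}))$ is nonzero, write $\mylcp(x) = \sum_{i=1}^{n} k_i^2 c_i$ with $k_i \in F^\times$ and $c_i \in M_g(\mathcal{M}) \setminus \{0\}$. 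Lemma \ref{lem:basic2} places each $k_i^2 c_i$ in $M_{gh^2}(\mathcal{M})$, and Proposition \ref{prop:M} (closure under addition) then gives $\mylcp(x) \in M_{gh^2}(\mathcal{M})$; a witness $x' \in \mathcal{M} \setminus \{0\}$ with $\myval(x') = gh^2 = \myval(x)$ and $\mylcp(x') = \mylcp(x)$ together with Lemma \ref{lem:basic5} again yields $x \in \mathcal{M}$. Next, a nonzero $x \in \Psi_2(M_g(\mathcal{M}),g)$ comes with $u \in \Psi_1(M_g(\mathcal{M}),g)$ such that $-u \in \Psi_1(M_g(\mathcal{M}),g)$ and $\myval(u) < \myval(x)$; by what was just shown $\pm u \in \mathcal{M}$, so $x \in \mathcal{M}$ by Lemma \ref{lem:basic3}. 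Finally $0 \in \mathcal{M}$, so $\Psi(M_g(\mathcal{M}),g) \subseteq \mathcal{M}$, and taking the union over $g$ finishes the argument.

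The step needing the most care is the sub-case $\myval(x) > g$ of $\Psi_1$. There the hypothesis only says $\mylcp(x) \in \mycl_F(M_g(\mathcal{M}))$, a closure taken over \emph{all} $F$-squares and hence in general strictly larger than $M_g(\mathcal{M})$ itself; one has to see that the extra $F$-square factors can be absorbed into the valuation jump $h^2 > e$, converting the statement into one about $M_{gh^2}(\mathcal{M})$ via Lemma \ref{lem:basic2}, and then transport membership back to the concrete element $x$ using Lemma \ref{lem:basic5}. Everything else is a direct unwinding of the definitions of $\Psi_1$, $\Psi_2$ and $M_g$.
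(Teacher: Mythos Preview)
Your proof is correct and follows the same overall architecture as the paper's: the same case split on $\Psi_1$ (with subcases $\myval(x)=g$ and $\myval(x)>g$) and $\Psi_2$, and the same appeals to Lemmas~\ref{lem:basic5} and~\ref{lem:basic3}. The one noteworthy difference is in the subcase $\myval(x)>g$: the paper builds an explicit element $z=\sum_i w_i^2 y_i\in\mathcal M$ directly (choosing $y_i\in\mathcal M$ and $w_i\in\mathfrak m$ via Lemma~\ref{lem:lcp_basic1}) and must carry along the auxiliary hypothesis that all partial sums $\sum_{i\le j}u_i^2v_i$ are nonzero in order to apply Definition~\ref{def:pseudo}(4) inductively; you instead invoke Lemma~\ref{lem:basic2} to place each $k_i^2c_i$ in $M_{gh^2}(\mathcal M)$ and then use the closure under addition from Proposition~\ref{prop:M} to conclude $\mylcp(x)\in M_{gh^2}(\mathcal M)$ before finishing with Lemma~\ref{lem:basic5}. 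Your route is a little cleaner, since it reuses already-established closure properties of $M_{gh^2}(\mathcal M)$ and avoids the partial-sum bookkeeping.
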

\begin{proof}
	Set $\mathcal N=\bigcup_{g \in G_{\geq e}}\Psi(M_g(\mathcal M),g)$ for simplicity.
	The inclusion $\mathcal M \subseteq \mathcal N$ is obvious.
	We demonstrate the opposite inclusion.
	Take $x \in \mathcal N$.
	We obviously have $x \in \mathcal M$ when $x=0$.
	So, we concentrate on the other case.
	We have $x \in \Psi(M_g(\mathcal M),g)$ for some $g \in G_{\geq e}$.
	Set $M=M_g(\mathcal M)$ and $\Psi=\Psi(M,g)$.
	We also define $\Psi_1$ and $\Psi_2$ in the same manner.
	
	When $x \in \Psi_1$ and $\myval(x)=g$, there exists $y \in \mathcal M$ such that $\myval(x)=\myval(y)$ and $\mylcp(x)=\mylcp(y)$.
	It implies that $x \in \mathcal M$ by Lemma \ref{lem:basic5}.
	
	We then consider the case in which $x \in \Psi_1$ and $\myval(x)>g$.
	We have $\myval(x)=gh^2$ for some $h \in G_{\geq e}$ by the definition of $\Psi_1$.
	There exist a positive integer $n$, elements $u_1, \ldots, u_n \in F$ and $v_1, \ldots, v_n \in M$ such that $\mylcp(x)=\sum_{i=1}^n u_i^2v_i$.
	We may assume that, for any $1 \leq j \leq n$, (*) $\sum_{i=1}^ju_i^2v_i \neq 0$ without loss of generality.
	We can take $y_i \in \mathcal M$ with $\myval(y_i)=g$ and $v_i=\mylcp(y_i)$ for each $1 \leq i \leq n$ by the definition of $M$. 
	We can choose $w_i \in K^\times$ such that $\myval(w_i)=h$ and $\mylcp(w_i^2y_i)=u_i^2v_i$ for each $1 \leq i \leq n$ by Lemma \ref{lem:lcp_basic1}.
	Since $\myval(w_i)=h>e$, we get $w_i \in \mathfrak m \subset A$.
	Put $z=\sum_{i=1}^n w_i^2y_i$, then $z \in \mathcal M$.
	On the other hand, applying Definition \ref{def:pseudo}(4) inductively together with the assumption (*), we get $\myval(z)=gh^2=\myval(x)$ and $\mylcp(z)=\sum_{i=1}^nu_i^2v_i=\mylcp(x)$.
	It implies that $x \in \mathcal M$  by Lemma \ref{lem:basic5}.
	
	The remaining case is the case in which $x \in \Psi_2$.
	By the definition of $\Psi_2$, there exists $u \in \Psi_1$ such that $\pm u\in\Psi_1$ and $\myval(u)<\myval(x)$.
	We have already demonstrated that any element in $\Psi_1$ belongs to $\mathcal M$.
	In particular, we have $\pm u \in \mathcal M$.
	We immediately get $x \in \mathcal M$ by Lemma \ref{lem:basic3}.
\end{proof}

\begin{proposition}\label{supp1}
	Let $A$ be a pseudo-valuation domain such that $2$ is a unit and any strict units admit a square root.
	Consider a quasi-quadratic $A$-module $\mathcal M$ in $A$ and a nonzero element $x\in A$ with $\myval(x)=g$.
	Then the following conditions are equivalent:
	\begin{enumerate}
		\item[(1)] $\pm x\in \mathcal M$;
		\item[(2)] $\pm\mylcp(x)\in M_g(\mathcal M)$.
	\end{enumerate}
	Furthermore, $M_h(\mathcal M)=F$ whenever  $\pm\mylcp(x)\in M_g(\mathcal M)$ and $g<h$.
\end{proposition}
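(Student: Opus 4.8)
The plan is to establish the equivalence (1)$\Leftrightarrow$(2) by transporting membership back and forth between $\mathcal M$ and $M_g(\mathcal M)$ along the pseudo-angular component map, and then to deduce the final assertion from the implication (2)$\Rightarrow$(1) combined with Lemma~\ref{lem:basic3}. A preliminary observation I would record is that $\mylcp(-x)=-\mylcp(x)$ for every nonzero $x\in K$: this is immediate from Definition~\ref{def:pseudo}(2) applied to $-1\in B^{\times}$, since $\pi(-1)=-1$.

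For (1)$\Rightarrow$(2): if $\pm x\in\mathcal M$, then $x$ and $-x$ are nonzero elements of $\mathcal M$ with $\myval(x)=\myval(-x)=g$, so by the very definition of $M_g(\mathcal M)$ their pseudo-angular components $\mylcp(x)$ and $\mylcp(-x)=-\mylcp(x)$ both lie in $M_g(\mathcal M)$, i.e. $\pm\mylcp(x)\in M_g(\mathcal M)$. For (2)$\Rightarrow$(1): since $x\neq 0$ we have $\mylcp(x)\in F^{\times}$, so $\pm\mylcp(x)\in M_g(\mathcal M)$ means there are $y_{+},y_{-}\in\mathcal M\setminus\{0\}$ with $\myval(y_{\pm})=g$, $\mylcp(y_{+})=\mylcp(x)$ and $\mylcp(y_{-})=-\mylcp(x)=\mylcp(-x)$. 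Applying Lemma~\ref{lem:basic5} to the pair $(y_{+},x)$ gives $x\in\mathcal M$, and to the pair $(y_{-},-x)$ gives $-x\in\mathcal M$; hence $\pm x\in\mathcal M$.

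For the "furthermore" clause, assume $\pm\mylcp(x)\in M_g(\mathcal M)$ and $g<h$. By the implication just proven, $\pm x\in\mathcal M$; and since $0\neq x\in A\subseteq B$ we have $g=\myval(x)\geq e$, so $h>g\geq e$. To show $M_h(\mathcal M)=F$, fix a nonzero $c\in F$ (the value $0$ lying in $M_h(\mathcal M)$ by definition). Definition~\ref{def:pseudo}(3) produces $w\in K$ with $\myval(w)=h$ and $\mylcp(w)=c$, and $\myval(w)=h>e$ forces $w\in\mathfrak m\subseteq A$. Now $w\in A$ satisfies $\myval(w)=h>g=\myval(x)$ and $\pm x\in\mathcal M$, so Lemma~\ref{lem:basic3} gives $w\in\mathcal M$; therefore $c=\mylcp(w)\in M_h(\mathcal M)$. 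As $c$ was arbitrary, $M_h(\mathcal M)=F$.

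I do not expect a genuine obstacle: the proof is a recombination of Lemmas~\ref{lem:basic3} and~\ref{lem:basic5} with Definition~\ref{def:pseudo}. The only point demanding care is keeping track of which ring ($A$, $B$, or $K$) each element belongs to — in particular, one must invoke $h>e$ to place $w$ inside $\mathfrak m\subseteq A$ \emph{before} applying Lemma~\ref{lem:basic3}, which is an $A$-statement — and the hypothesis that $2$ is a unit enters only implicitly through that lemma.
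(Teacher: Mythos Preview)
Your proof is correct and follows essentially the same route as the paper's: the equivalence is handled by the definition of $M_g(\mathcal M)$ together with Lemma~\ref{lem:basic5}, and the ``furthermore'' clause is obtained from (2)$\Rightarrow$(1), Definition~\ref{def:pseudo}(3), and Lemma~\ref{lem:basic3}. You supply a bit more detail (the remark $\mylcp(-x)=-\mylcp(x)$ and the explicit witnesses $y_{\pm}$), but the argument is the same.
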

\begin{proof}
	(1) $\Rightarrow$ (2): It is nothing to prove.
	
	(2) $\Rightarrow$ (1): It is immediate from Lemma \ref{lem:basic5}.
	
	We next demonstrate the `furthermore' part. Take a nonzero element $c\in F$. 
	We want to show that $c\in M_h(\mathcal M)$. By Definition \ref{def:pseudo}(3), there exists an element $u\in K$ with
	$\mylcp(u)=c$ and $\myval(u)=h$. Since $\myval(u)>g\geq e$, we have $u\in \mathfrak m\subset A$.
	It follows from the assumption that $\pm x\in \mathcal M$.
	Hence we have $u\in \mathcal M$ by Lemma \ref{lem:basic3}.
	This implies that $c=\mylcp(u)\in M_h(\mathcal M)$, as desired.
\end{proof}

\begin{corollary}\label{supp2}
	Let $A$ be a pseudo-valuation domain such that $2$ is a unit and any strict units admit a square root.
	Consider a quasi-quadratic $A$-module $\mathcal M$ in $A$.
	Then the following conditions are equivalent:
	\begin{enumerate}
		\item[(1)] $\mathcal M =A$;
		\item[(2)] $M_g(\mathcal M)=F$ for any $g\in G_{>e}$ and $M_e(\mathcal M)=F_0$;
		\item[(3)] $M_e(\mathcal M)=F_0$.
	\end{enumerate}
\end{corollary}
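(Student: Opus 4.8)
The plan is to establish the cycle $(1)\Rightarrow(2)\Rightarrow(3)\Rightarrow(1)$; the implication $(2)\Rightarrow(3)$ is trivial, so the content lies in the other two.

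For $(1)\Rightarrow(2)$, I would assume $\mathcal M=A$ and first compute $M_e(A)$. A nonzero $x\in A$ satisfies $\myval(x)=e$ exactly when $x\in B^\times$, and then $\mylcp(x)=\pi(x)=\pi_0(x)\in F_0^\times$ by Definition \ref{def:pseudo}(1) together with the fact that $\pi_0=\pi|_A$; conversely every $c\in F_0^\times$ equals $\pi_0(y)$ for some $y\in A$, which then has $\myval(y)=e$ and $\mylcp(y)=c$. This gives $M_e(A)=F_0$. For $g\in G_{>e}$ and any nonzero $c\in F$, Definition \ref{def:pseudo}(3) supplies $u\in K$ with $\myval(u)=g$ and $\mylcp(u)=c$; since $\myval(u)=g>e$ we get $u\in\mathfrak m\subset A$, so $c\in M_g(A)$, and hence $M_g(A)=F$.

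For $(3)\Rightarrow(1)$, I would assume $M_e(\mathcal M)=F_0$. Then $\pm\mylcp(1)=\pm 1\in F_0=M_e(\mathcal M)$ with $\myval(1)=e$, so Proposition \ref{supp1} (applied with $x=1$ and $g=e$) yields $\pm 1\in\mathcal M$. Now for an arbitrary nonzero $y\in A$ we have $\myval(y)\geq e$ because $A\subseteq B$. If $\myval(y)=e$, then $\mylcp(y)=\pi_0(y)\in F_0=M_e(\mathcal M)$, so there is $x\in\mathcal M\setminus\{0\}$ with $\myval(x)=e=\myval(y)$ and $\mylcp(x)=\mylcp(y)$, whence $y\in\mathcal M$ by Lemma \ref{lem:basic5}. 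If instead $\myval(y)>e=\myval(1)$, then $y\in\mathcal M$ by Lemma \ref{lem:basic3} applied with $u=1$. Since $0\in\mathcal M$ as well, $\mathcal M=A$ follows.

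I do not expect a genuine obstacle: the proof is essentially a bookkeeping assembly of the earlier results. The one spot requiring a little care is the identity $\mylcp(y)=\pi_0(y)$ for $y\in A$ with $\myval(y)=e$, which is where the compatibility of $\pi_0$ with $\pi$ (and Lemma \ref{lem:basic}) is used; everything else is formal once Lemmas \ref{lem:basic3} and \ref{lem:basic5} and Proposition \ref{supp1} are in hand.
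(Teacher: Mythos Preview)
Your proof is correct and follows the same overall cycle $(1)\Rightarrow(2)\Rightarrow(3)\Rightarrow(1)$ as the paper. The differences are only tactical: for $(1)\Rightarrow(2)$ you compute $M_e(A)$ and $M_g(A)$ directly from the definitions, whereas the paper invokes the ``furthermore'' part of Proposition~\ref{supp1} and the identity $d=\bigl(\tfrac{d+1}{2}\bigr)^2\cdot 1+\bigl(\tfrac{d-1}{2}\bigr)^2\cdot(-1)$ inside $M_e(\mathcal M)$; for $(3)\Rightarrow(1)$ you split on $\myval(y)$ and cite Lemmas~\ref{lem:basic5} and~\ref{lem:basic3}, while the paper applies the same quadratic identity directly in $A$ to cover all $a\in A$ at once. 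Both routes are short and rest on the same ingredients.
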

\begin{proof}
	(1) $\Rightarrow$ (2): By the assumption, we have $\pm 1\in M_e(\mathcal M)$.
	It follows from Proposition \ref{supp1} that $M_g(\mathcal M)=F$ for any $g>e$ and $\pm 1\in \mathcal M$.
	Note that $M_e(\mathcal M)$ is a quasi-quadratic $F_0$-module in $F_0$ because we have $\mylcp(x)=\pi(x) \in F_0$ for any nonzero element $x \in A$ with $\myval(x)=e$.
	Take an element $d\in F_0$. We have
	$d=\Bigl(\dfrac{d+1}{2}\Bigl)^2\cdot 1+\Bigl(\dfrac{d-1}{2}\Bigl)^2\cdot (-1)\in M_e(\mathcal M)$.
	This implies that $M_e(\mathcal M)=F_0$.

	(2) $\Rightarrow$ (3): This requires no comment.

	(3) $\Rightarrow$ (1): We get $\pm 1\in M_e(\mathcal M)$. By Proposition \ref{supp1}, 
	it follows that $\pm 1\in\mathcal M$.
	Hence we have
	$a=\Bigl(\dfrac{a+1}{2}\Bigl)^2\cdot 1+\Bigl(\dfrac{a-1}{2}\Bigl)^2\cdot (-1)\in \mathcal M$
	for any $a\in A$.
\end{proof}

\begin{corollary}\label{supp3}
	Let $A$ be a pseudo-valuation domain such that $2$ is a unit and any strict units admit a square root.
	Consider a quasi-quadratic $A$-module $\mathcal M$ in $A$.
	Then the following assertions hold:
	\begin{enumerate}
		\item[(1)] $\mathcal M \cap (-\mathcal M)=\{x\in A\setminus \{0\}\;\vert\;\pm\mylcp(x)\in M_{\myval(x)}(\mathcal M)\}
		\cup\{0\}$.
		\item[(2)] The quasi-quadratic $A$-module $\mathcal M$ becomes
		an ideal of $A$ if and only if $\pm\mylcp(x)\in M_{\myval(x)}(\mathcal M)$ for any nonzero $x\in \mathcal M$. 
	\end{enumerate}
\end{corollary}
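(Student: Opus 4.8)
The plan is to reduce everything to Proposition~\ref{supp1}, Proposition~\ref{prop:M}, and Lemmas~\ref{lem:basic3} and \ref{lem:basic5}, so that only one genuinely new computation is required.

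For part~(1), I would observe that a nonzero $x\in A$ lies in $\mathcal M\cap(-\mathcal M)$ exactly when $\pm x\in\mathcal M$, and then apply the equivalence (1)$\Leftrightarrow$(2) of Proposition~\ref{supp1} with $g=\myval(x)$ to rephrase this as $\pm\mylcp(x)\in M_{\myval(x)}(\mathcal M)$. Since $0$ belongs to every quasi-quadratic module, in particular to $\mathcal M\cap(-\mathcal M)$ and to the right-hand set, the claimed equality follows immediately.

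For the forward implication of part~(2), if $\mathcal M$ is an ideal then $-\mathcal M=(-1)\mathcal M\subseteq\mathcal M$, so $\mathcal M=-\mathcal M$; hence every nonzero $x\in\mathcal M$ satisfies $\pm x\in\mathcal M$, and Proposition~\ref{supp1} yields $\pm\mylcp(x)\in M_{\myval(x)}(\mathcal M)$. For the converse, assume $\pm\mylcp(x)\in M_{\myval(x)}(\mathcal M)$ for every nonzero $x\in\mathcal M$. Part~(1) then gives $\mathcal M\subseteq\mathcal M\cap(-\mathcal M)$, so $\mathcal M=-\mathcal M$; since $\mathcal M$ is already closed under addition, it suffices to prove $a\mathcal M\subseteq\mathcal M$ for all $a\in A$. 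Fixing nonzero $a\in A$ and nonzero $x\in\mathcal M$ (the other cases being trivial), one has $\myval(ax)\geq\myval(x)$, and I would split into two cases. If $\myval(ax)>\myval(x)$, then $\pm x\in\mathcal M$ and Lemma~\ref{lem:basic3} gives $ax\in\mathcal M$. If $\myval(ax)=\myval(x)=:g$, then $\myval(a)=e$, so $\pi(a)=\pi_0(a)$ is a nonzero element of $F_0$ and $\mylcp(ax)=\pi_0(a)\mylcp(x)$ by Definition~\ref{def:pseudo}(2).

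The heart of the argument is this last case, where I must show $\mylcp(ax)\in M_g(\mathcal M)$. By Proposition~\ref{prop:M}, $M_g(\mathcal M)$ is a quasi-quadratic $F_0$-module in $F$; combining this with the hypothesis $\pm\mylcp(x)\in M_g(\mathcal M)$ and the fact that $2$ is invertible in $F_0$ (since $2$ is a unit in $A$), I would write, exactly as in the proof of Corollary~\ref{supp2},
$$\pi_0(a)\mylcp(x)=\left(\frac{\pi_0(a)+1}{2}\right)^2\mylcp(x)+\left(\frac{\pi_0(a)-1}{2}\right)^2(-\mylcp(x))\in M_g(\mathcal M).$$
Thus $\mylcp(ax)\in M_g(\mathcal M)$, and since $ax\neq 0$ with $\myval(ax)=g$ this produces a nonzero $z\in\mathcal M$ with $\myval(z)=\myval(ax)$ and $\mylcp(z)=\mylcp(ax)$; Lemma~\ref{lem:basic5} then forces $ax\in\mathcal M$, finishing the proof that $\mathcal M$ is an ideal. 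I expect the only real care to be needed in organising the two valuation cases and in recording that $\pi(a)$ lands in $F_0$ (via $a\in A$ and the commutative diagram). The step \emph{``$\pm\mylcp(x)$ plus $2$ invertible $\Rightarrow$ closure under multiplication by all of $F_0$''} is the one place where the symmetry hypothesis is actually used, and it is exactly what upgrades $M_g(\mathcal M)$ from being merely closed under squares to absorbing the scalar $\pi_0(a)$.
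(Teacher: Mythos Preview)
Your proof is correct, and part~(1) and the forward direction of~(2) match the paper exactly. For the converse in~(2), however, you take a longer route than necessary. The paper, after obtaining $-x\in\mathcal M$ from the hypothesis via Lemma~\ref{lem:basic5} (just as you do), simply applies the identity
\[
ax=\left(\frac{a+1}{2}\right)^2 x+\left(\frac{a-1}{2}\right)^2(-x)
\]
directly in $A$, using only that $2\in A^\times$ and that $\mathcal M$ is a quasi-quadratic $A$-module. This handles all $a\in A$ at once with no case split. You instead distinguish $\myval(ax)>\myval(x)$ (invoking Lemma~\ref{lem:basic3}) from $\myval(ax)=\myval(x)$, and in the latter case you push the same algebraic identity down to $F_0$, show $\mylcp(ax)\in M_g(\mathcal M)$, and then lift back via Lemma~\ref{lem:basic5}. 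That works, but it is the residue-field shadow of an identity already available in $A$; the passage through $M_g(\mathcal M)$, Proposition~\ref{prop:M}, and the lifting step are all avoidable. The upshot: your argument is sound and uses the same key identity, but the paper's version is shorter because it stays in $A$ throughout.
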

\begin{proof}
	(1) It is obvious from Proposition \ref{supp1}.
	
	(2) Assume that $\mathcal M$ is an ideal of $A$. Since $\mathcal M \cap (-\mathcal M)=\mathcal M$, 
	the `only if' part follows from (1). We demonstrate the opposite implication.
	Let $x$ be a nonzero element of $\mathcal M$. 
	It is enough to prove that $ax\in \mathcal M$ for any $a\in A$.
	By the assumption we get $-\mylcp(x)\in M_{\myval(x)}(\mathcal M)$.
	Hence it follows from Lemma \ref{lem:basic5} that $-x\in\mathcal M$.
	Thus we have
	$ax=\Bigl(\dfrac{a+1}{2}\Bigl)^2\cdot x+\Bigl(\dfrac{a-1}{2}\Bigl)^2\cdot (-x)\in \mathcal M$.
\end{proof}

We can show that an ideal of a pseudo-valuation ring is represented as a union of
sets of the form $\Psi(F,g)$ for $g\in G_{\geq e}$. To see this, 
we first note that the following equalities hold true:
\begin{align*}
	\Psi_1(F,g)&=\{x \in A \setminus \{0\} \mid \overline{\myval(x)} = \overline{g} \text{ and }\myval(x)\geq g \},\\
	\Psi_2(F,g)&=\{x \in A \setminus \{0\} \mid  \exists u \in A\setminus\{0\}
	\text{ with }\overline{\myval(u)}=\overline{g} \text{ and } g\leq \myval(u)<\myval(x)\}.
\end{align*}

\begin{corollary}\label{ideal-1}
	Let $A$ be a pseudo-valuation domain such that $2$ is a unit and any strict units admit a square root.
	Let $I$ be an ideal of $A$. Set $S=\myval(I)$. Then the following assertions hold true:
	\begin{enumerate}[(1)]
		\item If the set $S$ has no smallest element, we have
		$$
		I=\bigcup_{x\in I\setminus\{0\}}\Psi(F,\myval(x))\cup\{0\}.
		$$
		\item If the set $S$ has the smallest element $m(I)$,
		we have
		$$
		I=\Psi(M_{m(I)}(I),m(I)).
		$$
	\end{enumerate}
\end{corollary}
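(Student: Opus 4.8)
The plan is to derive both statements by collapsing the union produced by the structure theorem. Since $I$ is an ideal it is, in particular, a quasi-quadratic $A$-module in $A$, so Theorem~\ref{thm:decomp} gives $I=\bigcup_{g\in G_{\geq e}}\Psi(M_g(I),g)$, and the proof amounts to identifying which terms survive. Two remarks handle the bookkeeping. First, if $g\in G_{\geq e}$ is not the valuation of any nonzero element of $I$, then $M_g(I)=\{0\}$; since $\mylcp$ takes values in $F^\times$ and $\mycl_F(\{0\})=\{0\}$, Definition~\ref{def:psi} immediately gives $\Psi_1(\{0\},g)=\Psi_2(\{0\},g)=\emptyset$, hence $\Psi(\{0\},g)=\{0\}$. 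Second, because $I$ is an ideal, Corollary~\ref{supp3}(2) yields $\pm\mylcp(x)\in M_{\myval(x)}(I)$ for every nonzero $x\in I$, so the ``furthermore'' clause of Proposition~\ref{supp1} gives $M_h(I)=F$ whenever $h$ strictly exceeds $\myval(x)$ for some nonzero $x\in I$. Finally, $\myval(x)\geq e$ for every $x\in I\setminus\{0\}$ because $I\subseteq B$.

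For~(1), assume $S$ has no smallest element (note $I\neq\{0\}$, as otherwise $S$ trivially has a least element). For each $g\in\myval(I\setminus\{0\})$ there is a strictly smaller element of $\myval(I\setminus\{0\})$, so the second remark gives $M_g(I)=F$, while for every other $g\in G_{\geq e}$ the corresponding term is $\{0\}$ by the first remark. Hence Theorem~\ref{thm:decomp} rewrites as $I=\bigcup_{g\in\myval(I\setminus\{0\})}\Psi(F,g)\cup\{0\}=\bigcup_{x\in I\setminus\{0\}}\Psi(F,\myval(x))\cup\{0\}$, as claimed.

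For~(2), put $m=m(I)=\min\myval(I\setminus\{0\})$ and fix $x_0\in I$ with $\myval(x_0)=m$. For $e\leq g<m$ no nonzero element of $I$ has valuation $g$, so that term is $\{0\}$; for $g>m$ the second remark gives $M_g(I)=F$. As $G$ is linearly ordered and $m\geq e$, Theorem~\ref{thm:decomp} becomes $I=\Psi(M_m(I),m)\cup\bigcup_{g>m}\Psi(F,g)$, and it remains to prove $\Psi(F,g)\subseteq\Psi(M_m(I),m)$ for each $g>m$. Here I would invoke the simplified descriptions of $\Psi_1(F,g)$ and $\Psi_2(F,g)$ recorded just before the corollary: they show that every nonzero $x\in\Psi(F,g)$ satisfies $\myval(x)\geq g>m$. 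On the other hand, both $x_0$ and $-x_0$ are nonzero elements of $I$ of valuation $m$, so $\mylcp(x_0),\mylcp(-x_0)\in M_m(I)$ and therefore $\pm x_0\in\Psi_1(M_m(I),m)$; since $\myval(x_0)=m<\myval(x)$, the definition of $\Psi_2$ places $x$ in $\Psi_2(M_m(I),m)\subseteq\Psi(M_m(I),m)$. This yields the desired inclusion, hence $I=\Psi(M_m(I),m)$.

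I expect the one genuinely non-formal step to be the last inclusion $\Psi(F,g)\subseteq\Psi(M_m(I),m)$ for $g>m$: one must check that every piece living at a higher valuation is absorbed into the single piece at the bottom valuation $m$, which is exactly what the $\Psi_2$-component of Definition~\ref{def:psi} is built to do, fed by the pair $\pm x_0\in\Psi_1(M_m(I),m)$. The rest is routine manipulation of Theorem~\ref{thm:decomp}, Proposition~\ref{supp1} and Corollary~\ref{supp3}, together with the observation that $\Psi(\{0\},g)=\{0\}$ disposes of all the empty valuations.
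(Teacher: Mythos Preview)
Your proof is correct. The main difference from the paper is the starting point: you begin both parts from the full decomposition $I=\bigcup_{g\in G_{\geq e}}\Psi(M_g(I),g)$ of Theorem~\ref{thm:decomp} and then simplify the union by computing each $M_g(I)$ (as $\{0\}$, as $F$, or leaving $M_{m(I)}(I)$ alone), whereas the paper verifies the two inclusions more directly. In part~(1) the paper does not invoke Theorem~\ref{thm:decomp} at all, showing $I\subseteq\text{RHS}$ trivially and $\text{RHS}\subseteq I$ via Proposition~\ref{supp1} and Lemma~\ref{lem:basic5}; in part~(2) it uses Theorem~\ref{thm:decomp} only for the inclusion $\Psi(M_{m(I)}(I),m(I))\subseteq I$, then shows $I\subseteq\Psi(M_{m(I)}(I),m(I))$ by checking that each nonzero $x\in I$ lands in $\Psi_1$ or $\Psi_2$ according to whether $\myval(x)=m(I)$ or $\myval(x)>m(I)$. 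Your route is a little more uniform and exploits the structure theorem to its full extent; the paper's route is slightly more elementary in~(1) but ends up re-doing by hand some of what Theorem~\ref{thm:decomp} already packages. Both arguments ultimately rest on the same two facts: Corollary~\ref{supp3}(2) to get $\pm\mylcp(x)\in M_{\myval(x)}(I)$, and the ``furthermore'' clause of Proposition~\ref{supp1} to force $M_h(I)=F$ above any such level.
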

\begin{proof}
	(1) Assume that $I\neq \{0\}$. It can be easily seen that the ideal $I$
	is included the right hand side of the equality. We next show the opposite inclusion.
	Take a nonzero element $y\in \Psi(F,\myval(x))$ for some $x\in I\setminus\{0\}$.
	We first consider the case in which $y\in \Psi_1(F,\myval(x))$. It follows that
	$\overline{\myval(y)}=\overline{\myval(x)}$ and $\myval(y)\geq \myval(x)$.
	Since $S$ has no smallest element, there exists an element $w\in I$ such that
	$\myval(x)>\myval(w)$.
	It follows from Proposition \ref{supp1} that $M_{\myval(y)}(I)=F$ because 
	$\pm\mylcp(w)\in M_{\myval(w)}(I)$ by Corollary \ref{supp3}.
	Since $\mylcp(y)$ belongs to $M_{\myval(y)}(I)$, there exists an element $z\in I$ such that
	$\mylcp(y)=\mylcp(z)$ and $\myval(y)=\myval(z)$. Hence we have
	$y\in I$ from Lemma \ref{lem:basic5}.
	
	We next consider the remaining case in which $y\in \Psi_2(F, \myval(x))$.
	There exists a nonzero $u\in A$ such that $\myval(x)\leq\myval(u)<\myval(y)$.
	Thus we have $M_{\myval(y)}(I)=F$ by Proposition \ref{supp1} because $\pm \mylcp(x) \in M_{\myval(x)}(I)$. 
	We get $y\in I$ by Lemma \ref{lem:basic5} and the definition of $M_{\myval(y)}(I)$ because $\mylcp(y) \in M_{\myval(y)}(I)$. We have shown the assertion (1).
	
	(2) By Theorem \ref{thm:decomp}, we see that
	the ideal $I$ includes the right hand side of the equality.
	We show the opposite inclusion. Take a nonzero element $x\in I$.
	There exists an element $x_0\in I$ such that $m(I)=\myval(x_0)$.
	When $\myval(x)=\myval(x_0)$, it is immediate that $x\in\Psi_1(M_{m(I)}(I),m(I))\subseteq
	\Psi(M_{m(I)}(I),m(I))$. We next suppose that $\myval(x)>\myval(x_0)$.
	It follows from Corollary \ref{supp3}(2) that $\pm x_0\in\Psi_1(M_{m(I)}(I),m(I))$.
	Thus we have $x\in \Psi_2(M_{m(I)}(I),m(I))\subseteq\Psi(M_{m(I)}(I),m(I))$.
\end{proof}

The set $\mathfrak X_F$ is naturally a subset of $\mathfrak X_{F_0}^F$.
We want to show that there is a bijection between $\mathfrak X_A$ and 
\begin{align*}
	\mathfrak S_{F_0}^F&:=\Bigl\{(M_g)_{g \in G_{\geq e}} \in \prod_{g \in G_{\geq e}} \mathfrak X_{F_0}^F \Bigm\vert  M_g \text{ satisfies }\\
	&\Bigl.\text{ \qquad the following conditions (i) through (iii)}\Bigr\}.
\end{align*}
\begin{enumerate}[(i)]
	\item $M_e \subseteq F_0$;
	\item $\mycl_F(M_g) \subseteq M_h$ whenever $e \leq g < h$ and $\overline{g}=\overline{h}$;
	\item $M_h=F$ whenever there exist $g \in G_{\geq e}$ and $u \in F\setminus\{0\}$ with $e \leq g<h$ and $\pm u \in M_g$.
\end{enumerate}

For that purpose, we first demonstrate the following proposition:
\begin{proposition}\label{prop:sigma_inverse}
	Let $A$ be a pseudo-valuation domain such that $2$ is a unit and any strict units admit a square root.
	The union $\bigcup_{g\in G_{\geq e}}\Psi(M_{g},g)$ is a quasi-quadratic $A$-module in $A$ for each $(M_g)_{g \in G_{\geq e}} \in \mathfrak S_{F_0}^F$.
\end{proposition}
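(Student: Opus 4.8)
The plan is to replace the union with a single transparent subset of $A$ and then check the two closure axioms against it. Write $\mathcal N=\bigcup_{g\in G_{\geq e}}\Psi(M_g,g)$. Closure under multiplication by squares is immediate: every $\Psi(M_g,g)$ is a quasi-quadratic $A$-module by Proposition \ref{prop:Psi}, so if $x\in\Psi(M_g,g)$ and $a\in A$, then $a^2x\in\Psi(M_g,g)\subseteq\mathcal N$; also $0\in\mathcal N$. All the work is in closure under addition, and since a union of quasi-quadratic modules need not be additively closed, this is precisely where the conditions (i)--(iii) defining $\mathfrak S_{F_0}^F$ are used.

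The first and main step is to establish the description
$$\mathcal N=\{0\}\cup\{z\in A\setminus\{0\}\mid \mylcp(z)\in M_{\myval(z)}\}.$$
The inclusion $\supseteq$ is immediate, since $\mylcp(z)\in M_{\myval(z)}$ places $z$ in $\Psi_1(M_{\myval(z)},\myval(z))$. For $\subseteq$, take a nonzero $z\in\Psi(M_g,g)$ and follow the two cases in Definition \ref{def:psi}. If $z\in\Psi_1(M_g,g)$, then either $\myval(z)=g$, giving $\mylcp(z)\in M_g=M_{\myval(z)}$, or $\myval(z)>g$ with $\overline{\myval(z)}=\overline g$, in which case condition (ii) yields $\mylcp(z)\in\mycl_F(M_g)\subseteq M_{\myval(z)}$. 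If $z\in\Psi_2(M_g,g)$, pick a witness $u$ with $\pm u\in\Psi_1(M_g,g)$ and $\myval(u)<\myval(z)$; applying the previous case to $u$ and to $-u$ (and using Definition \ref{def:pseudo}(2) for $\mylcp(-u)=-\mylcp(u)$) gives $\pm\mylcp(u)\in M_{\myval(u)}$ with $\mylcp(u)\in F\setminus\{0\}$ and $\myval(u)\in G_{\geq e}$, so condition (iii) forces $M_{\myval(z)}=F$ and hence $\mylcp(z)\in M_{\myval(z)}$.

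With this description in hand, additivity is a short case analysis. Let $x,y\in\mathcal N$ be nonzero with $x+y\neq0$; by symmetry assume $\myval(x)\leq\myval(y)$. If $\myval(x)<\myval(y)$, Definition \ref{def:pseudo}(4) gives $\myval(x+y)=\myval(x)$ and $\mylcp(x+y)=\mylcp(x)\in M_{\myval(x)}$, so $x+y\in\mathcal N$. If $\myval(x)=\myval(y)=:v$ and $\mylcp(x)+\mylcp(y)\neq0$, then $\myval(x+y)=v$ and $\mylcp(x+y)=\mylcp(x)+\mylcp(y)$ by Definition \ref{def:pseudo}(4), and this lies in $M_v$ because $M_v$ is a quasi-quadratic $F_0$-module and hence closed under addition. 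Finally, if $\myval(x)=\myval(y)=v$ and $\mylcp(x)+\mylcp(y)=0$, Lemma \ref{lem:lcp_basic2} gives $\myval(x+y)>v$, and since $\pm\mylcp(x)\in M_v$ with $\mylcp(x)\in F\setminus\{0\}$, condition (iii) gives $M_{\myval(x+y)}=F$, so again $x+y\in\mathcal N$.

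I expect the main obstacle to be the identification of $\mathcal N$ in the second step, which is the only place where the special structure of $\mathfrak S_{F_0}^F$ is genuinely needed: condition (ii) collapses the case $\myval(z)>g$ in the definition of $\Psi_1$, while condition (iii) absorbs $\Psi_2$ into $\Psi_1$. Within the additivity argument the delicate case is the cancellation $\mylcp(x)+\mylcp(y)=0$, where the valuation strictly increases so that the pseudo-angular component of the sum cannot be recovered from those of $x$ and $y$; there one must appeal to condition (iii) to ensure that the sum re-enters the union rather than escaping it.
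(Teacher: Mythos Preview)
Your proof is correct, but it takes a genuinely different route from the paper. The paper never identifies $\mathcal N$ with a closed-form set; instead it argues directly at the level of the pieces $\Psi(M_g,g)$: given $x_1\in\Psi(M_{g_1},g_1)$ and $x_2\in\Psi(M_{g_2},g_2)$ with $\myval(x_1)=\myval(x_2)$ and $g_1\neq g_2$, it shows one of the $x_i$ already lies in the other piece (if both are in $\Psi_1$, the equality $\overline{g_1}=\overline{\myval(x_1)}=\overline{\myval(x_2)}=\overline{g_2}$ lets condition~(ii) push $\mylcp(x_1)$ into $M_{g_2}$; if one is in $\Psi_2$, its witness $u$ serves as a $\Psi_2$-witness for the other). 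Then Proposition~\ref{prop:Psi} finishes. Your approach instead front-loads the work into the characterization $\mathcal N=\{0\}\cup\{z:\mylcp(z)\in M_{\myval(z)}\}$, which uses conditions (ii) and (iii), and then the additivity check becomes a clean three-line case split. What you gain is transparency and brevity in the additivity step; the characterization you prove is essentially what the paper establishes later inside the proof of Theorem~\ref{thm:wts1} (when showing $\sigma\circ\rho$ is the identity), so you are in effect moving that insight earlier. What the paper's argument buys is economy of hypotheses: it needs only condition~(ii), never invoking~(iii), whereas your route uses~(iii) twice (to absorb $\Psi_2$ and to handle the cancellation case $\mylcp(x)+\mylcp(y)=0$). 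Neither proof uses condition~(i).
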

\begin{proof}
	We have only to demonstrate that $\mathcal M=\bigcup_{g\in G_{\geq e}}\Psi(M_{g},g)$ is closed under addition and multiplication by the square of elements in $A$.
	The closedness under multiplication by the square immediately follows from Proposition \ref{prop:Psi}.
	
	We demonstrate the closedness under addition.
	Let $x_1,x_2$ be elements in $\mathcal M$.
	We prove that $x_1+x_2 \in \mathcal M$.
	It is obvious when at least one of $x_1$, $x_2$ and $x_1+x_2$ is zero.
	So we assume that they are nonzero elements.
	Take $g_1,g_2 \in G_{\geq e}$ with $x_i \in \Psi(M_{g_i},g_i)$ for $i=1,2$.
	When $\myval(x_1) \neq \myval(x_2)$, we may assume that $\myval(x_1)<\myval(x_2)$.
	We have $\myval(x_1+x_2)=\myval(x_1)$ and $\mylcp(x_1+x_2)=\mylcp(x_1)$ by Definition \ref{def:pseudo}(4).
	These equalities imply that $x_1+x_2 \in \Psi(M_{g_1},g_1)$.
	
	We next consider the case in which $\myval(x_1)=\myval(x_2)$.
	When $g_1=g_2$, we immediately get $x_1+x_2 \in  \Psi(M_{g_1},g_1)$ by Proposition \ref{prop:Psi}.
	Consider the case in which $g_1 \neq g_2$.
	We demonstrate that $x_i \in \Psi(M_{g_{3-i}},g_{3-i})$ for some $i=1,2$.
	If it is true, we obviously have $x_1+x_2 \in \Psi(M_{g_{3-i}},g_{3-i})$ by Proposition \ref{prop:Psi}.
	The proof has been completed.
	
	Set $h=\myval(x_1)=\myval(x_2)$.
	We first consider the case in which $x_1 \in \Psi_1(M_{g_1},g_1)$ and $x_2 \in \Psi_1(M_{g_2},g_2)$.
	We may assume that $g_1<g_2$ without loss of generality.
	Since $\mycl_F(M_{g_1}) \subseteq M_{g_2}$ by the assumption, we have $\mylcp(x_1) \in M_{g_2}$.
	It implies that $x_1 \in \Psi_1(M_{g_2},g_2) \subseteq \Psi(M_{g_2},g_2)$.
	The remaining case is the case in which at least one of $x_1 \in \Psi_2(M_{g_1},g_1)$ and $x_2 \in \Psi_2(M_{g_2},g_2)$ holds.
	We may assume that $x_1 \in \Psi_2(M_{g_1},g_1)$ without loss of generality.
	By the definition of $\Psi_2(M_{g_1},g_1)$, there exists $u \in \Psi_1(M_{g_1},g_1)$ with $\pm u \in \Psi_1(M_{g_1},g_1)\subseteq \Psi(M_{g_1},g_1)$ and $\myval(u)<h$.
	Recall that $\myval(x_2)=h$ by the assumption.
	We have $x_2 \in \Psi_2(M_{g_1},g_1) \subseteq \Psi(M_{g_1},g_1)$ using the definition of $\Psi_2(M_{g_1},g_1)$ again.
\end{proof}

We are now ready to demonstrate our main theorem.
\begin{theorem}\label{thm:wts1}
	Let $A$ be a pseudo-valuation domain such that $2$ is a unit and any strict units admit a square root.
	Consider a quasi-quadratic $A$-module $\mathcal M$ in $A$.
	We define the map 
	$\sigma:\mathfrak X_A \rightarrow \mathfrak  S_{F_0}^F$ by
	\begin{equation*}
		\sigma(\mathcal M)=(M_{g}(\mathcal M))_{g \in G_{\geq e}}.
	\end{equation*}
	The map $\sigma$ is a bijection.
\end{theorem}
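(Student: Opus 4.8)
\emph{Approach.} The plan is to write down an explicit candidate inverse and check that both composites are the identity. Given $(M_g)_{g \in G_{\geq e}} \in \mathfrak S_{F_0}^F$, set $\tau\bigl((M_g)_g\bigr) = \bigcup_{g \in G_{\geq e}} \Psi(M_g, g)$, which is a quasi-quadratic $A$-module in $A$ by Proposition \ref{prop:sigma_inverse}; thus $\tau : \mathfrak S_{F_0}^F \to \mathfrak X_A$. The proof then reduces to three assertions: (a) $\sigma$ actually takes values in $\mathfrak S_{F_0}^F$, i.e. $(M_g(\mathcal M))_{g}$ satisfies (i)--(iii) for every $\mathcal M \in \mathfrak X_A$; (b) $\tau \circ \sigma = \mathrm{id}_{\mathfrak X_A}$; and (c) $\sigma \circ \tau = \mathrm{id}_{\mathfrak S_{F_0}^F}$. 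Assertion (b) is precisely the content of Theorem \ref{thm:decomp}, so the real work lies in (a) and (c).

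\emph{Proof of (a).} Each $M_g(\mathcal M)$ is a quasi-quadratic $F_0$-module in $F$ by Proposition \ref{prop:M}. Condition (i) holds since any $x \in A \setminus \{0\}$ with $\myval(x) = e$ lies in $B^\times$, so $\mylcp(x) = \pi(x) = \pi_0(x) \in F_0$. For condition (ii), given $e \le g < h$ with $\overline g = \overline h$, write $h = g h_0^2$ with $h_0 \in G_{> e}$; Lemma \ref{lem:basic2} yields $F^2 c \subseteq M_h(\mathcal M)$ for every nonzero $c \in M_g(\mathcal M)$, and since $M_h(\mathcal M)$ is closed under addition, each $\sum_i u_i^2 v_i \in \mycl_F(M_g(\mathcal M))$ lies in $M_h(\mathcal M)$. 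For condition (iii), if $\pm u \in M_g(\mathcal M)$ with $u \ne 0$ and $e \le g < h$, choose by the definition of $M_g(\mathcal M)$ a nonzero $x \in \mathcal M$ with $\myval(x) = g$ and $\mylcp(x) = u$; then $\pm \mylcp(x) \in M_g(\mathcal M)$, and the ``furthermore'' part of Proposition \ref{supp1} gives $M_h(\mathcal M) = F$.

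\emph{Proof of (c).} Fix $(M_g)_g \in \mathfrak S_{F_0}^F$, put $\mathcal M = \tau\bigl((M_g)_g\bigr)$, and fix $g \in G_{\geq e}$; the goal is $M_g(\mathcal M) = M_g$. For $M_g \subseteq M_g(\mathcal M)$, take nonzero $c \in M_g$ and use Definition \ref{def:pseudo}(3) to get $w \in K$ with $\myval(w) = g$ and $\mylcp(w) = c$; since $g \ge e$ we have $w \in B$, and in fact $w \in A$ (if $\myval(w) > e$ then $w \in \mathfrak m$; if $\myval(w) = e$ then $\pi(w) = c \in M_e \subseteq F_0$, so Lemma \ref{lem:basic} applies), whence $w \in \Psi_1(M_g, g) \subseteq \mathcal M$ and $c \in M_g(\mathcal M)$. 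For the reverse inclusion, take nonzero $x \in \mathcal M$ with $\myval(x) = g$ and choose $h \in G_{\geq e}$ with $x \in \Psi(M_h, h)$. If $x \in \Psi_1(M_h, h)$: either $\myval(x) = h$, so $g = h$ and $\mylcp(x) \in M_h = M_g$; or $\myval(x) > h$, so $\overline g = \overline h$ with $h < g$, and condition (ii) gives $\mylcp(x) \in \mycl_F(M_h) \subseteq M_g$. If instead $x \in \Psi_2(M_h, h)$: there is $u$ with $\pm u \in \Psi_1(M_h, h)$ and $h \le \myval(u) < g$; unwinding the definition of $\Psi_1$ (invoking condition (ii) when $\myval(u) > h$) shows $\pm \mylcp(u) \in M_{\myval(u)}$ with $\mylcp(u) \ne 0$, so condition (iii) forces $M_g = F$, which of course contains $\mylcp(x)$. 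Hence $M_g(\mathcal M) = M_g$, as desired.

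\emph{Main obstacle.} The delicate point is the inclusion $M_g(\mathcal M) \subseteq M_g$ in step (c), and within it the case $x \in \Psi_2(M_h, h)$: one must verify carefully that the auxiliary element $u$ supplied by the definition of $\Psi_2$ has $\pm \mylcp(u)$ lying in the correct module $M_{\myval(u)}$, so that hypothesis (iii) of $\mathfrak S_{F_0}^F$ is genuinely applicable; this is exactly where conditions (ii) and (iii) interact. Everything else is routine bookkeeping with Propositions \ref{prop:M}, \ref{prop:Psi}, \ref{prop:sigma_inverse}, \ref{supp1}, Theorem \ref{thm:decomp}, and Lemmas \ref{lem:basic}, \ref{lem:basic2}, \ref{lem:basic5}.
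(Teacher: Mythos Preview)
Your proposal is correct and follows essentially the same route as the paper: you build the same explicit inverse $\tau$ (the paper calls it $\rho$), invoke Theorem~\ref{thm:decomp} for $\tau\circ\sigma=\mathrm{id}$, and verify $\sigma\circ\tau=\mathrm{id}$ by the same case analysis on $\Psi_1$ versus $\Psi_2$, using conditions (ii) and (iii) exactly as the paper does. The only cosmetic difference is the order in which you present the two inclusions in step~(c).
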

\begin{proof}
	We first show that $\sigma$ is well-defined. 
	We have to check that the family $\sigma(\mathcal M)$ satisfies the conditions (i) through (iii) in the definition of $\mathfrak  S_{F_0}^F$.
	Satisfaction of condition (i) immediately follows from Definition \ref{def:pseudo}(1), the definition of $M_e(\mathcal M)$ and the fact that $\pi(A) =F_0$.
	We consider the condition (ii).
	By Proposition \ref{prop:M}, $M_g(\mathcal M)$ is a
	quasi-quadratic $F_0$-module in $F$ for any $g\in G_{\geq e}$.
	Take elements $g, h\in G_{\geq e}$ with $g<h$ and $\overline{g}=\overline{h}$.
	There exists an element $h_1\in G_{>e}$ such that $h=gh_1^2$.
	It follows from Lemma \ref{lem:basic2} that $F^2c \subseteq M_{gh_1^2}(\mathcal M)=M_h(\mathcal M)$ 
	for any $c\in M_g(\mathcal M)$.
	This implies that $\mycl_F(M_g(\mathcal M))\subseteq M_h(\mathcal M)$. 
	Next we check that $\sigma(\mathcal M)$ satisfies the condition (iii).
	Assume that there exist $g,h\in G_{\geq e}$ and $u\in F$ with $g<h$ and $\pm u\in M_g(\mathcal M)$.
	There exists an element $x\in\mathcal M$ such that $\mylcp(x)=u$ and $\myval(x)=g$. Since $\pm\mylcp(x)=\pm u\in M_g(\mathcal M)$,
	we have $M_h(\mathcal M)=F$ from Proposition \ref{supp1}, as desired.
	
	%Take a nonzero element $c\in F$. By Definition \ref{def:pseudo}(3), there exists an element $w\in K$ with
	%$\mylcp(w)=c$ and $\myval(w)=h$. Since $\myval(w)>e$, we have $w\in \mathfrak m\subset A$.
	%We can take a nonzero element $y\in \mathcal M$ with $\mylcp(y)=u$ and $\myval(y)=g$ because
	%$u\in M_g(\mathcal M)$. 
	%Since $\mylcp(-y)=-u\in M_g(\mathcal M)$, we obtain a nonzero element $z\in \mathcal M$ with
	%$\mylcp(-y)=\mylcp(z)$ and $\myval(-y)=\myval(z)$. 
	%Hence we have $ \pm y\in \mathcal M$ by Lemma \ref{lem:basic5}.
	%By Lemma \ref{lem:basic3}, we see that $w\in \mathcal M$.
	%This implies that $c=\mylcp(w)\in M_h(\mathcal M)$, as desired.
	
	We define the map $\rho:\mathfrak S_{F_0}^F \rightarrow \mathfrak X_{A}$ by
	\begin{equation*}
		\rho\left((M_{g})_{g\in G_{\geq e}}\right)
		=\bigcup_{g\in G_{\geq e}}\Psi(M_{g},g)\text{.}
	\end{equation*}
	The map $\rho$ is well-defined by Proposition \ref{prop:sigma_inverse}.
	The composition $\rho \circ \sigma$ is the identity map by Theorem \ref{thm:decomp}.
	We demonstrate that $\sigma \circ \rho$ is also the identity map.
	Fix $g \in G_{\geq e}$.
	Let $N$ be the $g$-th coordinate of $\sigma \circ \rho\left((M_g)_{g\in G_{\geq e}}\right)$.
	We want to show that $N=M_{g}$.
	We first demonstrate the inclusion $N \subseteq M_g$.
	We have 
	\begin{align*}
			N &= M_{g}\left(\bigcup_{h\in G_{\geq e}}\Psi(M_{h}, h)\right)\\
			&=\Bigl\{\mylcp(x)\;\vert\; x \in \bigcup_{h\in G_{\geq e}}\Psi(M_{h},h)
			\setminus\{0\}
			%\Bigr.\\
			%& \Bigl.
			\text{ and } \myval(x)=g\Bigr\} \cup \{0\}.
	\end{align*}
	Take an arbitrary nonzero element $x \in A$ with $\mylcp(x) \in N$ and $\myval(x)=g$.
	We also take $h \in G_{\geq e}$ such that $x \in \Psi(M_{h},h)$.
	If $x \in \Psi_2(M_{h},h)$, there exists $y\in\Psi_1(M_h,h)$ with
	$\pm y \in\Psi_1(M_h,h)$ and $\myval(y)<\myval(x)$. We first consider the case in which
	$\myval(y)=h$. It follows that $\pm\mylcp(y)\in M_h$.
	Since $h=\myval(y)<\myval(x)=g$, we have $M_g=F$ from the condition (iii)
	in the definition of $\mathfrak{S}_{F_0}^F$.
	The inclusion $N \subseteq M_g$ is obvious in this case.
	Next  we demonstrate the remaining case in which $\myval(y)>h$.
	Recall that we have $\overline{\myval(y)}=\overline{h}$ from the definition of $\Psi_1(M_h,h)$.
	We get $\pm\mylcp(y)\in\mycl_F(M_h)\subseteq M_{\myval(y)}$ by the condition (ii).
	It together with $\myval(y)<g$ and the condition (iii) implies that $M_g=F$, as desired.
	When $x \in \Psi_1(M_{h},h)$, we have $\mylcp(x) \in M_h=M_g$ if $g=h$ and $\mylcp(x) \in \mycl_F(M_h) \subseteq M_g$ otherwise.
	%If $x \in \Psi_2(M_{h},h)$, we have $M_g=F$ by Lemma \ref{lem:basic3}.
	%The inclusion $N \subseteq M_g$ is obvious in this case.
	%When $x \in \Psi_1(M_{h},h)$, we have $\mylcp(x) \in M_h=M_g$ if $g=h$ and $\mylcp(x) \in \mycl_F(M_h) \subseteq M_g$ otherwise.
	
	We demonstrate the opposite inclusion.
	Take a nonzero element $c \in M_{g}$.
	There exists a nonzero element $w \in B$ with $\myval(w)=g$ and $\mylcp(w)=c$ by Definition \ref{def:pseudo}(3). 
	We have $\pi(w)=c \in M_e \subseteq F_0$ by the condition (i) when $g=e$ and $\pi(w)=0 \in F_0$ otherwise.
	They imply that $w \in A$ by Lemma \ref{lem:basic}.
	Since $w\in\Psi(M_g,g)$, we have $c \in N$.
	We have finished to prove that $\sigma$ and $\rho$ are the inverses of the others.
\end{proof}

We next consider the intersection and the sum of two quasi-quadratic modules.
\begin{theorem}\label{thm:wts2}
	Let $A$ be a pseudo-valuation domain such that $2$ is a unit and any strict units admit a square root.
	Consider quasi-quadratic $A$-modules $\mathcal M_1$ and $\mathcal M_2$ in $A$.
	Set $M_{1,g}=M_g(\mathcal M_1)$ and $M_{2,g}=M_g(\mathcal M_2)$ for $g \in G_{\geq e}$.
	For each $g \in G_{\geq e}$, the following equalities hold true:
	\begin{enumerate}
		\item[(1)] $M_g(\mathcal M_1 \cap \mathcal M_2)=M_{1,g} \cap M_{2,g}$.\medskip
		\item[(2)] $M_g(\mathcal M_1+\mathcal M_2)$
						\begin{align*}
						%		&M_g(\mathcal M_1+\mathcal M_2)\\
								&= \left\{\begin{array}{ll}
											F & \text{when } \exists h \in G_{\geq e}  \text{with } h<g \\
											& \quad \text{and } M_{1,h} \cap (-M_{2,h}) \neq \{0\},\\
											M_{1,g}+M_{2,g} & \text{elsewhere.}
										\end{array}\right.
								\end{align*}
	\end{enumerate}
\end{theorem}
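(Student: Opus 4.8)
The plan is to establish the two equalities separately; both reduce to comparing pseudo-angular components at the fixed value $g$, using Lemma \ref{lem:basic5} for part (1) and Definition \ref{def:pseudo}(4) together with Lemma \ref{lem:lcp_basic2} for part (2). For (1), since $\mathcal M_1 \cap \mathcal M_2 \subseteq \mathcal M_i$, the inclusion $M_g(\mathcal M_1 \cap \mathcal M_2) \subseteq M_{1,g} \cap M_{2,g}$ is immediate from the definition of $M_g$. For the converse I would take a nonzero $c \in M_{1,g} \cap M_{2,g}$ and choose $x_i \in \mathcal M_i$ with $\myval(x_i) = g$ and $\mylcp(x_i) = c$; then Lemma \ref{lem:basic5} applied with $\mathcal M = \mathcal M_1$, $x = x_1$, $y = x_2$ gives $x_2 \in \mathcal M_1$, hence $x_2 \in \mathcal M_1 \cap \mathcal M_2$, and since $\myval(x_2) = g$ and $\mylcp(x_2) = c$ we get $c \in M_g(\mathcal M_1 \cap \mathcal M_2)$. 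As $0$ lies in all the sets involved, (1) follows.

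For (2), I would first record that $\mathcal M_1 + \mathcal M_2 = \{m_1 + m_2 \mid m_i \in \mathcal M_i\}$ is again a quasi-quadratic $A$-module in $A$ (closedness under addition and under multiplication by squares being immediate), and that it contains each $\mathcal M_i$; hence $M_{i,g} \subseteq M_g(\mathcal M_1 + \mathcal M_2)$ for all $g$, and since the right-hand side is closed under addition by Proposition \ref{prop:M}, we get $M_{1,g} + M_{2,g} \subseteq M_g(\mathcal M_1 + \mathcal M_2)$ unconditionally. This already settles the ``elsewhere'' inclusion in one direction and the ``$\supseteq$'' part of the $F$-case.

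Next I would treat the first case of the formula: suppose there is $h \in G_{\geq e}$ with $h < g$ and a nonzero $c \in M_{1,h} \cap (-M_{2,h})$. Choose $x_1 \in \mathcal M_1$ with $\myval(x_1) = h$, $\mylcp(x_1) = c$, and $x_2 \in \mathcal M_2$ with $\myval(x_2) = h$, $\mylcp(x_2) = -c$. Both $x_1$ and $x_2$ lie in $\mathcal M_1 + \mathcal M_2$, so $\pm c \in M_h(\mathcal M_1 + \mathcal M_2)$; the ``furthermore'' part of Proposition \ref{supp1}, applied with the element $x_1$ and the inequality $h < g$, then yields $M_g(\mathcal M_1 + \mathcal M_2) = F$.

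Finally, in the ``elsewhere'' case we have $M_{1,h} \cap (-M_{2,h}) = \{0\}$ for every $h \in G_{\geq e}$ with $h < g$, and it remains to prove $M_g(\mathcal M_1 + \mathcal M_2) \subseteq M_{1,g} + M_{2,g}$. I would take a nonzero $c = \mylcp(z)$ in the left-hand side, write $z = z_1 + z_2$ with $z_i \in \mathcal M_i$, and recall $\myval(z) = g$. If some $z_i = 0$, or if $\myval(z_1) \neq \myval(z_2)$, then Definition \ref{def:pseudo}(4) places $c$ in $M_{1,g}$ or $M_{2,g}$. If $\myval(z_1) = \myval(z_2) = h'$ with $\mylcp(z_1) + \mylcp(z_2) \neq 0$, then Definition \ref{def:pseudo}(4) gives $h' = g$ and $c = \mylcp(z_1) + \mylcp(z_2) \in M_{1,g} + M_{2,g}$. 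The remaining subcase $\mylcp(z_1) = -\mylcp(z_2) \neq 0$ is exactly where the standing hypothesis enters: then $\mylcp(z_1) \in M_{1,h'} \cap (-M_{2,h'})$ is nonzero, while Lemma \ref{lem:lcp_basic2} forces $g = \myval(z) > h'$, contradicting the hypothesis, so this subcase cannot occur. I expect this last subcase — invoking Lemma \ref{lem:lcp_basic2} to produce the inequality $h' < g$ and thereby contradict the ``elsewhere'' condition — to be the only genuinely delicate point; everything else is routine bookkeeping with Definition \ref{def:pseudo}(4).
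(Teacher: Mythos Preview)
Your proof is correct and follows essentially the same approach as the paper's: Lemma~\ref{lem:basic5} for the reverse inclusion in (1), Proposition~\ref{supp1} for the $F$-case in (2), and the case split on $\myval(z_1),\myval(z_2)$ together with Definition~\ref{def:pseudo}(4) and Lemma~\ref{lem:lcp_basic2} for the ``elsewhere'' inclusion. The only cosmetic differences are that in the $F$-case you observe directly that $x_1,x_2\in\mathcal M_1+\mathcal M_2$ give $\pm c\in M_h(\mathcal M_1+\mathcal M_2)$ (the paper routes through Lemma~\ref{lem:basic5} to get $-w_2\in\mathcal M_1$ first), and in the ``elsewhere'' case the paper normalizes to $\myval(w_1)\le\myval(w_2)$ and argues $h=g$ by contradiction before splitting, whereas you split into cases from the outset.
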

\begin{proof}
	(1) 
	The inclusion $M_g(\mathcal M_1 \cap \mathcal M_2) \subseteq M_{1,g} \cap M_{2,g}$ is obvious.
	We demonstrate the opposite inclusion.
	Take an arbitrary nonzero element $a \in M_{1,g} \cap M_{2,g}$.
	Since $a \in M_{i,g}$, there exists $w_i \in \mathcal M_i$ such that $\myval(w_i)=g$ and $\mylcp(w_i)=a$ for $i=1,2$.
	Lemma \ref{lem:basic5} implies that $w_2$ is an element of $\mathcal M_1$.
	We have $a \in M_g(\mathcal M_1 \cap \mathcal M_2)$ because $w_2 \in \mathcal M_1 \cap \mathcal M_2$.
	
	(2)
	We first consider the case in which there exists $h \in G_{\geq e}$ with $h<g$ and $M_{1,h} \cap (-M_{2,h}) \neq \{0\}$.
	Take a nonzero element $a \in M_{1,h} \cap (-M_{2,h}) $.
	There exist $w_1 \in \mathcal M_1$ and $w_2 \in \mathcal M_2$ with $\myval(w_1)=\myval(w_2)=h$ and $\mylcp(w_1)=-\mylcp(w_2)=a$.
	By Lemma \ref{lem:basic5}, we get $-w_2  \in \mathcal M_1$.
	It implies that $\pm w_2$ belongs to $\mathcal M_1+\mathcal M_2$.
	This means that $\pm w_2\in \mathcal M_1+\mathcal M_2$. Thus it follows from Proposition \ref{supp1} that
	$M_g(\mathcal M_1+\mathcal M_2)=F$.
	%	Now we apply Lemma \ref{lem:basic3}.
	%	Any element $x$ in $A$ with $\myval(x)=g$ is contained in $\mathcal M_1+\mathcal M_2$.
	%	On the other hand, for any nonzero $c \in F$, there exists an element $y \in B$ with $\myval(y)=g$ 
	%	and $\mylcp(y)=c$ by Definition \ref{def:pseudo}(3).
	%	We have $y \in A$ by Lemma \ref{lem:basic}.
	%	It implies that $M_g(\mathcal M_1+\mathcal M_2)=F$.
	
	We next consider the remaining case.
	The inclusion $M_{1,g}+M_{2,g} \subseteq M_g(\mathcal M_1+\mathcal M_2)$  immediately follows from Definition \ref{def:pseudo}(4).
	We demonstrate the opposite inclusion.
	Take a nonzero element $a \in M_g(\mathcal M_1+\mathcal M_2)$ and an element $w \in \mathcal M_1+\mathcal M_2$ with $\myval(w)=g$ and $\mylcp(w)=a$ as usual.
	There exist $w_1 \in \mathcal M_1$ and $w_2 \in \mathcal M_2$ with $w=w_1+w_2$.
	It is obvious that $a \in M_{1,g}+M_{2,g}$ when either $w_1=0$ or $w_2=0$.
	Therefore, we assume that both $w_1$ and $w_2$ are nonzero.
	
	We may assume that $\myval(w_1) \leq \myval(w_2)$ without loss of generality. Set $h=\myval(w_1)$.
	We have $g=\myval(w_1+w_2)\geq\min(\myval(w_1), \myval(w_2))=h$. 
	Assume for contradiction that $h<g$.
	We have $\myval(w_1)=\myval(w_2)$ because $\myval(w_1+w_2)>\myval(w_1)$.
	We also have $\mylcp(w_2)=-\mylcp(w_1)$ by Definition \ref{def:pseudo}(4).
	We get $-w_1 \in \mathcal M_2$ by Lemma \ref{lem:basic5}.
	We have $\mylcp(w_1) \in M_{1,h} \cap (-M_{2,h})$ and it is a contradiction to the assumption that $M_{1,h} \cap (-M_{2,h})=\{0\}$.
	We have demonstrated that $h=g$.
	We have either $\myval(w_1)<\myval(w_2)$ or $\myval(w_1)=\myval(w_2)$.
	In the former case, we obviously have $a=\mylcp(w)=\mylcp(w_1+w_2)=\mylcp(w_1) \in M_{1,g} \subseteq M_{1,g}+M_{2,g}$ by Definition \ref{def:pseudo}(4).
	In the latter case, we have $\mylcp(w_1)+\mylcp(w_2) \neq 0$ by Lemma \ref{lem:lcp_basic2} and the equality $\myval(w_1)=\myval(w_1+w_2)$.
	We have $a=\mylcp(w)=\mylcp(w_1)+\mylcp(w_2) \in M_{1,g}+M_{2,g}$ by Definition \ref{def:pseudo}(4).
\end{proof}

\section{Special cases.}\label{sec:example}

\subsection{Examples.}\label{subsec:example}

We consider the correspondence between quasi-quadratic modules and cones in some special cases.

\begin{example}\label{ex:ex1}
	Let $(E,<)$ be a Euclidean field, which is an ordered field in which a positive element is a square.
	Let $V$ be an $E$-vector space.
	A subset $S$ of $V$ is \textit{convex} if $\lambda x + (1-\lambda)y \in S$ whenever $x,y \in S$ and $\lambda \in E$ with $0 \leq \lambda \leq 1$.
	A subset $S$ of $V$ is a \textit{ray} if $ax \in S$ whenever $a$ is a nonnegative element of $E$ and $x \in S$.
	
	A subset $Q$ of $V$ is a quasi-quadratic $E$-module in $V$ if and only if it is a convex ray.
	It is obvious that $\sum E^2=E_{\geq 0}$.
	Using this fact, the proof of the above equivalence is not so difficult.
	
	In fact, let $Q$ be a quasi-quadratic $E$-module in $V$. We first show that $Q$ is a ray. Take a positive element $a\in E$ and an element $x\in Q$.
	There exists an element $b\in E$ such that $a=b^2$. Since $Q$ is a quasi-quadratic $E$-module of $V$, we have
	$ax\in Q$. Next we show that $Q$ is convex. Take elements $x,y\in Q$. 
	We have $1\cdot x+0\cdot y=x\in Q$ and $0\cdot x+1\cdot y=y\in Q$. Take an element $\lambda\in E$ with $0<\lambda<1$. 
	Since $\lambda\in E^2$ and $1-\lambda \in E^2$, we get $\lambda x\in Q$ and $(1-\lambda)y\in Q$. 
	Hence we have $\lambda x+(1-\lambda)y\in Q$.
	
	We demonstrate the opposite implication. We assume that $Q$ is a convex ray. Take elements $x,y\in Q$.
	It follows that $a^2 x\in Q$ for any $a\in E$ because $a^2$ is nonnegative.
	Since $Q$ is convex and $0<\dfrac{1}{2}<1$ in $E$, we get $\dfrac{1}{2}x+\Bigl(1-\dfrac{1}{2}\Bigl)y=\dfrac{x+y}{2}\in Q$.
	Thus we have $x+y\in Q$ because the element $2$ is positive in $E$ and $Q$ is a ray. 
\end{example}

Let us recall a fundamental example introduced in Section 1.

\begin{example}\label{ex:ex2}
	We consider the pseudo-valuation domain $$A=\left\{\left.\sum_{l=0}^\infty a_lx^l \in \mathbb C[\![X]\!]\;\right\vert\; a_0 \in \mathbb R\right\}.$$
	We have $B=\mathbb C[\![X]\!]$, $F=\mathbb C$, $F_0=\mathbb R$ and $G=(\mathbb Z,+)$ in this case.
	The pseudo-angular component map $\mylcp$ is defined by taking the coefficient of the term of minimum degree.
	Since $\mathbb C$ is algebraically closed, any element is the square of some element.
	Therefore, quasi-quadratic $\mathbb C$-modules in $\mathbb C$ are trivial ones, that is, $\{0\}$ and $\mathbb C$.
	Under the identification of $\mathbb C$ with $\mathbb R\times \mathbb R$, only three types of quasi-quadratic $\mathbb R$-modules in $\mathbb C$ are possible by Example \ref{ex:ex1}.
	\begin{enumerate}[(i)]
		\item Trivial quasi-quadratic modules, that is, $\{0\}$ and $\mathbb C$;
		\item Lines. For any $z \in \mathbb C$ with $z \neq 0$, set $M_{\text{line}}(z)=\{tz \in \mathbb C\;\vert\; t \in \mathbb R\}$;
		\item Convex fans. For any $\theta_1,\theta_2$ with $0 \leq \theta_2 \leq \pi$, set
				\begin{align*}
						&M_{\text{fan,oo}}(\theta_1,\theta_2)=\{0\} \cup \{re^{i\theta} \in \mathbb C\;\vert\; r>0,
						\theta_1 < \theta < \theta_1+\theta_2\}, \\
						&M_{\text{fan,oc}}(\theta_1,\theta_2)=\{0\} \cup \{re^{i\theta} \in \mathbb C\;\vert\; r>0,
						\theta_1 < \theta \leq \theta_1+\theta_2\}, \\
						&M_{\text{fan,co}}(\theta_1,\theta_2)=\{0\} \cup \{re^{i\theta} \in \mathbb C\;\vert\; r>0,
						 \theta_1 \leq \theta < \theta_1+\theta_2\}, \\
						&M_{\text{fan,cc}}(\theta_1,\theta_2)=\{0\} \cup \{re^{i\theta} \in \mathbb C\;\vert\; r>0,
						 \theta_1 \leq \theta \leq \theta_1+\theta_2\}.
					\end{align*} 
		The notation $i$ denotes the imaginary unit.
		The subscript `o' means open, and `c' means closed.
		The case in which $\theta_2=0$ is allowed only for $M_{\text{fan,cc}}(\theta_1,\theta_2)$.
	\end{enumerate}

	\begin{figure}[h]
			\includegraphics[keepaspectratio, scale=0.12]{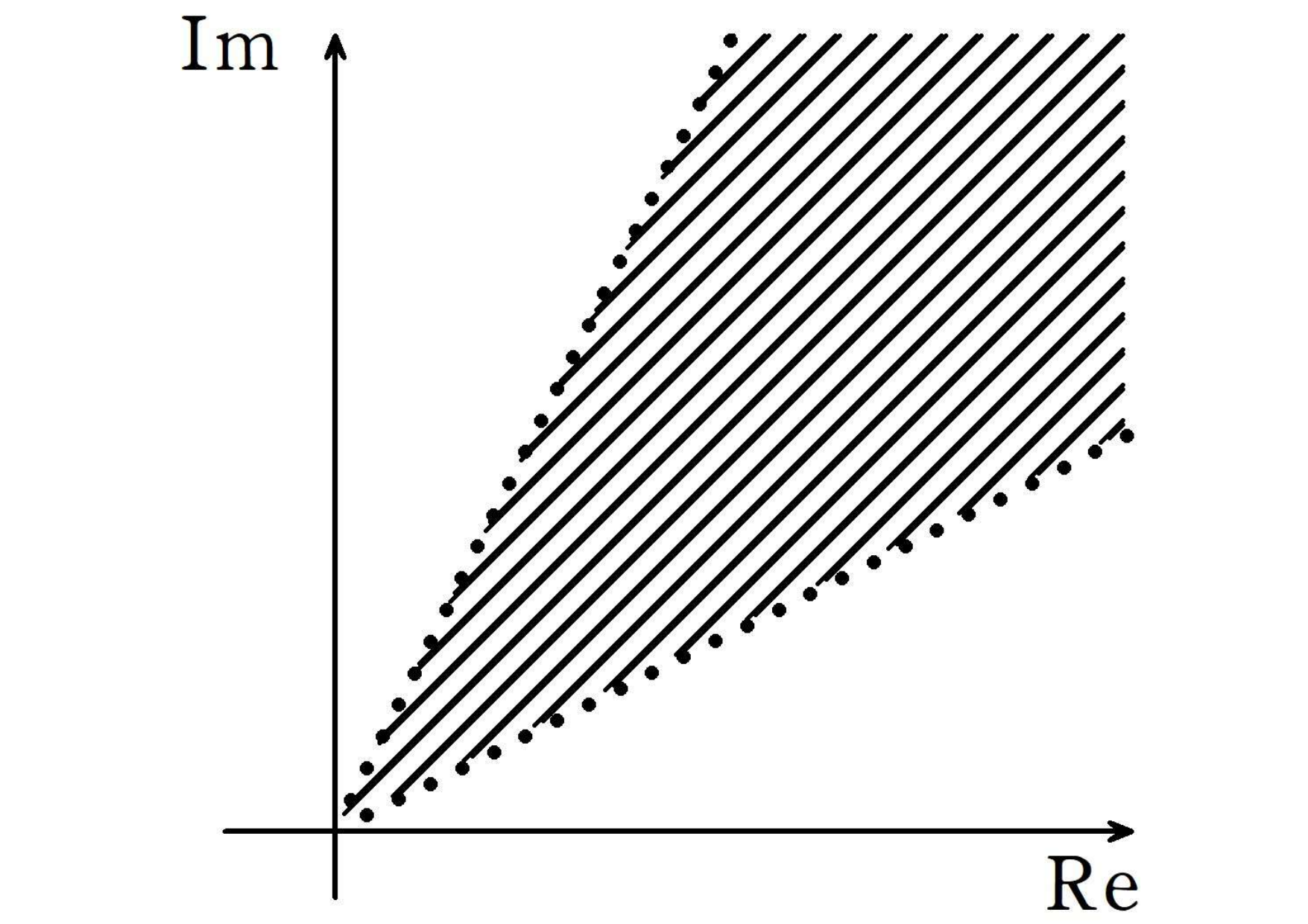}\!\!\!\!\!\!\!\!\!\!\!\!
			\includegraphics[keepaspectratio, scale=0.12]{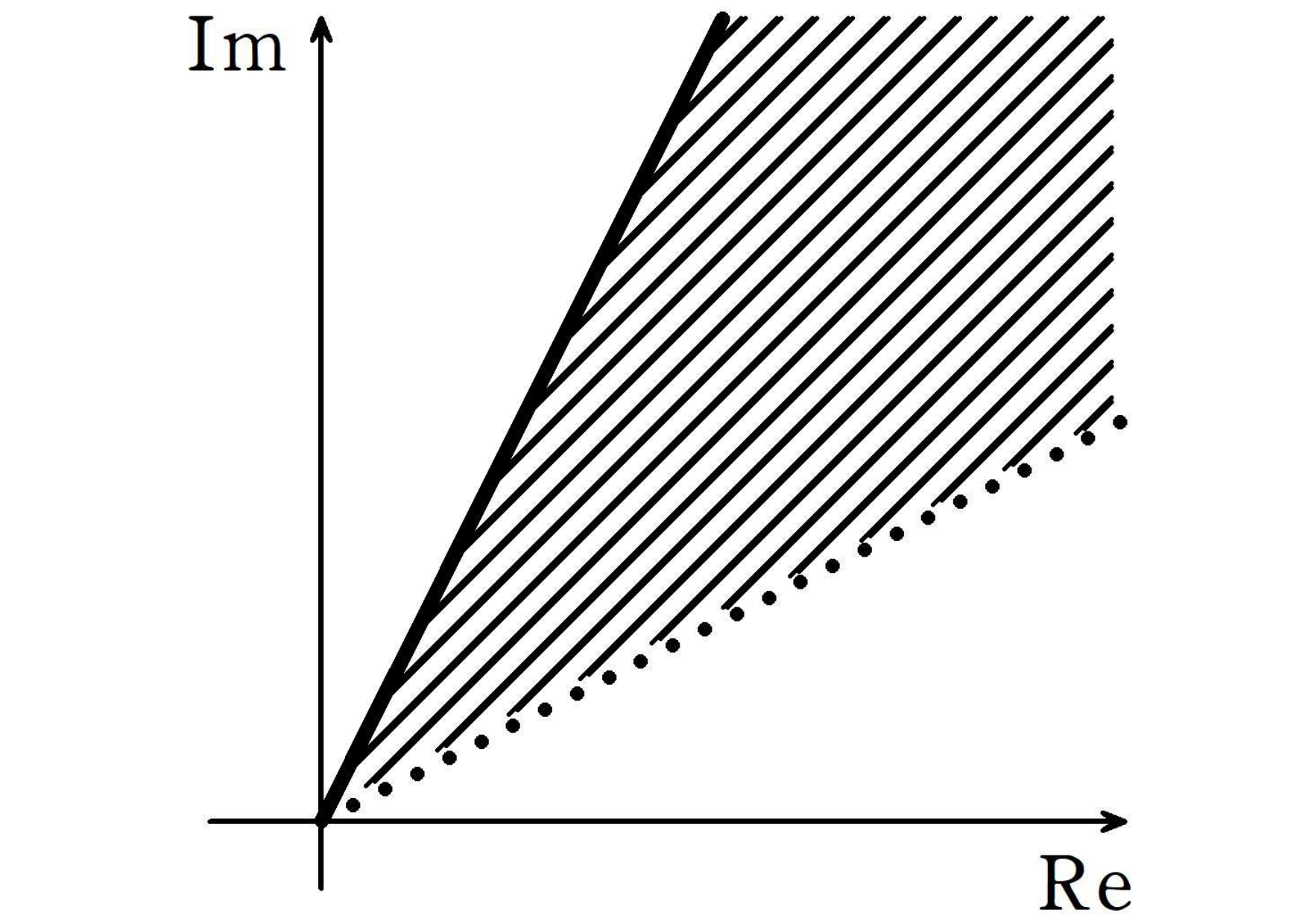}\\%\!\!\!\!\!\!\!\!\!\!\!\!
			\includegraphics[keepaspectratio, scale=0.12]{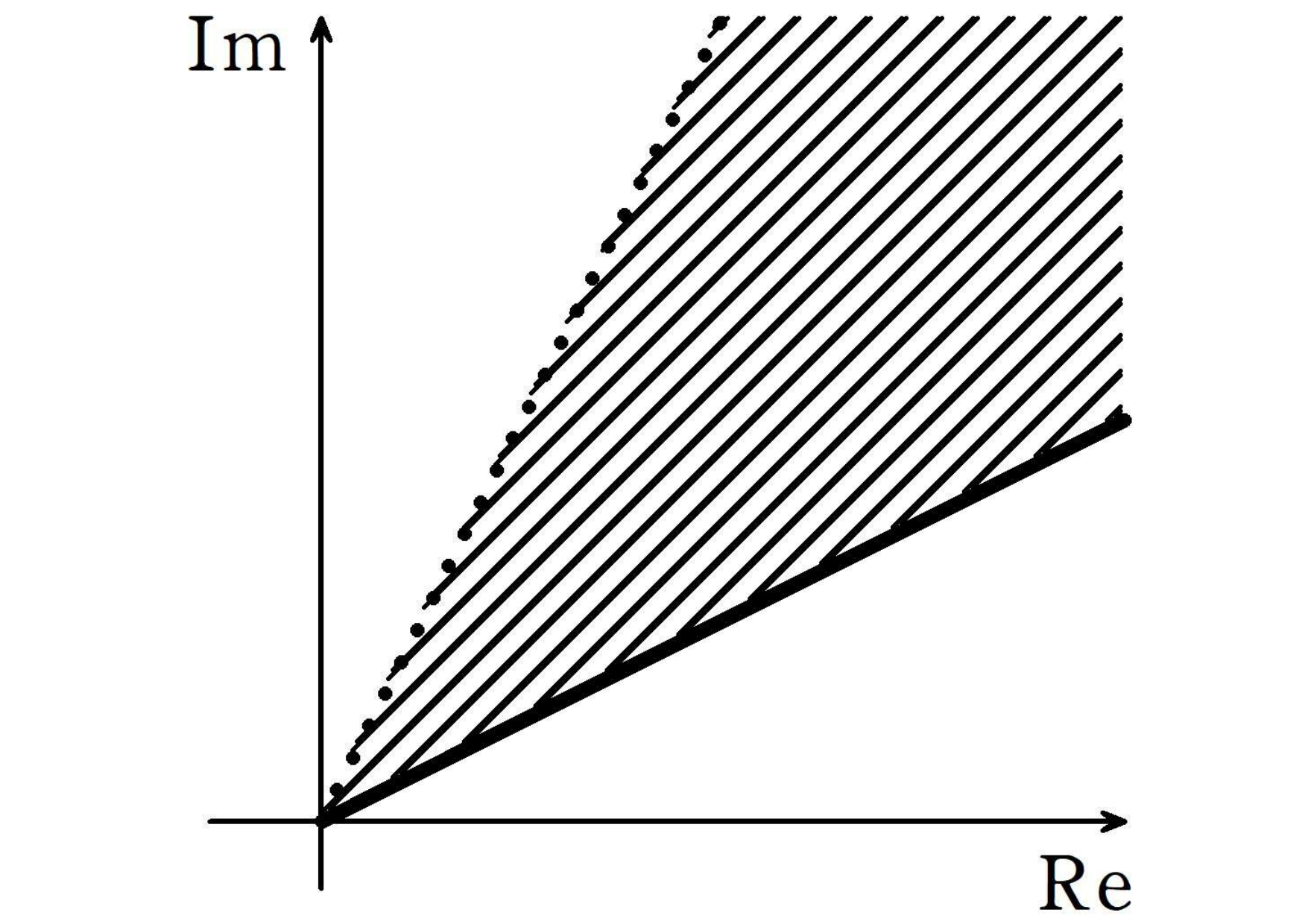}\!\!\!\!\!\!\!\!\!\!\!\!
			\includegraphics[keepaspectratio, scale=0.12]{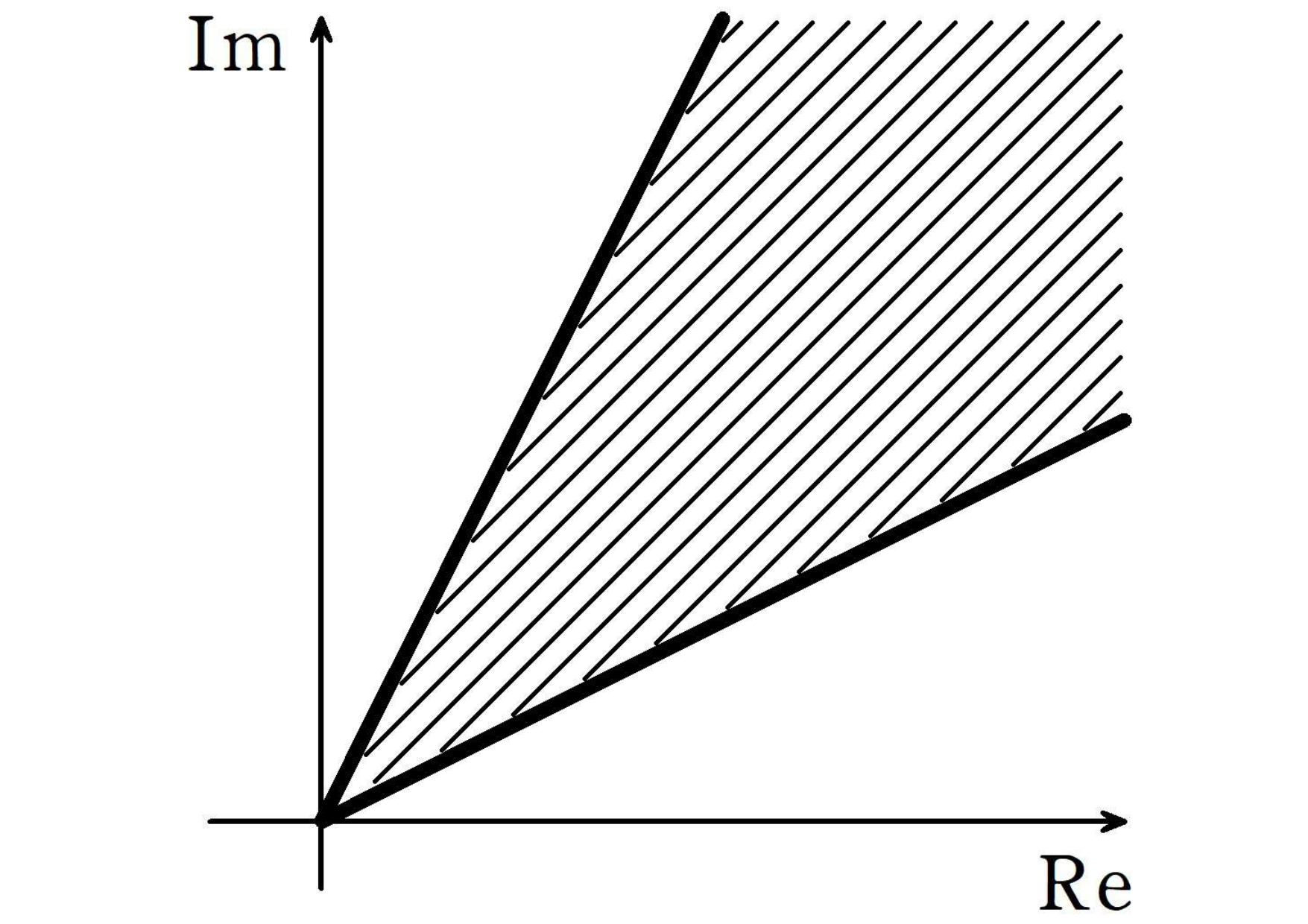}
			\caption{
					$M_{\text{fan, oo}}(\theta_1,\theta_2)$,
					$M_{\text{fan, oc}}(\theta_1,\theta_2)$,
					$M_{\text{fan, co}}(\theta_1,\theta_2)$ and
					$M_{\text{fan, cc}}(\theta_1,\theta_2)$.
				}
		\end{figure}
	
	Let $\mathcal M$ be a nonzero quasi-quadratic $A$-module in $A$.
	Set $m=\min\myval(\mathcal M)$.
	Since $\mycl_{\mathbb C}(M)=\mathbb C$ for any nonzero quasi-quadratic $\mathbb R$-module $M$ in $\mathbb C$, by Theorem \ref{thm:decomp}, the quasi-quadratic $A$-module $\mathcal M$ in $A$ is one of the following forms:
		\begin{enumerate}
				\item[(a)] When $M_m(\mathcal M)$ is either $\mathbb C$, a line (in the case (ii)) or $M_{\text{fan,cc}}(\theta,\pi)$ for some $\theta \in \mathbb R$, we have $M_n(\mathcal M)=\mathbb C$ for all $n>m$ and
				\begin{align*}
						&\mathcal M=\{x \in A\;\vert\; (\myval(x)=m \text{ and }
						\mylcp(x) \in M_m(\mathcal M))  \text{ or } \myval(x)>m\}.
					\end{align*}
				\item[(b)] When $M_m(\mathcal M)$ is one of the forms in (iii) which is not treated in (a), $M_{m+1}(\mathcal M)$ is possibly any one of (i) through (iii), $M_n(\mathcal M)=\mathbb C$ for all $n>m+1$ and
				\begin{align*}
						&\mathcal M=\{x \in A\;\vert\;
						 (\myval(x)=m \text{ and } \mylcp(x) \in M_m(\mathcal M)) \\
						&  \qquad\text{ or } (\myval(x)=m+1 \text{ and } \mylcp(x) \in M_{m+1}(\mathcal M)) \\
						&  \qquad\text{ or } \myval(x)>m+1\}.
					\end{align*}
			\end{enumerate}
	
%	\begin{enumerate}
%		\item[(a)] When $M_m(\mathcal M)$ is either $\mathbb C$, a line (in the case (ii)) or $M_{\text{fan,cc}}(\theta,\pi)$ for some $\theta \in \mathbb R$, we have $M_n(\mathcal M)=\mathbb C$ for all $n>m$ and
%		\begin{align*}
%			&\mathcal M=\{x \in A\;\vert\; (\myval(x)=m \text{ and } \\
%			& \qquad \mylcp(x) \in M_m(\mathcal M))  \text{ or } \myval(x)>m\}.
%		\end{align*}
%		\item[(b)] Otherwise, $M_{m+1}(\mathcal M)$ is possibly any one of (i) through (iii), $M_n(\mathcal M)=\mathbb C$ for all $n>m+1$ and
%		\begin{align*}
%			&\mathcal M=\{x \in A\;\vert\; \\
%		& \quad (\myval(x)=m \text{ and } \mylcp(x) \in M_m(\mathcal M)) \\
%		&  \text{ or } (\myval(x)=m+1 \text{ and } \mylcp(x) \in M_{m+1}(\mathcal M)) \\
%		&  \text{ or } \myval(x)>m+1\}.
%		\end{align*}
%	\end{enumerate}
%	
\end{example}

\subsection{Finitely generated quasi-quadratic modules of the pseudo-valuation ring.}
\label{subsec:finite-qq-mod}

In this subsection, we only consider the pseudo-valuation ring
$$A=\left\{\left.\sum_{l=0}^\infty a_lx^l \in \mathbb C[\![X]\!]\;\right\vert\; a_0 \in \mathbb R\right\}.$$
We give a necessary and sufficient condition for a quasi-quadratic module of the pseudo-valuation ring $A$
to be finitely generated as a quasi-quadratic $A$-module.

\begin{lemma}\label{fg_val_ring}
	Let $A$ be as above. Then the valuation ring $\mathbb{C}[\![X]\!]$ of $\mathbb{C}(\!(X)\!)$ is
	finitely generated  as a quasi-quadratic $A$-module.
\end{lemma}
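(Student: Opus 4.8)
The plan is to exhibit a finite subset of $B=\mathbb C[\![X]\!]$ whose quasi-quadratic closure (as an $A$-module in $B$) is all of $B$. Concretely I would take $S=\{1,-1,i,-i\}$, where each complex number is regarded as a constant power series and $i$ is the imaginary unit, and set $\mathcal M=\mycl_A(S;B)$; the goal is to prove $\mathcal M=B$. Throughout, recall that $\myval$ is the $X$-adic valuation, $G=(\mathbb Z,+)$, and $\mathfrak m=X\mathbb C[\![X]\!]$ is simultaneously the maximal ideal of $A$ and of $B$, while $2\in A^\times$ since $1/2$ has real constant term.

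First I would check that $\mathfrak m\subseteq\mathcal M$. For $x\in\mathfrak m$ we have $\myval(x)>0=\myval(1)$; moreover $x\in\mathfrak m\subseteq A$, and the elements $(x+1)/2$ and $(x-1)/2$ lie in $A$, their constant terms being $\pm 1/2\in\mathbb R$. The identity
$$x=\Bigl(\dfrac{x+1}{2}\Bigr)^2\cdot 1+\Bigl(\dfrac{x-1}{2}\Bigr)^2\cdot(-1)$$
then shows $x\in\mathcal M$. This is the computation of Lemma~\ref{lem:basic3}; that lemma is stated for quasi-quadratic $A$-modules contained in $A$, but the one-line argument applies verbatim with $\mathcal M\subseteq B$.

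Next I would show $\mathbb C\subseteq\mathcal M$, that is, every constant power series lies in $\mathcal M$. Write $c=p+qi$ with $p,q\in\mathbb R$, put $\alpha=\sqrt{|p|}$ and $\beta=\sqrt{|q|}$, which lie in $\mathbb R\subseteq A$, and choose $\varepsilon_1\in\{1,-1\}\subseteq S$ and $\varepsilon_2\in\{i,-i\}\subseteq S$ with $\alpha^2\varepsilon_1=p$ and $\beta^2\varepsilon_2=qi$ (take $\varepsilon_1=1$, $\varepsilon_2=i$ if the relevant coefficient vanishes). Then $c=\alpha^2\varepsilon_1+\beta^2\varepsilon_2\in\mycl_A(S;B)=\mathcal M$. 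The point of including the minus signs in $S$ is precisely to compensate for the fact that the leading coefficient of a square of an element of $A$ is a non-negative real number.

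Finally I would conclude as follows: given an arbitrary $b\in B$, if $\myval(b)\geq 1$ then $b\in\mathfrak m\subseteq\mathcal M$; otherwise $b$ is a unit of $B$, so $b=c+m$ with $c\in\mathbb C^\times$ its constant term and $m\in\mathfrak m$, whence $b=c+m\in\mathcal M$ by closedness under addition. Therefore $B=\mathcal M=\mycl_A(S;B)$ with $S$ finite, which is the assertion. I do not anticipate a genuine difficulty here; the only point deserving attention is that Lemma~\ref{lem:basic3} is formulated for quasi-quadratic modules inside $A$, so one must either re-run its proof for $\mathcal M\subseteq B$ as indicated above, or simply observe $\mathfrak m\subseteq\mathcal M$ directly from $\pm 1\in\mathcal M$.
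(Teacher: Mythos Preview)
Your argument is correct, and it actually differs in a pleasant way from the paper's. The paper exhibits eight generators $\{\pm 1,\pm i,\pm X,\pm iX\}$ and proceeds by a case split on the parity of $\myval(f)$: when $\myval(f)=2m$ one writes $f=c_{2m}g^2$ with $g\in A$ (using that strict units admit square roots) and decomposes $c_{2m}=\alpha+i\beta$ with $\alpha,\beta\in\mathbb R^2\cup(-\mathbb R^2)$; when $\myval(f)=2m+1$ one needs the extra factor $X$, whence the generators $\pm X,\pm iX$. Your route avoids the parity split entirely by first absorbing all of $\mathfrak m$ via the identity $x=\bigl(\tfrac{x+1}{2}\bigr)^2\cdot 1+\bigl(\tfrac{x-1}{2}\bigr)^2\cdot(-1)$ (the content of Lemma~\ref{lem:basic3}, whose proof indeed works verbatim for quasi-quadratic $A$-modules in $B$), and then handling constants with the same real/imaginary decomposition. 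This yields only four generators $\{\pm 1,\pm i\}$ and is slightly slicker; the paper's version, on the other hand, gives an explicit square-sum representation of every $f$ without passing through the $\pm 1$ trick.
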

\begin{proof}
	We demonstrate that 
	$\mathbb{C}[\![X]\!]=(\pm 1)A^2+(\pm i)A^2+(\pm X)A^2+(\pm iX)A^2$. 
	It immediately follows that the right-hand side of the equality is included the left-hand side.
	We demonstrate the opposite inclusion. Take a nonzero element $f=\sum_{k=0}^{\infty}c_kX^k\in \mathbb{C}[\![X]\!]$.
	
	When $\myval(f)=2m$ for some $m\geq 0$, there exists a nonzero element $g \in A$ such that $f=c_{2m}g^2$.
	We can take real numbers $\alpha$, $\beta\in\mathbb{R}$ with $c_{2m}=\alpha+i\beta$.
	Hence we have $f=\alpha g^2+i(\beta g^2)$. Since $\alpha, \beta\in \mathbb{R}^2\cup(-\mathbb{R}^2)$,
	the element $f$ is included in the right-hand side of the equality.
	
	When $\myval(f)=2m+1$ for some $m\geq 0$, there exists a nonzero element $h\in A$ such that 
	$f=c_{2m+1}X^{2m+1}h^2$.
	We can take real numbers $\gamma$, $\delta\in\mathbb{R}$ with $c_{2m+1}=\gamma+i\delta$.
	Hence we have $f=\gamma X (X^{m}h)^2+i\bigl(\delta X(X^{m}h)^2\bigl)$. Since $\gamma, \delta\in \mathbb{R}^2\cup(-\mathbb{R}^2)$, the element $f$ is included in the right-hand side of the equality.
\end{proof}

\begin{lemma}\label{conv-fans}
	Let $A$ be as above. With the same notation as in Example \ref{ex:ex2}, the following statements hold true:
	\begin{enumerate}[(1)]
		\item Convex fans $M_{\text{\rm fan,oo}}(\theta_1, \theta_2)$, $M_{\text{\rm fan,oc}}(\theta_1, \theta_2)$ and 
		$M_{\text{\rm fan,co}}(\theta_1, \theta_2)$ for any $\theta_1, \theta_2$ with $0< \theta_2\leq \pi$ are not 
		finitely generated as quasi-quadratic $\mathbb{R}$-modules in $\mathbb{C}$. 
		\item A line $M_{\text{\rm line}}(z)$ for any $z\in\mathbb{C}$ with $z \neq 0$ 
		and a convex fan $M_{\text{\rm fan,cc}}(\theta_1, \theta_2)$ for any $\theta_1, \theta_2$ with $0\leq \theta_2\leq \pi$
		are finitely generated as a
		quasi-quadratic $\mathbb{R}$-modules in $\mathbb{C}$. 
	\end{enumerate}
\end{lemma}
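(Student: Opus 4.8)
The plan is to analyze the two assertions by exploiting the description $\sum \mathbb R^2 = \mathbb R_{\geq 0}$ together with the geometric picture of quasi-quadratic $\mathbb R$-modules in $\mathbb C \cong \mathbb R^2$ given in Example \ref{ex:ex1}: a quasi-quadratic $\mathbb R$-module is exactly a convex ray (convex cone with vertex at the origin). The quasi-quadratic closure of a finite set $\{z_1,\dots,z_n\}$ is $\operatorname{cl}_{\mathbb R}(\{z_1,\dots,z_n\}) = \{\sum_{j=1}^n a_j^2 z_j \mid a_j \in \mathbb R\} = \{\sum_{j=1}^n t_j z_j \mid t_j \in \mathbb R_{\geq 0}\}$, i.e. the convex cone generated by the $z_j$. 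So the real content of the lemma is: which convex cones in $\mathbb R^2$ are finitely generated as convex cones?

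For assertion (1), I would argue that a fan which is open at one of its two bounding rays cannot be finitely generated. Suppose $M_{\text{fan,oo}}(\theta_1,\theta_2)$ (or the oc/co variants) equals the convex cone generated by finitely many nonzero $z_1,\dots,z_n$. Each $z_j$ lies in the fan, hence has an argument lying strictly between $\theta_1$ and $\theta_1+\theta_2$ on the side(s) that are open. Taking the minimum and maximum of these finitely many arguments, the generated cone is the closed fan between those extreme arguments, which is a proper closed subset missing a neighborhood of the open bounding ray — a contradiction. One must handle the degenerate case $\theta_2 = \pi$ (a half-plane with part of its boundary line removed) the same way: finitely many points with arguments in a half-open semicircle generate a cone whose argument set is a closed subinterval, again missing points near the open end. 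So none of the oo, oc, co fans with $0 < \theta_2 \leq \pi$ is finitely generated.

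For assertion (2), I would exhibit explicit finite generating sets. For the line $M_{\text{line}}(z)$, note $M_{\text{line}}(z) = \{tz \mid t \in \mathbb R\} = \mathbb R_{\geq 0} z + \mathbb R_{\geq 0}(-z) = \operatorname{cl}_{\mathbb R}(\{z,-z\})$, so $\{z, -z\}$ generates it. For the closed fan $M_{\text{fan,cc}}(\theta_1,\theta_2)$ with $0 \leq \theta_2 \leq \pi$, I claim $M_{\text{fan,cc}}(\theta_1,\theta_2) = \operatorname{cl}_{\mathbb R}(\{e^{i\theta_1}, e^{i(\theta_1+\theta_2)}\})$ when $0 < \theta_2 \leq \pi$ and $M_{\text{fan,cc}}(\theta_1, 0) = \operatorname{cl}_{\mathbb R}(\{e^{i\theta_1}\})$ when $\theta_2 = 0$: by convexity the right-hand side contains every $re^{i\theta}$ with $\theta_1 \leq \theta \leq \theta_1+\theta_2$ and $r \geq 0$ (the hypothesis $\theta_2 \leq \pi$ guarantees the angle between the two generators is at most $\pi$, so the generated convex cone is exactly this fan and not a larger one or all of $\mathbb C$), and conversely every element of the fan is visibly a nonnegative combination of the two extreme rays. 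Hence these are finitely generated.

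The main obstacle is assertion (1): formalizing that an open bounding ray is never attained by a finite convex combination. The cleanest way is to observe that for any finite set of nonzero complex numbers, the argument set of the generated convex cone (intersected with the unit circle) is a closed arc of length at most $\pi$ — if one stays below half-turn — whose endpoints are among the arguments of the generators, so it is in particular closed; since the open fans are not closed (they omit a bounding ray), equality is impossible. Care is needed to rule out the possibility that the generators span an angle $> \pi$, in which case the generated cone is all of $\mathbb C$, which is likewise not equal to a proper fan. Once the "argument set is a closed arc" observation is set up, both parts fall out quickly.
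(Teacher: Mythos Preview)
Your overall approach matches the paper's: both parts reduce to elementary convex-cone geometry in $\mathbb R^2$, and your argument for part~(1) via ``the argument set of a finitely generated cone is a closed arc'' is essentially the paper's argument (the paper phrases it as $\arg(z+w)\le\max\{\arg z,\arg w\}$ and picks $\theta_0$ strictly beyond the maximal generator argument).

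There is, however, a genuine gap in your part~(2). Your claim that
\[
M_{\text{fan,cc}}(\theta_1,\theta_2)=\operatorname{cl}_{\mathbb R}\bigl(\{e^{i\theta_1},\,e^{i(\theta_1+\theta_2)}\}\bigr)
\]
for all $0<\theta_2\le\pi$ fails at the endpoint $\theta_2=\pi$. In that case the two proposed generators are antipodal, $e^{i(\theta_1+\pi)}=-e^{i\theta_1}$, and the cone they generate is only the line $\mathbb R\,e^{i\theta_1}$, not the closed half-plane $M_{\text{fan,cc}}(\theta_1,\pi)$. Your parenthetical justification (``the angle between the two generators is at most $\pi$, so the generated convex cone is exactly this fan'') is precisely where the reasoning breaks: at angle exactly $\pi$ the cone degenerates. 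The paper handles this by using \emph{three} generators $e^{i\theta_1}$, $e^{i(\theta_1+\theta_2/2)}$, $e^{i(\theta_1+\theta_2)}$, with the midpoint ray ensuring the half-plane case is covered. Adding that third generator (or treating $\theta_2=\pi$ separately) closes the gap.
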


\begin{proof}
	(1) We only demonstrate the case in which $M_{\text{\rm fan,oo}}(\theta_1, \theta_2)$.
	The other cases follow in the same way.
	
	Since $M_{\text{\rm fan,oo}}(\theta_1, \theta_2)$ is a convex ray, the following
	inequality holds true:
	$\arg (z+w)\leq \max \{\arg z, \arg w\}$ for any $z,w\in M_{\text{\rm fan,oo}}(\theta_1, \theta_2)$,
	where the notation $\arg v$ denotes the argument for $v\in\mathbb{C}$.
	Assume for contradiction that $M_{\text{\rm fan,oo}}(\theta_1, \theta_2)$ is a finitely generated 
	quasi-quadratic $\mathbb{R}$-module. There exist elements $z_1,\ldots,z_n \in 
	M_{\text{\rm fan,oo}}(\theta_1, \theta_2)$ such that
	$
	M_{\text{\rm fan,oo}}(\theta_1, \theta_2)=\mathbb{R}^2z_1+\cdots+\mathbb{R}^2z_n
	$. It follows that $\arg z_l=\max\{\arg z_1, \ldots, \arg z_n\}$ for some $1\leq l\leq n$.
	We can take an element $\theta_0\in\mathbb{R}$ with $\arg z_l< \theta_0<\theta_1+\theta_2$.
	Since $e^{i\theta_0}\in M_{\text{\rm fan,oo}}(\theta_1, \theta_2)$, we have
	$
	e^{i\theta_0}=r_1^2z_1+\cdots+r_n^2 z_n 
	$ for some $r_1,\ldots,r_n\in\mathbb{R}$.
	Hence it follows that
	$
	\theta_0=\arg(r_1^2z_1+\cdots+r_n^2 z_n)\leq \max_{1\leq i\leq n}\{\arg z_i\}\leq \arg z_l
	$. Contradiction.

	(2) We obviously have $M_{\text{\rm line}}(z)=\mathbb R^2 z+\mathbb R^2(-z)$.
	This means that $M_{\text{\rm line}}(z)$ is finitely generated.
	We next show that a convex fan $M_{\text{\rm fan,cc}}(\theta_1, \theta_2)$ is finitely generated.
	Set $z_1=e^{i\theta_1}$, $z_2=e^{i(\theta_1+\theta_2/2)}$ and $z_3=e^{i(\theta_1+\theta_2)}$.
	Then we have the following equalities:
		\begin{align*}
			M_{\text{\rm fan,cc}}(\theta_1, \theta_2)
			&=\{sz_1+tz_2\;\vert\;s,t\in\mathbb{R}^2\}
			\cup\{uz_2+vz_3\;\vert\;u,v\in\mathbb{R}^2\}\\
			&=z_1\mathbb{R}^2+z_2\mathbb{R}^2+z_3\mathbb{R}^2.
			\end{align*}
\end{proof}

We are now ready to demonstrate our main theorem in this section.

\begin{theorem}\label{fg_thm}
	Let $\mathcal M$ be a quasi-quadratic $A$-module in $A$. Set $m=\min(\myval(\mathcal M))$.
	The quasi-quadratic module $\mathcal M$ is finitely generated as a quasi-quadratic $A$-module in $A$
	if and only if the quasi-quadratic modules $M_m(\mathcal M)$ and $M_{m+1}(\mathcal M)$ are finitely generated as 
	quasi-quadratic $\mathbb{R}$-modules in $\mathbb{C}$
	if and only if one of the following conditions is satisfied:
	\begin{itemize}
		\item $M_m(\mathcal M)$ is either $\mathbb C$, a line or a  convex fan of the form $M_{\text{\rm fan,cc}}(\theta, \pi)$ for some $\theta \in \mathbb R$.
		\item $M_m(\mathcal M)=M_{\text{\rm fan,cc}}(\theta_1, \theta_2)$ for some $\theta_1 \in \mathbb R$ and $0<\theta_2<\pi$ and $M_{m+1}(\mathcal M)$ is either $\mathbb C$, a line or a convex fan of the form $M_{\text{\rm fan,cc}}(\theta'_1, \theta'_2)$ for some $\theta'_1 \in \mathbb R$ and $0 \leq \theta'_2 \leq \pi$.
	\end{itemize}
\end{theorem}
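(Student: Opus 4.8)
The plan is to combine the structural description of a nonzero quasi-quadratic $A$-module from Example \ref{ex:ex2} with the classification of finitely generated convex rays in $\mathbb C$ in Lemma \ref{conv-fans} and the fact (Lemma \ref{fg_val_ring}) that $\mathbb C[\![X]\!]$ is finitely generated over $A$. We may assume $\mathcal M\neq\{0\}$, so that $m$ is defined, and we write $\mathcal M=\mycl_A(\{s_1,\dots,s_n\})$ with the $s_i$ nonzero when $\mathcal M$ is finitely generated. Two facts will be used repeatedly: $\{x\in A\mid\myval(x)\ge N\}=X^N\mathbb C[\![X]\!]$ for $N\ge 1$, and this is the finitely generated quasi-quadratic $A$-module $\mycl_A(\{\pm X^N,\pm iX^N,\pm X^{N+1},\pm iX^{N+1}\})$, obtained from Lemma \ref{fg_val_ring} by multiplying through by $X^N$; and for a monomial $cX^k$ with $0\neq c\in\mathbb C$ one has $\myval(cX^k)=k$, $\mylcp(cX^k)=c$, and $cX^k\in A$ whenever $k\ge 1$.

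The first step is to show that if $\mathcal M$ is finitely generated then so are $M_m(\mathcal M)$ and $M_{m+1}(\mathcal M)$. For any nonzero $x=\sum_i a_i^2 s_i$, every summand has $\myval(a_i^2 s_i)=2\myval(a_i)+\myval(s_i)\ge\myval(s_i)\ge m$, so the coefficient of $X^m$ in $x$ equals $\sum_{i\in I}\pi_0(a_i)^2\mylcp(s_i)$ with $I=\{i\mid\myval(s_i)=m,\ a_i\in A^\times\}$ and $\pi_0(a_i)\in\mathbb R^\times$. Taking $x$ with $\myval(x)=m$ shows $\mylcp(x)$ lies in the quasi-quadratic $\mathbb R$-module in $\mathbb C$ generated by $\{\mylcp(s_i)\mid\myval(s_i)=m\}$, and the reverse inclusion is immediate from Proposition \ref{prop:M}; thus $M_m(\mathcal M)$ is finitely generated, hence by Lemma \ref{conv-fans} (being nonzero) equals $\mathbb C$, a line, or a closed convex fan $M_{\text{fan,cc}}(\theta_1,\theta_2)$ with $0\le\theta_2\le\pi$. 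If $M_m(\mathcal M)$ is $\mathbb C$, a line, or $M_{\text{fan,cc}}(\theta,\pi)$, we are in case (a) of Example \ref{ex:ex2} and $M_{m+1}(\mathcal M)=\mathbb C$ is trivially finitely generated. Otherwise $M_m(\mathcal M)=M_{\text{fan,cc}}(\theta_1,\theta_2)$ with $0\le\theta_2<\pi$, so its nonzero elements lie in an open half plane and no nonnegative real combination of them vanishes unless all coefficients do; hence whenever $\myval(x)=m+1$ the vanishing of the coefficient of $X^m$ forces $I=\varnothing$, every summand then has valuation $\ge m+1$, and the coefficient of $X^{m+1}$ in $x$ equals $\sum_{i\in J}\pi_0(a_i)^2\mylcp(s_i)$ with $J=\{i\mid\myval(s_i)=m+1,\ a_i\in A^\times\}$. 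As above, this makes $M_{m+1}(\mathcal M)$ the quasi-quadratic $\mathbb R$-module in $\mathbb C$ generated by $\{\mylcp(s_i)\mid\myval(s_i)=m+1\}$, which is finitely generated.

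Conversely, assume $M_m(\mathcal M)$ and $M_{m+1}(\mathcal M)$ are finitely generated and use the explicit form of $\mathcal M$ in Example \ref{ex:ex2}. Pick a finite generating set $z_1,\dots,z_r$ of $M_m(\mathcal M)$ and form the monomials $X^m z_1,\dots,X^m z_r\in\mathcal M$; in case (a) adjoin the generators of $X^{m+1}\mathbb C[\![X]\!]$ above, and in case (b) adjoin also the monomials $X^{m+1}w_1,\dots,X^{m+1}w_s$ for a finite generating set $w_1,\dots,w_s$ of $M_{m+1}(\mathcal M)$ and the generators of $X^{m+2}\mathbb C[\![X]\!]$. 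Let $\mathcal M'$ be the quasi-quadratic closure of this finite set; since each of its members lies in $\mathcal M$, we have $\mathcal M'\subseteq\mathcal M$. For the reverse inclusion take $x\in\mathcal M$: if $\myval(x)=m$ then $\mylcp(x)=\sum_k r_k^2 z_k$ for some $r_k\in\mathbb R$, so $\mylcp(x)X^m=\sum_k r_k^2(X^m z_k)\in\mathcal M'$ has the same valuation and pseudo-angular component as $x$, whence $x\in\mathcal M'$ by Lemma \ref{lem:basic5}; the case $\myval(x)=m+1$ in case (b) is identical; and if $\myval(x)>m$ in case (a) or $\myval(x)>m+1$ in case (b), then $x$ lies in the relevant $X^N\mathbb C[\![X]\!]\subseteq\mathcal M'$. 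Thus $\mathcal M=\mathcal M'$ is finitely generated.

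The equivalence with the displayed conditions then follows by bookkeeping: by Lemma \ref{conv-fans} the finitely generated nonzero convex rays in $\mathbb C$ are precisely $\mathbb C$, the lines, and the closed fans $M_{\text{fan,cc}}(\theta_1,\theta_2)$ with $0\le\theta_2\le\pi$, and splitting $M_m(\mathcal M)$ according to whether it lies in case (a) of Example \ref{ex:ex2} (where $M_{m+1}(\mathcal M)=\mathbb C$ is automatically finitely generated) or in case (b) with $M_m(\mathcal M)$ a proper closed fan yields the two bullet conditions, the fully degenerate sub-cases ($M_m(\mathcal M)$ a single ray, or $M_{m+1}(\mathcal M)=\{0\}$) being covered by exactly the same generating-set construction. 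I expect the main obstacle to be the leading-coefficient bookkeeping in the forward implication --- in particular, recognizing that $M_m(\mathcal M)$ having angular width less than $\pi$ is precisely what prevents the valuation-$m$ parts of $\sum_i a_i^2 s_i$ from cancelling, which is exactly what makes $M_{m+1}(\mathcal M)$ depend only on the valuation-$(m+1)$ generators.
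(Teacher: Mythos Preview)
Your argument is correct and follows essentially the same route as the paper: both directions split along the (a)/(b) dichotomy of Example~\ref{ex:ex2}, the forward direction extracts the leading coefficients to exhibit finite generators of $M_m(\mathcal M)$ and $M_{m+1}(\mathcal M)$ (using, in case~(b), that $M_m(\mathcal M)$ contains no pair $\pm u$ so the valuation-$m$ parts cannot cancel), and the converse builds a finite generating set from monomials $z_jX^m$, $w_jX^{m+1}$ together with generators of the tail ideal $X^N\mathbb C[\![X]\!]$ furnished by Lemma~\ref{fg_val_ring}. The only cosmetic difference is that in the converse the paper writes each $f\in\mathcal N_i$ explicitly as $\mylcp(f)X^{m+i}g$ with $g$ a strict unit (hence a square), whereas you invoke Lemma~\ref{lem:basic5} to the same effect; both rest on the square-root hypothesis and yield the same conclusion.
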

\begin{proof}
	The second `if and only if' part follows from Example \ref{ex:ex2} and Lemma \ref{conv-fans} once the first `if and only if' part is proved.
	We prove the first `if and only if' part in the rest of the proof.
	
	We first assume that $\mathcal M$ is a finitely generated quasi-quadratic $A$-module in $A$.
	There is a finite number of nonzero elements $g_1,\ldots,g_n\in \mathcal M$ with
	$\mathcal M=A^2g_1+\cdots+A^2g_n$. Set $S_k:=\{i\;\vert\; 1\leq i\leq n, \myval(g_i)=k\}$ for $k \geq m$.
	We first prove that $M_m(\mathcal M)$ is finitely generated.
	Note that $S_m$ is not empty because $\min(\myval(\mathcal M))=m$.
	It is immediate that $\mylcp(g_i)\in M_m(\mathcal M)$ for any $i\in S_m$.
	By Proposition \ref{prop:M}, we have $\sum_{i\in S_m}\mathbb{R}^2\mylcp(g_i)\subset M_m(\mathcal M)$.
	We next prove the opposite inclusion. Take a nonzero element $c\in M_m(\mathcal M)$.
	We can take a nonzero element $f\in \mathcal M$ with $c=\mylcp(f)$ and $\myval(f)=m$.
	For each $1\leq i\leq m$, there are elements $a_i(X)\in A$ such that 
	$f=a_1(X)^2g_1+\cdots+a_n(X)^2 g_n$ by the assumption.
	Then we have 
	\begin{align*}
	c&=\mylcp(f)
	=\sum_{i\in S_m}a_i(0)^2\mylcp(g_i)\in \sum_{i\in S_m}\mathbb{R}^2\mylcp(g_i).
	\end{align*}
	It implies that $M_m(\mathcal M)$ is finitely generated.
	
	We next show that $M_{m+1}(\mathcal M)$ is finitely generated.
	When $\mathcal M$ is one of the $A$-quasi-quadratic modules given in (a) of Example \ref{ex:ex2}, we have $M_{m+1}(\mathcal M)=\mathbb C$.
	It is finitely generated because $\mathbb C = \mathbb R^2 + (-\mathbb R^2) + (i \mathbb R^2)+(-i\mathbb R^2)$.
	
	We concentrate on the case in which $\mathcal M$ is one of the quasi-quadratic $A$-modules given in (b) of Example \ref{ex:ex2}.
	Let $f$ be an arbitrary element of $\mathcal M$ with $\myval(f)=m+1$.
	There are $a_1(X), \ldots, a_n(X)\in A$ such that 
	$f=a_1(X)^2g_1+\cdots+a_n(X)^2 g_n$ by the assumption.
	Since $\myval(f)=m+1$, we have $\sum_{i \in S_m}a_i(0)^2\mylcp(g_i)=0$.
	If $a_k(0) \neq 0$ for some $k \in S_m$, we have 
	$0 \neq a_k(0)^2\mylcp(g_k) \in M_m(\mathcal M)$ and 
	\begin{align*}
	&-a_k(0)^2\mylcp(g_k)
	=\sum_{i \in S_m, i \neq k}a_i(0)^2\mylcp(g_i) \in M_m(\mathcal M).
	\end{align*}
	It is a contradiction because there are no elements $u \neq 0$ in $M_m(\mathcal M)$ with $- u \in M_m(\mathcal M)$ in this case.
	Therefore, we have $a_i(0)=0$ for every $i \in S_m$.
	It implies that $\myval(a_i(X)^2g_i)>m$ for every $1 \leq i \leq n$. 
	Using these inequalities, we finally get the equality $$\mylcp(f)=\sum_{i \in S_{m+1}}a_i(0)^2\mylcp(g_i).$$
	It means that $M_{m+1}(\mathcal M)$ is generated by $\{\mylcp(g_i)\;\vert\; i \in S_{m+1}\}$ by the definition of $M_{m+1}(\mathcal M)$.

	We demonstrate the opposite implication. Suppose that $M_m(\mathcal M)$ and $M_{m+1}(\mathcal M)$ are finitely generated
	quasi-quadratic $\mathbb{R}$-modules in $\mathbb{C}$.
	Let $a_{i1}, \ldots a_{il_i}$ be generators of $M_{m+i}(\mathcal M)$ for $i=0,1$.
	Set $$\mathcal N_i=\{f \in A\;\vert\; \myval(f)=m+i \ \mbox{and}\ \mylcp(f)\in M_{m+i}(\mathcal M))\}$$ for $i=0,1$.
	We have $\mathcal N_i \subseteq \sum_{j=1}^{l_i}a_{ij}X^{m+i}A^2$.
	In fact, take a nonzero element $f \in \mathcal N_i$.
	We have $f=\mylcp(f)X^{m+i}g$ for some $g \in A$ with $\pi(g)=1$.
	Since any strict units in $A$ admit a square root, we have 
	\begin{align*}
	f \in \mylcp(f)X^{m+i}A^2 &\subseteq \sum_{j=1}^{l_i}  a_{ij}\mathbb R^2X^{m+i}A^2 =\sum_{j=1}^{l_i}a_{ij}X^{m+i}A^2.
	\end{align*}
	
	By Example \ref{ex:ex2}, we have 
	\begin{align*}
		&\mathcal M=\{x \in A\;\vert\; (\myval(x)=m \text{ and } \mylcp(x) \in M_m(\mathcal M))  \text{ or } \myval(x)>m\}
	\end{align*}
	or 
	\begin{align*}
		&\mathcal M=\{x \in A\;\vert\; 
		 (\myval(x)=m \text{ and } \mylcp(x) \in M_m(\mathcal M)) \\
		&  \text{ or } (\myval(x)=m+1 \text{ and } \mylcp(x) \in M_{m+1}(\mathcal M)) \\
		&  \text{ or } \myval(x)>m+1\}.
	\end{align*}
	Even in the former case, we may assume that $\mathcal M$ is of the latter form because $M_{m+1}(\mathcal M)=\mathbb C$ in the former case.
	Therefore, we only treat the case in which $\mathcal M$ is of the latter form.
	It is obvious that $\mathcal N_i \subseteq \mathcal M$ for $i=0,1$.
	These inclusions imply $\sum_{j=1}^{l_i}a_{ij}X^{m+i}A^2 \subseteq \mathcal M$ for $i=0,1$ because $a_{ij}X^{m+i} \in \mathcal N_i$ and $\mathcal M$ is a quasi-quadratic $A$-module.
	We finally have 
	\begin{align*}
	\mathcal M &= \mathcal N_0 \cup \mathcal N_1 \cup X^{m+2}\mathbb C[\![X]\!]\\ &\subseteq \sum_{i=0}^1 \sum_{j=1}^{l_i}a_{ij}X^{m+i}A^2 + X^{m+2}\mathbb C[\![X]\!]  \\
	&\subseteq \mathcal M.
\end{align*}
	It implies that $$\mathcal M = \sum_{i=0}^1 \sum_{j=1}^{l_i}a_{ij}X^{m+i}A^2 + X^{m+2}\mathbb C[\![X]\!].$$
	This equality implies that $\mathcal M$ is finitely generated by Lemma \ref{fg_val_ring}.
\end{proof}

%\begin{appendices}
\section{Appendix: When $F_0$ is of characteristic two}\label{sec:char2}
We consider the case in which the residue field $F_0$ is of characteristic two.
From now on, let $A$ be a pseudo-valuation ring whose residue field is of characteristic two such that any strict units admit a square root.
We use the following lemma:
\begin{lemma}[Basic lemma]\label{lem:char_two1}
	Any element in $\mathfrak m$ is the sum of squares of two elements in $A$.
\end{lemma}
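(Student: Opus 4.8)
The plan is to exploit two consequences of the standing hypotheses: that $1+x$ is a strict unit for every $x\in\mathfrak m$, and — crucially using that $F_0$ has characteristic two — that $-1$ is itself a strict unit of $A$.

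First I would observe that for $x\in\mathfrak m$ one has $\pi_0(1+x)=\pi_0(1)+\pi_0(x)=1+0=1$, so $1+x$ is a strict unit; by the assumption that strict units in $A$ admit a square root, there is $u\in A$ with $1+x=u^2$. Next, since $F_0$ is of characteristic two we have $\pi_0(-1)=-1=1$ in $F_0$, so $-1$ is also a strict unit, whence $-1=w^2$ for some $w\in A$. Combining the two, $x=(1+x)+(-1)=u^2+w^2$, which is exactly the required representation of $x$ as a sum of squares of two elements of $A$ (the case $x=0$ being covered automatically).

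I do not expect a genuine obstacle here: the whole content is the double invocation of the square-root hypothesis, once for the strict unit $1+x$ and once for the strict unit $-1$, the latter being available precisely because of the characteristic-two assumption. One might instead try to symmetrize the construction, e.g. by combining square roots of $1+x$ and $1-x$, but that would force a division by $4$, which is illegitimate since $2\in\mathfrak m$; so passing through a square root of $-1$ is the natural device.
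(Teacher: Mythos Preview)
Your proof is correct and essentially identical to the paper's: both use that $1+x$ and $-1$ are strict units (the latter because $\operatorname{char} F_0=2$), take square roots $u,w\in A$, and write $x=u^2+w^2$. Your remark about why the $(1+x)/(1-x)$ symmetrization fails is a nice addition but not needed for the argument.
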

\begin{proof}
	Let $x \in \mathfrak m$.
	Since $\pi_0(1+x)=1$ and $\pi_0(-1)=-1=1$, both $1+x$ and $-1$ are strict units in $A$.
	There exist $u,v \in A$ with $u^2=1+x$ and $v^2=-1$ because strict units admit a square root in $A$.
	We have $x=u^2+v^2$.
\end{proof}

We give a corollary of the basic lemma.

\begin{corollary}\label{cor:char_two1}
	Let $\mathcal M$ be a quasi-quadratic $A$-module in $A$.
	Let $x \in \mathcal M$ and $y \in A$.
	If $\myval(x)<\myval(y)$, then $y \in \mathcal M$.
\end{corollary}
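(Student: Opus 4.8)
The plan is to imitate the argument of Lemma~\ref{lem:basic3}, which already handles the case where both $\pm u \in \mathcal M$, but now using the Basic Lemma~\ref{lem:char_two1} to bypass the need for $-x$ to lie in $\mathcal M$. The point is that in characteristic two the identity $x = uy$ with $y \in \mathfrak m$ can be rewritten purely in terms of \emph{sums of squares}, since $-1$ is itself a square (indeed a strict unit's square root exists), so every element of $\mathcal M$ automatically behaves as if its negative were present. Concretely, I would first reduce to the case $x \neq 0$ (the case $x=0$ being trivial since $0 \in \mathcal M$), and also note $\myval(x) < \myval(y)$ forces nothing special about $y$ beyond $y \in A$; if $y = 0$ then $y \in \mathcal M$ automatically, so assume $y \neq 0$ as well.

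Next I would set $y = x \cdot z$ where $z = y/x \in K$. Since $\myval(z) = \myval(y) - \myval(x) > e$, we have $z \in \mathfrak m \subseteq A$. By the Basic Lemma applied to $z \in \mathfrak m$, there exist $u, v \in A$ with $z = u^2 + v^2$. Then
\[
y = xz = x(u^2 + v^2) = u^2 x + v^2 x \in \mathcal M,
\]
using that $\mathcal M$ is closed under multiplication by squares of elements of $A$ and under addition, together with $x \in \mathcal M$. This completes the argument.

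The only subtlety, and the step I would be most careful about, is checking that $z \in \mathfrak m$ genuinely holds: this uses that $\myval(x) < \myval(y)$ is a strict inequality in the value group $G$, so that $\myval(z) = \myval(y)\myval(x)^{-1} > e$, hence $z$ lies in the maximal ideal $\mathfrak m$ of $B$; and then $z \in A$ because $\mathfrak m \subseteq A$ (the maximal ideal of $B$ coincides with $\mathfrak m$). Everything else is a one-line application of the Basic Lemma and the defining closure properties of a quasi-quadratic module, so I do not expect any real obstacle — the characteristic-two hypothesis is precisely what makes the $\pm u$ requirement of Lemma~\ref{lem:basic3} unnecessary here, because the ``$-x$'' term is replaced by a genuine sum of two squares times $x$.
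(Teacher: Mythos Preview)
Your proof is correct and essentially identical to the paper's own argument: set $z = y/x \in \mathfrak m$, invoke the Basic Lemma to write $z = u^2 + v^2$ with $u,v \in A$, and conclude $y = u^2 x + v^2 x \in \mathcal M$. The only cosmetic slip is the mixed notation $\myval(y) - \myval(x)$ versus $\myval(y)\myval(x)^{-1}$ (the paper uses multiplicative notation for $G$), and the case $x = 0$ is actually vacuous rather than trivial since $\myval(0) = \infty$ admits no strictly larger value.
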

\begin{proof}
	By the assumption, we have $\myval(y/x)>e$ and $y/x \in \mathfrak m$.
	There exist $u,v \in A$ with $y/x=u^2+v^2$.
	It implies that $y=(u^2+v^2)x \in \mathcal M$.
\end{proof}

For presenting structure theorems of quasi-quadratic modules, we introduce several notations.

\begin{definition}\label{def:cut}
	A subset $S$ of $G_{\geq e}$ is called a \textit{final segment} if $y \in S$ whenever $y>x$ for some $x \in S$.
	Let $\mathfrak S$ denotes the set of final segments and $\mathfrak S_{\min}$ denote the set of final segments having smallest elements.
	For any final segment $S$, we set $$\Delta_1(S)=\{0\} \cup\{0 \neq x \in A\;\vert\; \myval(x) \in S \}.$$ 
	When $S$ has a smallest element, for any nonzero quasi-quadratic $F_0$-module $M$ in $F$, we set \begin{align*}
		\Delta_2(S,M)&=\{0\}\cup\{0 \neq x \in A\;\vert\; \myval(x) >\min S \text{ or }\\
		&\qquad (\myval(x)=\min S \text{ and } \mylcp(x) \in M)\}.
	\end{align*} 
	Here, $\min S$ denotes the unique smallest element of $S$.
	Consider the case in which $S$ has a smallest element.
	Note that $\Delta_1(S)=\Delta_2(S,F_0)$ when $\min S= e$ and $\Delta_1(S)=\Delta_2(S,F)$ otherwise.
\end{definition}

The following lemma asserts that the sets $\Delta_1(S)$ and $\Delta_2(S,M)$ are quasi-quadratic $A$-modules in $A$.

\begin{lemma}\label{lem:char_two_qq}
	The following assertions hold true:
	\begin{enumerate}
		\item[(1)] The set $\Delta_1(S)$ is a quasi-quadratic module in $A$ for any $S\in\mathfrak S$.
		%\setminus \mathfrak S_{\min}$.
		\item[(2)] The set $\Delta_2(S,M)$ is a quasi-quadratic module in $A$ for any $S\in\mathfrak
		S_{\min}$ and for any nonzero quasi-quadratic $F_0$-module $M$ in $F$.
	\end{enumerate}
\end{lemma}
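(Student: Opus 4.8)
The plan is to verify the two defining closure properties of a quasi-quadratic $A$-module --- closure under addition and closure under multiplication by squares of elements of $A$ --- for each of the two sets separately, relying heavily on Corollary \ref{cor:char_two1} to dispatch the ``unequal valuation'' cases and on Lemma \ref{lem:char_two1} to handle sums of elements of equal valuation.

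\textbf{Closure under squares.} For $\Delta_1(S)$: if $x\in\Delta_1(S)$ is nonzero and $a\in A$ is nonzero, then $\myval(a^2x)=\myval(a)^2\myval(x)\geq\myval(x)\in S$, and since $S$ is a final segment $\myval(a^2x)\in S$, so $a^2x\in\Delta_1(S)$; if $a=0$ or $x=0$ the product is $0\in\Delta_1(S)$. For $\Delta_2(S,M)$ with $\min S=:m$: given nonzero $x\in\Delta_2(S,M)$ and nonzero $a\in A$, if $\myval(x)>m$ then $\myval(a^2x)>m$ too; if $\myval(x)=m$ and $\mylcp(x)\in M$, then either $\myval(a)>e$, whence $\myval(a^2x)>m$, or $\myval(a)=e$, whence $\myval(a^2x)=m$ and $\mylcp(a^2x)=\pi(a)^2\mylcp(x)=\pi_0(a)^2\mylcp(x)\in M$ by Definition \ref{def:pseudo}(2) and the fact that $M$ is a quasi-quadratic $F_0$-module. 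In all cases $a^2x\in\Delta_2(S,M)$.

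\textbf{Closure under addition.} Let $x,y$ be nonzero elements of the set in question with $x+y\neq0$ (the degenerate cases being trivial). If $\myval(x)\neq\myval(y)$, say $\myval(x)<\myval(y)$, then $\myval(x+y)=\myval(x)$ by Definition \ref{def:pseudo}(4), so the membership of $x+y$ follows directly from that of $x$ (for $\Delta_2(S,M)$ note $\mylcp(x+y)=\mylcp(x)$, so the residue condition is inherited). Now suppose $\myval(x)=\myval(y)=:g$. Since $\myval(x+y)\geq g$ always, for $\Delta_1(S)$ we are done because $S$ is a final segment and $g\in S$ forces $\myval(x+y)\in S$. For $\Delta_2(S,M)$: if $g>m$ then $\myval(x+y)>m$ and we are done; if $g=m$, then $\mylcp(x),\mylcp(y)\in M$, and if $\mylcp(x)+\mylcp(y)\neq0$ then $\myval(x+y)=m$ and $\mylcp(x+y)=\mylcp(x)+\mylcp(y)\in M$ by Definition \ref{def:pseudo}(4), while if $\mylcp(x)+\mylcp(y)=0$ then Lemma \ref{lem:lcp_basic2} gives $\myval(x+y)>m$; either way $x+y\in\Delta_2(S,M)$.

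The only mild subtlety --- and the step I expect to need the most care --- is the equal-valuation, equal-but-opposite-leading-term case for $\Delta_2(S,M)$, where one must invoke Lemma \ref{lem:lcp_basic2} to see that cancellation strictly raises the valuation; everything else is bookkeeping with final segments. Note that characteristic two is not actually used in this lemma itself (it will be used later, via Corollary \ref{cor:char_two1}, in the structure theorems), though one could alternatively phrase the unequal-valuation case through Corollary \ref{cor:char_two1}.
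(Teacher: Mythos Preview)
Your proof is correct and follows essentially the same route as the paper's: verify closure under squares and addition directly, using Definition~\ref{def:pseudo}(2),(4) for the routine cases and Lemma~\ref{lem:lcp_basic2} for the cancellation case at $\min S$. One small remark: your opening ``plan'' sentence promises to rely on Corollary~\ref{cor:char_two1} and Lemma~\ref{lem:char_two1}, but (as you yourself note in the last paragraph) the actual argument never uses them --- so you might drop that sentence to avoid misleading the reader.
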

\begin{proof}
	(1) It is a routine to demonstrate that $\Delta_1(S)$ is closed
	under multiplication by the squares of elements in $A$.
	We have only to demonstrate that $\Delta_1(S)$ is closed under addition.
	Take a nonzero elements $x,y\in\Delta_1(S)$ with $x+y\neq 0$.
	It immediately follows that $\myval(x+y)\in\Delta_1(S)$ since 
	$\myval(x+y)\geq\min(\myval(x),\myval(y))\in S$ and $S$ is a final segment.

	(2) We first demonstrate the closedness under addition. Take nonzero elements $x,y\in\Delta_2(S,M)$ with $x+y\neq 0$.
	We may assume that $\myval(x)\geq \myval(y)$. We first consider the case in which $\myval(x)=\myval(y)$.
	When $\mylcp(x)+\mylcp(y)\neq 0$, it follows from Definition \ref{def:pseudo}(4)
	that $\myval(x+y)=\myval(x)$ and $\mylcp(x+y)=\mylcp(x)+\mylcp(y)$.
	This implies that $x+y\in\Delta_2(S,M)$.
	When $\mylcp(x)+\mylcp(y)= 0$, we have
	that $\myval(x+y)>\myval(x)\geq \min S$ by using Lemma \ref{lem:lcp_basic2}.
	This means that $x+y\in\Delta_2(S,M)$.
	We next consider the remaining case in which $\myval(x)>\myval(y)$.
	We get $\myval(x+y)=\myval(y)$ and $\mylcp(x+y)=\mylcp(y)$ from Definition \ref{def:pseudo}(4).
	Hence it follows that $x+y\in\Delta_2(S,M)$.
	
	We demonstrate that $\Delta_2(S,M)$ is closed under multiplication by the squares of elements in $A$.
	Take nonzero elements $x\in\Delta_2(S,M)$ and $a\in A$. When $\myval(x)>\min S$, 
	it is obvious that $\myval(a^2x)\geq\myval(x)>\min S$. Thus we have $a^2x\in\Delta_2(S,M)$.
	We next consider the case in which $\myval(x)=\min S$ and $\mylcp(x)\in M$.
	When $\myval(a)>e$, it follows that $\myval(a^2x)>\myval(x)=\min S$. This implies that $a^2x\in\Delta_2(S,M)$.
	When $\myval(a)=e$, we have 
	$\myval(a^2x)=\myval(x)=\min S$ and $\mylcp(a^2x)=\pi_0(a)^2\mylcp(x)\in M$ 
	from Definition \ref{def:pseudo}(2).
\end{proof}

\begin{theorem}\label{thm:str_chartwo}
	Let $\mathcal M$ be a quasi-quadratic $A$-module in $A$.
	\begin{enumerate}
		\item[(1)] $S=\myval(\mathcal M)$ is a final segment.
		\item[(2)] If $S$ does not have a smallest element, we have $\mathcal M=\Delta_1(S)$.
		Otherwise, $\mathcal M=\Delta_2(S,M_{\min S}(\mathcal M))$.
	\end{enumerate}
\end{theorem}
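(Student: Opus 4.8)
\emph{Plan.} The plan is to deduce both parts directly from Corollary~\ref{cor:char_two1}, Lemma~\ref{lem:basic5} and Proposition~\ref{prop:M}, after recording one elementary observation: every $g \in G_{>e}$ is the valuation of some element of $A$. Indeed $\myval\colon K^\times \to G$ is surjective, and if $w \in B$ satisfies $\myval(w)=g>e$ then $\pi(w)=0\in F_0$, so $w$ lies in $\mathfrak m\subseteq A$ (cf.\ Lemma~\ref{lem:basic}). Since $\mathcal M\subseteq A\subseteq B$, every nonzero element of $\mathcal M$ has valuation in $G_{\geq e}$; write $S=\{\myval(x)\mid 0\neq x\in\mathcal M\}\subseteq G_{\geq e}$.

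\emph{Part (1).} Let $x\in S$ and let $y\in G$ with $y>x$. Then $y>e$, so by the observation there is $w\in A$ with $\myval(w)=y$. Choosing $0\neq t\in\mathcal M$ with $\myval(t)=x<\myval(w)$, Corollary~\ref{cor:char_two1} yields $w\in\mathcal M$, hence $y\in S$. Thus $S$ is a final segment. (If $\mathcal M=\{0\}$ then $S=\emptyset$, a final segment with no smallest element, and the formulas below specialize correctly.)

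\emph{Part (2).} In both cases the inclusion ``$\subseteq$'' is immediate from the definitions: a nonzero $x\in\mathcal M$ has $\myval(x)\in S$, and when $S$ has smallest element $m=\min S$ and $\myval(x)=m$ then $\mylcp(x)\in M_m(\mathcal M)$ by the definition of $M_g(\mathcal M)$; since $0$ lies in every one of these sets this gives $\mathcal M\subseteq\Delta_1(S)$, resp.\ $\mathcal M\subseteq\Delta_2(S,M_m(\mathcal M))$. For ``$\supseteq$'', take a nonzero $x$ in the right-hand side. If $S$ has no smallest element, then $\myval(x)\in S$ strictly exceeds $\myval(t)$ for some $0\neq t\in\mathcal M$, so $x\in\mathcal M$ by Corollary~\ref{cor:char_two1}. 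If $S$ has a smallest element $m$: when $\myval(x)>m$, apply Corollary~\ref{cor:char_two1} with a $t\in\mathcal M$ of valuation $m$; when $\myval(x)=m$ and $\mylcp(x)\in M_m(\mathcal M)$, pick $0\neq t\in\mathcal M$ with $\myval(t)=m=\myval(x)$ and $\mylcp(t)=\mylcp(x)$, so that $x\in\mathcal M$ by Lemma~\ref{lem:basic5}. One also checks that $\Delta_2(S,M_m(\mathcal M))$ is legitimately defined: $M_m(\mathcal M)$ is a quasi-quadratic $F_0$-module in $F$ by Proposition~\ref{prop:M}, and it is nonzero since $m\in S$.

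\emph{Expected obstacle.} There is no genuinely hard step. The only points needing a little care are that valuations exceeding $e$ are realized \emph{inside} $A$ (handled by Lemma~\ref{lem:basic}), and the replacement of the $(y\pm1)/2$ manipulation of the main text by its characteristic-two substitute Corollary~\ref{cor:char_two1}, which plays here the role of Lemma~\ref{lem:basic3}. Everything else is just unwinding Definition~\ref{def:cut}.
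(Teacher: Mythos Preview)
Your proof is correct and follows essentially the same route as the paper. You in fact supply a couple of details the paper leaves implicit: the observation that every $g>e$ is realized as $\myval(w)$ for some $w\in A$ (needed to see that $S$ is upward closed \emph{inside} $G_{\geq e}$), and the check that $M_{\min S}(\mathcal M)$ is nonzero so that $\Delta_2$ is defined; the paper also invokes Lemma~\ref{lem:basic4} where your use of Lemma~\ref{lem:basic5} is slightly more direct.
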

\begin{proof}
	The assertion (1) immediately follows from Corollary \ref{cor:char_two1}.
	
	We demonstrate the assertion (2) when $S$ does not have a smallest element.
	The inclusion $\mathcal M \subseteq \Delta_1(S)$ is obvious.
	We demonstrate the opposite inclusion.
	Take a nonzero element $x \in \Delta_1(S)$.
	We have $\myval(x) \in S$.
	Since $S$ does not have a smallest element, we can take $y \in \mathcal M$ with $\myval(y)<\myval(x)$.
	We get $x \in \mathcal M$ by Corollary \ref{cor:char_two1}.
	
	We finally consider the remaining case.
	Set $M=M_{\min S}(\mathcal M)$ for simplicity.
	The inclusion $\mathcal M \subseteq \Delta_2(S,M)$ is easy.
	We omit the proof.
	We prove the opposite inclusion.
	Take a nonzero $x \in \Delta_2(S,M)$.
	When $\myval(x)>\min S$, we can prove $x \in \mathcal M$ similarly in the previous case.
	When $\myval(x)=\min S$, we have $\mylcp(x) \in M$.
	By the definition of $M$, there exists $y \in \mathcal M$ such that $\myval(y)=\myval(x)$ and $\mylcp(y)=\mylcp(x)$.
	We have $x \in \mathcal M$ by Lemma \ref{lem:basic4}. 
\end{proof}

\begin{proposition}\label{prop:char_two2}
	The following assertions hold true:
	\begin{enumerate}
		\item[(1)] Let $S_1$ and $S_2$ be final segments in $\myval(A)$.
		Then, at least one of the inclusions $S_1 \subseteq S_2$ and $S_2 \subseteq S_1$ holds true.
		\item[(2)] Let $S_1$ and $S_2$ be final segments in $\myval(A)$.
		Let $M_1$ and $M_2$ be nonzero quasi-quadratic $F_0$-modules in $F$.
				\begin{enumerate}
						\item[(a)] We have $\Delta_1(S_1) \cap \Delta_1(S_2)= \Delta_1(S_1)$ and 
						$\Delta_1(S_1)+\Delta_1(S_2)=\Delta_1(S_2)$ when $S_1 \subseteq S_2$;
						\item[(b)] When $S_1, S_2 \in \mathfrak S_{\min}$, we have
						\begin{align*}
								&\Delta_2(S_1,M_1) \cap \Delta_2(S_2,M_2) \\
								&= \left\{
								\begin{array}{ll}
										\Delta_2(S_1,M_1) & \text{ if } S_1 \subsetneq S_2,\\
										\Delta_2(S_1, M_1 \cap M_2) & \text{ if } S_1=S_2 \text{ and } \\
										& \quad M_1 \cap M_2 \neq \{0\},\\
										\Delta_1(S') & \text{ if } S_1=S_2 \text{ and }\\
										& \quad M_1 \cap M_2 = \{0\},
									\end{array}
								\right.
							\end{align*}
						where $g_{\min}$ is a smallest element of $S_1$ and $S'=\{g \in S_1 \mid g > g_{\min}\}$. 
						We also have 
						\begin{align*}
								&\Delta_2(S_1,M_1) + \Delta_2(S_2,M_2) \\
								&\quad = \left\{
								\begin{array}{ll}
										\Delta_2(S_2,M_2) & \text{ if } S_1 \subsetneq S_2,\\
										\Delta_2(S_1, M_1 + M_2) & \text{ if } S_1=S_2;
									\end{array}
								\right.
							\end{align*}
						\item[(c)] When $S_1 \in \mathfrak S_{\min}$, we have
						\begin{align*}
								&\Delta_2(S_1,M_1) \cap \Delta_1(S_2) \\
								&\quad = \left\{
								\begin{array}{ll}
										\Delta_2(S_1,M_1) & \text{ if } S_1 \subseteq S_2,\\
										\Delta_1(S_2) & \text{ if } S_2 \subsetneq S_1,\\
									\end{array}
								\right.
							\end{align*}
						and $\Delta_2(S_1,M_1) + \Delta_1(S_2)$
						\begin{align*}
								%&\Delta_2(S_1,M_1) + \Delta_1(S_2) \\
								& \quad = \left\{
								\begin{array}{ll}
										\Delta_1(S_2) & \text{ if } S_1 \subseteq S_2,\\
										\Delta_2(S_1,M_1) & \text{ if } S_2 \subsetneq S_1.\\
									\end{array}
								\right.
							\end{align*}
					\end{enumerate}
	\end{enumerate}
\end{proposition}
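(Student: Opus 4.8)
The plan is to reduce the whole statement to two soft facts — that final segments of a linearly ordered set are totally ordered by inclusion, and that $\Delta_1$ and $\Delta_2$ are monotone in the appropriate sense — so that every ``one set contains the other'' clause becomes immediate and the only clause carrying real content is the equal-support case of (b).

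First I would prove (1): if $S_1\not\subseteq S_2$, pick $g\in S_1\setminus S_2$; for every $h\in S_2$ linearity forces $g\leq h$ (else $g>h$ with $h\in S_2$ would put $g\in S_2$), and then $g\in S_1$ and the final-segment property of $S_1$ give $h\in S_1$, so $S_2\subseteq S_1$. I would also record the elementary remark that a final segment with a smallest element $g$ equals $\{x\in\myval(A):x\geq g\}$; hence, for $S_1,S_2\in\mathfrak S_{\min}$, one has $S_1\subsetneq S_2$ iff $\min S_2<\min S_1$, and $S_1=S_2$ iff $\min S_1=\min S_2$.

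Next I would establish, by unwinding Definition \ref{def:cut} and using $\myval(x+y)\geq\min(\myval(x),\myval(y))$: (i) $S\subseteq S'$ implies $\Delta_1(S)\subseteq\Delta_1(S')$; (ii) $\Delta_2(S,M)\subseteq\Delta_1(S)$ always; (iii) $\Delta_1(S')\subseteq\Delta_2(S,M)$ whenever every element of the final segment $S'$ exceeds $\min S$; (iv) $\Delta_2(S_1,M_1)\subseteq\Delta_2(S_2,M_2)$ for all $M_1,M_2$ whenever $S_1\subsetneq S_2$ in $\mathfrak S_{\min}$, since then $\min S_2<\min S_1$ and a nonzero element of $\Delta_2(S_1,M_1)$ has valuation $\geq\min S_1>\min S_2$. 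Because $M\cap N=M$ and $M+N=N$ for quasi-quadratic modules $M\subseteq N$, facts (i)--(iv) — together with the observation that $S_2\subsetneq S_1$ with $S_1\in\mathfrak S_{\min}$ forces every element of $S_2$ to exceed $\min S_1$, and that $\Delta_2(S_1,M_1)\subseteq\Delta_1(S_1)\subseteq\Delta_1(S_2)$ when $S_1\subseteq S_2$ — immediately yield every clause of (2) in which one of the two sets is contained in the other: the $S_1\subsetneq S_2$ lines of (a), (b), (c) and both sub-cases of (c).

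It remains to treat $S_1=S_2\in\mathfrak S_{\min}$ in (b); write $g_{\min}=\min S_1$. I would first dispose of the degenerate case $g_{\min}=e$: every valuation-$e$ element of $A$ is a unit, whose pseudo-angular component agrees with its image under $\pi_0$ and so lies in $F_0$, whence $\Delta_2(S_1,M_i)=\Delta_2(S_1,M_i\cap F_0)$; thus one may assume $M_i\subseteq F_0$ here, reading $\Delta_2(S_1,\{0\})$ as $\Delta_1(S_1\setminus\{e\})$, a case already covered. For the intersection, split a nonzero $x\in\Delta_2(S_1,M_1)\cap\Delta_2(S_1,M_2)$ by whether $\myval(x)>g_{\min}$ (always admissible, contributing $\Delta_1(S')$ with $S'=\{g\in S_1:g>g_{\min}\}$) or $\myval(x)=g_{\min}$ (then $\mylcp(x)\in M_1\cap M_2$); since $\mylcp$ never vanishes this produces $\Delta_2(S_1,M_1\cap M_2)$ when $M_1\cap M_2\neq\{0\}$ and $\Delta_1(S')$ otherwise. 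For the sum, $\Delta_2(S_1,M_1)+\Delta_2(S_1,M_2)\subseteq\Delta_2(S_1,M_1+M_2)$ follows from a short case analysis of $\myval(x_1+x_2)$ and $\mylcp(x_1+x_2)$ using Definition \ref{def:pseudo}(4) and Lemma \ref{lem:lcp_basic2} (equal valuations with cancelling components forcing $\myval(x_1+x_2)>g_{\min}$). The reverse inclusion is the step I expect to be the main obstacle: given nonzero $x$ with $\myval(x)=g_{\min}$ and $\mylcp(x)=c_1+c_2$, $c_i\in M_i$, I would, when both $c_i\neq 0$, produce $w_i\in K$ with $\myval(w_i)=g_{\min}$ and $\mylcp(w_i)=c_i$ via Definition \ref{def:pseudo}(3) (so $w_i\in\mathfrak m\subseteq A$ when $g_{\min}>e$, and $w_i\in A^\times$ when $g_{\min}=e$ by Lemma \ref{lem:basic}), note from Definition \ref{def:pseudo}(4) that $w_1+w_2$ has the same valuation and pseudo-angular component as $x$, and then apply Lemma \ref{lem:basic4} to write $x=(w_1+w_2)u^2=w_1u^2+w_2u^2$ with $u\in A^\times$, exhibiting $x$ in the Minkowski sum; if some $c_i$ vanishes then $x$ already lies in the other summand, and the $\supseteq$ inclusions in the $\Delta_1(S')$ cases are immediate.
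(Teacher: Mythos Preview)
Your proof is correct and follows essentially the same approach as the paper's: reduce all comparable-support cases to inclusions among the $\Delta$-sets, then treat the equal-support case of (b) by a valuation/angular-component case analysis for $\subseteq$ and a lifting argument via Definition~\ref{def:pseudo}(3),(4) for $\supseteq$. The only minor deviation is in that reverse inclusion of the sum: the paper lifts just one summand $b_1$ to $x_1\in A$ and sets $x_2=x-x_1$, reading off $\mylcp(x_2)=b_2$ directly from Definition~\ref{def:pseudo}(4), whereas you lift both $c_i$ to $w_i$ and invoke Lemma~\ref{lem:basic4} to pass from $w_1+w_2$ to $x$ --- a slightly longer but equally valid route; your explicit handling of the case $g_{\min}=e$ is a point the paper leaves implicit.
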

\begin{proof}
	The assertion (1) is obvious from the definition.
	Our next task is to prove assertion (2).
	The proof is similar to that of \cite[Proposition 5.10(2)]{FK}, but we give it for readers' convenience.
	When $S_1 \subseteq S_2$, we have $\Delta_1(S_1) \subseteq \Delta_1(S_2)$.
	The assertion (a) is obvious from this inclusion.
	
	We investigate the intersection and the sum of $\Delta_2(S_1,M_1)$ and $\Delta_2(S_2,M_2)$ discussed in assertion (b).
	When $S_1 \subsetneq S_2$, $\min S_2$ is smaller than any element in $S_1$ by the definition of final segments.
	Hence we obviously have $\Delta_2(S_1,M_1) \subseteq \Delta_2(S_2,M_2)$. 
	The equalities in the assertion (b) are obvious from this inclusion in this case.
	We next consider the case in which $S_1=S_2$.
	Set $g_{\min}=\min S_1$.
	The equalities on the intersection $\Delta_2(S_1,M_1) \cap \Delta_2(S_2,M_2)$ are not hard to derive.
	We omit the details.
	For the sum $\Delta_2(S_1,M_1)+\Delta_2(S_2,M_2)$, we first demonstrate the inclusion $\Delta_2(S_1,M_1)+\Delta_2(S_2,M_2) \subseteq \Delta_2(S_1,M_1+M_2)$.
	Take arbitrary elements $x_i \in \Delta_2(S_i,M_i)$ for $i=1,2$.
	We want to demonstrate $x_1+x_2 \in \Delta_2(S_1,M_1+M_2)$.
	It is obvious when at least one of $x_i$ is zero.
	It is also true when $\myval(x_1)\neq \myval(x_2)$.
	We next consider the case in which $\myval(x_1)=\myval(x_2) > g_{\min}$.
	It follows that 
	$\myval(x_1+x_2)\geq\min\{\myval(x_1),\myval(x_2)\}>g_{\min}$. Thus we get $x_1+x_2 \in \Delta_2(S_1,M_1+M_2)$.
	We next consider the case in which $\myval(x_1)=\myval(x_2)=g_{\min}$ and $\mylcp(x_1)+\mylcp(x_2) \neq 0$.
	It immediately follows from Definition \ref{def:pseudo}(4).
	In the remaining case, we have $\myval(x_1)=\myval(x_2)=g_{\min}$ and $\mylcp(x_1)+\mylcp(x_2) = 0$.
	We have $\myval(x_1+x_2)>g_{\min}$ by Lemma \ref{lem:lcp_basic2} in this case.
	We also get $x_1+x_2 \in \Delta_2(S_1,M_1+M_2)$.
	
	We next prove the opposite inclusion $\Delta_2(S_1,M_1+M_2) \subseteq \Delta_2(S_1,M_1)+\Delta_2(S_2,M_2)$.
	Take $x \in \Delta_2(S_1,M_1+M_2)$.
	When $\myval(x) > g_{\min}$, we obviously have $x \in \Delta_2(S_1,M_1)$.
	When $\myval(x)=g_{\min}$ and $\mylcp(x) \in M_i$ for some $i=1,2$, we obviously have $x \in \Delta_2(S_i,M_i)$.
	The final case is the case in which $\myval(x)=g_{\min}$ and $\mylcp(x) \not\in M_i$ for $i=1,2$.
	We can take nonzero $b_i \in M_i$ with $b_1+b_2 = \mylcp(x)$ in this case.
	We can also take $x_1 \in A$ such that $\myval(x_1) = g_{\min}$ and $\mylcp(x_1)=b_1$ by Definition \ref{def:pseudo}(3).
	The element $x_1$ belongs to $\Delta_2(S_1,M_1)$.
	Set $x_2 = x - x_1\neq 0$.
	We have $\myval(x_2)=g_{\min}$ and $\mylcp(x_2)=b_2$ by Definition \ref{def:pseudo}(4).
	It means that $x_2 \in \Delta_2(S_2,M_2)$.
	We have proven the inclusion $\Delta_2(S_1,M_1+M_2) \subseteq \Delta_2(S_1,M_1)+\Delta_2(S_2,M_2)$ and finished the proof of the assertion (b).
	
	The assertion (c) is easily seen because we have $\Delta_2(S_1, M_1) \subseteq \Delta_1(S_2)$ when $S_1 \subseteq S_2$ and the opposite inclusion holds true otherwise
	by the definition of final segments, $\Delta_2(S_1, M_1)$ and $\Delta_1(S_2)$.
\end{proof}

Recall that $\mathfrak X_{R}^N$
denotes the set of all the quasi-quadratic $R$-modules in an $R$-module $N$.
We set as follows:
\begin{align*}
	\mathcal O_{F_0,F}
	&=
	(\mathfrak{S}\setminus\mathfrak{S}_{\min})
	\sqcup \Bigl((\mathfrak{S}_{\min} \setminus \{G_{\geq e}\}) \times (\mathfrak{X}_{F_0}^F\setminus\{0\})\Bigl)
	\sqcup \Bigl(\{G_{\geq e}\} \times (\mathfrak{X}_{F_0}^{F_0}\setminus\{0\})\Bigl).
\end{align*}
%\begin{eqnarray*}
%	\mathcal O_{F_0,F}&=&
%	(\mathfrak{S}\setminus\mathfrak{S}_{\min})
%	\sqcup \mathfrak{S}_{\min} \times (\mathfrak{X}_{F_0}^F\setminus\{0\}).
%\end{eqnarray*}

\begin{theorem}\label{thm:ring4-ch2}
	The map $\Phi:\mathfrak{X}_A \to \mathcal O_{F,F_0}$ given by
	\begin{align*}
		&\Phi(\mathcal M) 
		= \left\{
		\begin{array}{ll}
			\myval(\mathcal M) & \text{ if } \myval(\mathcal M)\not\in\mathfrak{S}_{\min} ,\\
			(\myval(\mathcal M), M_{g_{\min}}(\mathcal M)) & \text{ if } \myval(\mathcal M)\in\mathfrak{S}_{\min}\\
		\end{array}
		\right.
	\end{align*}
	is a bijection, where $g_{\min}$ denotes a smallest element in $\myval(\mathcal M) $.
\end{theorem}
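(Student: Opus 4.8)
The plan is to produce an explicit two-sided inverse of $\Phi$. Define $\Psi\colon\mathcal O_{F_0,F}\to\mathfrak X_A$ by $\Psi(S)=\Delta_1(S)$ for $S\in\mathfrak S\setminus\mathfrak S_{\min}$, by $\Psi(S,M)=\Delta_2(S,M)$ for $(S,M)\in(\mathfrak S_{\min}\setminus\{G_{\geq e}\})\times(\mathfrak X_{F_0}^F\setminus\{0\})$, and by $\Psi(G_{\geq e},M)=\Delta_2(G_{\geq e},M)$ for $(G_{\geq e},M)\in\{G_{\geq e}\}\times(\mathfrak X_{F_0}^{F_0}\setminus\{0\})$. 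Since $\mathfrak X_{F_0}^{F_0}\subseteq\mathfrak X_{F_0}^F$, in every case the second argument is a nonzero quasi-quadratic $F_0$-module in $F$, so these expressions are legitimate and Lemma \ref{lem:char_two_qq} guarantees $\Psi$ takes values in $\mathfrak X_A$. It then suffices to check that $\Phi$ is well defined and that $\Psi\circ\Phi$ and $\Phi\circ\Psi$ are the identity maps.

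For well-definedness of $\Phi$: by Theorem \ref{thm:str_chartwo}(1), $\myval(\mathcal M)$ is a final segment. When $\myval(\mathcal M)\in\mathfrak S_{\min}$ with smallest element $g_{\min}$, Proposition \ref{prop:M} shows $M_{g_{\min}}(\mathcal M)$ is a quasi-quadratic $F_0$-module in $F$, and it is nonzero because any $x\in\mathcal M\setminus\{0\}$ with $\myval(x)=g_{\min}$ contributes $\mylcp(x)\in M_{g_{\min}}(\mathcal M)\setminus\{0\}$. The point to observe is that a final segment of $G_{\geq e}$ containing $e$ must equal $G_{\geq e}$; hence $g_{\min}=e$ exactly when $\myval(\mathcal M)=G_{\geq e}$, and in that case $\mylcp(x)=\pi(x)=\pi_0(x)\in F_0$ for every $x\in\mathcal M\setminus\{0\}$ with $\myval(x)=e$ (by Definition \ref{def:pseudo}(1) and the fact that $\pi_0=\pi|_A$), so $M_e(\mathcal M)\in\mathfrak X_{F_0}^{F_0}\setminus\{0\}$ and $\Phi(\mathcal M)$ lands in the third summand of $\mathcal O_{F_0,F}$, whereas for $g_{\min}\neq e$ it lands in the second. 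The identity $\Psi\circ\Phi=\mathrm{id}$ is then immediate from the structure theorem: Theorem \ref{thm:str_chartwo}(2) says precisely that $\mathcal M=\Delta_1(\myval(\mathcal M))$ when $\myval(\mathcal M)\notin\mathfrak S_{\min}$ and $\mathcal M=\Delta_2(\myval(\mathcal M),M_{g_{\min}}(\mathcal M))$ otherwise, i.e. $\mathcal M=\Psi(\Phi(\mathcal M))$.

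For $\Phi\circ\Psi=\mathrm{id}$ the argument would run as follows. First, $\myval(\Delta_1(S))=S$ for every $S\in\mathfrak S$: the inclusion $\subseteq$ is trivial, and for $\supseteq$ one uses surjectivity of $\myval$ to pick, for each $g\in S$, some $x\in K^\times$ with $\myval(x)=g$; when $S\notin\mathfrak S_{\min}$ we have $e\notin S$, so $g>e$, hence $x\in\mathfrak m\subseteq A$ and $x\in\Delta_1(S)$. Thus $\Phi(\Delta_1(S))=S$ for $S\in\mathfrak S\setminus\mathfrak S_{\min}$. Next, for $S\in\mathfrak S_{\min}$ with $g_{\min}=\min S$ and a nonzero quasi-quadratic $F_0$-module $M$ in $F$, one shows $\myval(\Delta_2(S,M))=S$ and $M_{g_{\min}}(\Delta_2(S,M))=M$; for both, given a nonzero $c\in M$ one invokes Definition \ref{def:pseudo}(3) to obtain $w\in K$ with $\myval(w)=g_{\min}$ and $\mylcp(w)=c$, and then Lemma \ref{lem:basic} (noting $\pi(w)=c\in F_0$ when $g_{\min}=e$, since then $c\in F_0^\times$, and $w\in\mathfrak m$ automatically when $g_{\min}>e$) to place $w$ in $A$, hence in $\Delta_2(S,M)$. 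Reading off $\Phi(\Delta_2(S,M))$ then returns $(S,M)$, both in the case $g_{\min}\neq e$ (second summand) and in the case $S=G_{\geq e}$, $g_{\min}=e$ (third summand). I expect the main obstacle to be not any single hard step but the bookkeeping forced by the threefold partition of $\mathcal O_{F_0,F}$: keeping the boundary $g_{\min}=e$ aligned on both sides (where the relevant factor is $\mathfrak X_{F_0}^{F_0}$ rather than $\mathfrak X_{F_0}^F$) and making sure that each witnessing element produced by Definition \ref{def:pseudo}(3) actually lies in $A$ and not merely in $B$.
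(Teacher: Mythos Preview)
Your proposal is correct and follows essentially the same route as the paper: define the explicit inverse $\Psi$ via $\Delta_1$ and $\Delta_2$, verify well-definedness of $\Phi$ using Proposition~\ref{prop:M} and the fact that $M_e(\mathcal M)\subseteq F_0$, deduce $\Psi\circ\Phi=\mathrm{id}$ directly from Theorem~\ref{thm:str_chartwo}(2), and check $\Phi\circ\Psi=\mathrm{id}$ by producing witnesses via Definition~\ref{def:pseudo}(3) and Lemma~\ref{lem:basic}. Your treatment of the boundary case $g_{\min}=e$ and of why the witnesses lie in $A$ rather than merely in $B$ is slightly more explicit than the paper's, but the argument is the same.
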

\begin{proof}
	The proof is very similar to that of \cite[Theorem 5.11(2)]{FK}.
	We give it for readers' convenience.
	
	The map $\Phi$ is well-defined by Proposition \ref{prop:M} and the inclusion $M_e(\mathcal M) \subseteq \pi(\mathcal M) \subseteq \pi(A) = F_0$.
	By using Lemma \ref{lem:char_two_qq},
	we can define the map $\Psi:\mathcal O_{F_0,F}\rightarrow \mathfrak{X}_A$ by sending
	$S$ to $\Delta_1(S)$ when $S\in\mathfrak{S}\setminus\mathfrak{S}_{\min}$ and by sending
	$(S,M)$ to $\Delta_2(S, M)$ when $(S,M)\in\Bigl((\mathfrak{S}_{\min} \setminus \{G_{\geq e}\}) \times 
	(\mathfrak{X}_{F_0}^F\setminus\{0\})\Bigl)\sqcup \Bigl(\{G_{\geq e}\} \times (\mathfrak{X}_{F_0}^{F_0}\setminus\{0\})\Bigl)$.
	
	For any $S\in\mathfrak{S}\setminus\mathfrak{S}_{\min}$, we obviously have $\Phi(\Psi(S))=S$.
	For any $(S,M)\in\Bigl((\mathfrak{S}_{\min} \setminus \{G_{\geq e}\}) \times (\mathfrak{X}_{F_0}^F\setminus\{0\})\Bigl)
	\sqcup \Bigl(\{G_{\geq e}\} \times (\mathfrak{X}_{F_0}^{F_0}\setminus\{0\})\Bigl)$,
	we get $\myval(\Delta_2(S,M))=S$ by Definition \ref{def:pseudo}(3).
	We next demonstrate that $M_{g_{\min}}(\Delta_2(S,M))=M$.
	Since $M_{g_{\min}}(\Delta_2(S,M))=\{\mylcp(x)\;\vert\;x\in\Delta_2(S,M)\;\text{and}\;\myval(x)=g_{\min}\}\cup\{0\}$, 
	we get $M_{g_{\min}}(\Delta_2(S,M))\subseteq M$. To show the opposite inclusion, take a nonzero element $c\in M$. By Definition 
	\ref{def:pseudo}(3), there exists an element $x\in K$ with $\mylcp(x)=c$ and $\myval(x)=g_{\min}$.
	When $g_{\min}>e$, it follows that $x\in\mathfrak{m}\subset A$. When $g_{\min}=e$ and $M\in\mathfrak{X}_{F_0}^{F_0}$,
	we have $x\in A$ from Lemma \ref{lem:basic}.
	These imply that $c\in M_{g_{\min}}(\Delta_2(S,M))$.
	Hence we have 
	\begin{align*}
	\Phi(\Psi(S,M))&=(\myval(\Delta_2(S,M)),M_{g_{\min}}(\Delta_2(S,M)))
	=(S,M).
	\end{align*}
	
	Take a quasi-quadratic module $\mathcal M\in\mathfrak{X}_A$.
	We next demonstrate that $\Psi(\Phi(\mathcal M))=\mathcal M$.
	When $\myval(\mathcal{M})\not\in\mathfrak{S}_{\min}$, 
	we have
	$$
	\Psi(\Phi(\mathcal M))=\Psi(\myval(\mathcal M))=\Delta_1(\myval(\mathcal M))=\mathcal M
	$$
	from Theorem \ref{thm:str_chartwo}(2).
	When $\myval(\mathcal M)\in\mathfrak{S}_{\min}$, it follows from Theorem \ref{thm:str_chartwo}(2)
	that
	\begin{align*}
	\Psi(\Phi(\mathcal M)) &=\Psi(\myval(\mathcal M),M_{g_{\min}}(\mathcal M))\\
	&=
	\Delta_2(\myval(\mathcal M),M_{g_{\min}}(\mathcal M))\\
	&=\mathcal M.
	\end{align*}
	We have finished to prove that $\Phi$ and $\Psi$ are the inverses of the others.
\end{proof}

\bibliography{sn-bibliography}% common bib file
%% if required, the content of .bbl file can be included here once bbl is generated
%%\input sn-article.bbl

%% Default %%
%\input qqmodule_PMH-bib.tex%

\end{document}